\tikzset{->-/.style={decoration={markings,mark=at position #1 with {\arrow{>}}},postaction={decorate}}}
\newcommand{\ranktworatio}{1.5}
\newenvironment{tikzrankzero}[2]{\begin{scope}[xshift=#1,yshift=#2] \def \aspectratiox {1} \def \aspectratioy {1} \path[name path=graph] (0,0) circle (0.001);\fill (0,0) circle (0.01);}{\end{scope}}
\newenvironment{tikzrankone}[2]{\begin{scope}[xshift=#1,yshift=#2] \def \aspectratiox {1} \def \aspectratioy {1} \draw[name path=graph] (0,0) circle (1);}{\end{scope}}
\newenvironment{tikzranktwo}[2]{\begin{scope}[xshift=#1,yshift=#2] \def \aspectratiox {\ranktworatio} \def \aspectratioy {1} \draw[name path=graph] (0,0) ellipse (1.5 and 1); \draw (0,-1) to (0,1); }{\end{scope}}
\newenvironment{tikzranktwov}[2]{\begin{scope}[xshift=#1,yshift=#2] \def \aspectratiox {1} \def \aspectratioy {\ranktworatio} \draw[name path=graph] (0,0) ellipse (1 and 1.5); \draw (-1,0) to (1,0); }{\end{scope}}
\newcommand{\tikzhair}[2]{{ \path[name path=line] let \p0 = (#2:1.5), \n1= {\aspectratiox * \aspectratiox * \x0 }, \n2= {\aspectratioy * \aspectratioy * \y0 } in  (0,0) to ( \n1,\n2 ); \path[name intersections={of=line and graph, by=#1base}]; \coordinate (#1) at ($(#1base) + (#2:0.5)$); \draw (#1base) to (#1); } }
\newcommand{\tikznohair}[2]{{ \path[name path=line] let \p0 = (#2:1.5), \n1= {\aspectratiox * \aspectratiox * \x0 }, \n2= {\aspectratioy * \aspectratioy * \y0 } in  (0,0) to ( \n1,\n2 ); \path[name intersections={of=line and graph, by=#1}]; } }
\definecolor{red}{rgb}{1,0,0} 
 \definecolor{darkgreen}{rgb}{0, .7, 0}
 \definecolor{purple}{rgb}{.7, 0, 1}
\newcommand{\lplus}{{\mathfrak{h}}}        
\newcommand{\F}{{\mathds{k}}}
\newcommand{\Z}{{\mathbb{Z}}}
\newcommand{\Q}{{\mathbb{Q}}}
\newcommand{\C}{{\mathbb{C}}}
\newcommand{\hairy}{\mathcal H}  
\newcommand{\ext}{\bigwedge\nolimits}
\newcommand{\h}{{\rm H}} 
\newcommand{\hone}{{\hat\h}} 
\newcommand{\X}{{\rm X}} 
\newcommand{\Xone}{{\hat\X}} 
\newcommand{\tV}[2]{{ {#1}^{\otimes {#2}}}} 
\newcommand{\tH}[1]{{ {\h}^{\otimes {#1}}}} 
\newcommand{\wV}[2]{{ {#1}^{\wedge {#2}}}} 
\newcommand{\wH}[1]{ {\h}^{{\wedge {#1}}}} 
\newcommand{\vcd}{{\textsc{vcd}}}
\newcommand{\bdry}{\partial}
\DeclareMathOperator{\Sym}{Sym} 
\DeclareMathOperator{\SG}{{\mathfrak S}}  
\newcommand{\GL}{{\mathrm{GL}}}  
\newcommand{\SL}{{\mathrm{SL}}}  
\newcommand{\SP}{{\mathrm{Sp}}}  
\newcommand{\Aut}{{\mathrm{Aut}}}  
\newcommand{\Out}{{\mathrm{Out}}}  
\newcommand{\Ind}{{\mathrm{Ind}}}
\newcommand{\Res}{{\mathrm{Res}}}
\newcommand{\sign}{{\mathrm{sign}}}
\newcommand{\alt}{{\mathrm{alt}}}
\newcommand{\x}{{\bf x}}
\newcommand{\y}{{\bf y}}
\newcommand{\SF}[1]{{\mathbb S}_{#1}}  
\newcommand{\Gr}{X}             
\newcommand{\Outn}{\Out(F_n)}
\newcommand{\Autn}{\Aut(F_n)}
\newcommand{\incl}{\hookrightarrow}
\newcommand{\iso}{\cong}
\newtheorem{proposition}{Proposition}[section]
\newtheorem{theorem}[proposition]{Theorem}
\newtheorem*{theorem*}{Theorem}
\newtheorem{lemma}[proposition]{Lemma}
\newtheorem*{claim}{Claim}
 \newtheorem*{notation}{Notation}
\newtheorem{corollary}[proposition]{Corollary}
\newtheorem{conjecture}[proposition]{Conjecture}
\newtheorem{question}[proposition]{Question}
\newtheorem{example}[proposition]{Example}
\theoremstyle{remark}
\newtheorem{remark}[proposition]{Remark}
\newtheoremstyle{red}{3pt}{3pt}{\color{red}}{}{\itshape}{.}{.5em}{}
\theoremstyle{red}
\title{Assembling homology classes in\\ automorphism groups of free groups}
\author{James Conant}
\author{Allen Hatcher}
\author{Martin Kassabov}
\author{Karen Vogtmann}
\begin{document}

\begin{abstract}
The observation that a graph of rank $n$ can be assembled from graphs of smaller rank $k$ with $s$ leaves by pairing the leaves together  leads to a process for assembling homology classes for $\Out(F_n)$ and $\Aut(F_n)$ from classes for groups $\Gamma_{k,s}$, where the $\Gamma_{k,s}$ generalize $\Out(F_k)=\Gamma_{k,0}$ and $\Aut(F_k)=\Gamma_{k,1}$.  The symmetric group $\SG_s$ acts on $H_*(\Gamma_{k,s})$ by permuting leaves, and for trivial rational coefficients we compute the $\SG_s$-module structure on $H_*(\Gamma_{k,s})$ completely for $k \leq 2$.  Assembling these classes then produces all the known  nontrivial rational homology classes for $\Aut(F_n)$ and $\Out(F_n)$ with the possible exception of classes for $n=7$ recently discovered by L. Bartholdi.  It also produces an enormous number of candidates for other nontrivial classes, some old and some new, but we limit the number of these which can be nontrivial using the representation theory of symmetric groups.  We  gain new insight into some of the most promising candidates by finding small subgroups of $\Aut(F_n)$ and $\Out(F_n)$ which support them
 and by finding geometric representations for the candidate classes as maps of 
 closed manifolds 
 into the moduli space of graphs.  Finally, our results have implications for the homology of the Lie algebra of symplectic derivations.
\end{abstract}

\maketitle

\section*{Introduction}
In this paper we develop a new approach to studying the homology of automorphism groups of free groups which gives fresh group theoretic and geometric insight into known families of homology classes, and also helps direct the search for new classes.  We restrict attention to homology and cohomology with untwisted coefficients in a field $\F$ of characteristic zero unless explicitly specified otherwise. 

Let us recall briefly what is known about these homology groups.  First of all, $H_i\big(\Autn\big)$ and $H_i\big(\Outn\big)$ are finite-dimensional over $\F$ for all $i$, and vanish for $i$ greater than the virtual cohomological dimension ($\vcd$), which is $2n-2$ for $\Autn$ and $2n-3$ for $\Outn$~\cite{CV}.  The groups $H_i\big(\Autn\big)$ and $H_i\big(\Outn\big)$ are independent of $n$ for $n\ge 5(i+1)/4$ as shown in~\cite{HV2,HV}, and these stable groups are in fact zero as well, as Galatius proved in~\cite{Gal}.  Thus in the first quadrant of the $(i,n)$ plane (see Figure~\ref{fig:introchart} below) there is a wedge-shaped region bounded by lines of slope $1/2$ and $5/4$ that contains all the nonzero groups $H_i\big(\Autn\big)$, and similarly for $H_i\big(\Outn\big)$.  There are only a small number of these groups which are explicitly known to be nonzero.  For $\Autn$ these occur for $(i,n)=(4,4)$, $(7,5)$, $(8,6)$,  $(8,7),$ $(11,7)$ and $(12,8)$;  for $\Outn$ the list is the same  except that $(7,5)$ is omitted.  (The natural map $H_i\big(\Autn\big)\to H_i\big(\Outn\big)$ is known to be surjective for all $i$ and $n$~\cite{Kawazumi} and we give a different proof of this in Theorem~\ref{thm:injection}.) These low-dimensional calculations are done mostly by computer; see~\cite{HV2,Morita,CVMorita,Ger,Gray,Bartholdi}.
Complete calculations of $H_i\big(\Autn\big)$ have been made only for $n\leq 5$ and for $H_i\big(\Outn\big)$ only for  $n\leq 7$.

\begin{figure}
\begin{center}
\begin{tikzpicture}[scale=.45] 
\newcommand\Mbox[2]{\draw (#1-.2,#2-.2) to (#1-.2,#2+.2) to (#1+.2,#2+.2) to (#1+.2,#2-.2) to (#1-.2,#2-.2)}
\newcommand\Mbbox[2]{\draw [fill=black] (#1-.2,#2-.2) to (#1-.2,#2+.2) to (#1+.2,#2+.2) to (#1+.2,#2-.2) to 
(#1-.2,#2-.2)}
\newcommand\ylabel[1]{\node (n#1) at (-.75,#1) {$\scriptstyle{#1}$}}
\newcommand\xlabel[1]{\node (i#1) at (#1,-.75) {$\scriptstyle{#1}$}}
\newcommand\xrule[1]{\draw [color=lightgray] (#1,0) -- (#1,15);}
\newcommand\yrule[1]{\draw [color=lightgray] (0,#1) -- (25,#1);}
 \draw (0,0)-- (25,0);
  \draw (0,0) -- (0,15);
 \fill [black!10] (0,1) to  (0,1.25) to (11,15) to (25,15) to (25,13.5) to (0,1);
\foreach \i in {1,...,24} {\xrule{\i};} 
\foreach \i in {1,...,14} {\yrule{\i};} 
\draw [dotted] (0,1.25) to (11,15);
\draw (0,1) to (25,13.5);
  \Mbbox {4}{4};
    \Mbbox {8}{6};
      \Mbbox {12}{8};
        \Mbox {16}{10};
          \Mbox {20}{12};
            \Mbox {24}{14};
 \draw  [fill=black]  (7,5) circle (.25);
  \begin{scope} [xshift=8cm, yshift=7cm]
  \fill [black!50] (0:.37) \foreach \x in {60,120,...,359} {
                -- (\x:.37)
            }-- cycle (90:.37); 
 \end{scope}
\begin{scope} [xshift=11cm, yshift=7cm]
\fill [black!50] (0:.4) \foreach \x in {60,120,...,359} {
                -- (\x:.4)
            }-- cycle (90:.4); 
 \end{scope}
   \fill [black] (11,7) circle (.2);
     \draw  (15,9) circle (.25);
       \draw (19,11) circle (.25);
         \draw (23,13) circle (.25);
\node (i) at (25.6,-.75) {$i$};
\node (n) at (-.75,15.5) {$n$};
\foreach \i in {1,...,24} {\xlabel{\i};}
\foreach \i in {1,...,14} {\ylabel{\i};} 
\fill [white] (12.3,3.2) -- (12.3,4.8) -- (23.8,4.8)--(23.8,3.2)--(20.6,3.2)--(20.6,2.3)--(15.5,2.3)--(15.5,3.2)--cycle;
\fill [white] (.6, 13.75) -- (7.5,13.75)--(7.5, 12)--(6,10.2)--(2,10.2)--(.6,12.)--cycle;
\node (vcd) at (18,4) {$H_i\big(\Aut(F_n)\big)=0$ above {\sc vcd}};
\node (vcdrange) at (18,3) {$i>2n-2$};
\node (vcd) at (4,13) {$H_i\big(\Aut(F_n)\big)=0$};
\node (vcd2) at (4,12) {in stable range};
\node (srange) at (4,11) {$n\geq \frac{5(i+1)}{4}$};
\end{tikzpicture}
\end{center}
\caption{Classes in the homology of $Aut(F_n)$ for $n\leq 14$.  The Morita classes  are shown as squares  and the Eisenstein classes are shown as circles, filled in if the classes are known to be nontrivial.  The nontrivial classes recently found by Bartholdi are shown as hexagons.  
}
\label{fig:introchart}
\end{figure}

 There are two potentially infinite sequences which begin with nontrivial classes: these are classes $\mu_k$ for $(i,n)=(4k,2k+2)$ defined by Morita~\cite{Morita} and classes $\mathcal E_k$ for $(i,n)=(4k+3,2k+3)$ constructed in~\cite{CKV1}.  The latter are known as \emph{Eisenstein classes\/} because they arise from Eisenstein series via the connection between modular forms and the cohomology of $\SL_2(\Z)$ established by the Eichler-Shimura isomorphism.  The Morita classes $\mu_k$ are defined for both $\Aut$ and $\Out$, while the $\mathcal E_k$'s are defined for $\Aut$ and map to zero in $\Out$.  Note that these classes are all either one or two dimensions below the $\vcd$.

One of the big open questions is to determine which of the classes $\mu_k$ and $\mathcal E_k$ are nonzero.  However, even if they are nonzero it seems that they account for only a small fraction of the homology.  The Euler characteristic for $H_*\big(\Outn\big)$ was computed for $n\le 11$ by Morita, Sakasai, and Suzuki in~\cite{MSS1,MSS2}, and after starting with the values $1$ and $2$ for $n\le 8$, it becomes $-21,-124,-1202$ for $n=9,10,11$.  If this trend continues for larger $n$, it would say there are many odd-dimensional classes for $\Outn$,  though the only one discovered to date is the $11$-dimensional class in $\Out(F_7)$ recently found by Bartholdi \cite{Bartholdi}.  (This class is balanced by a single $8$-dimensional class, consistent with the Euler characteristic calculation for $n=7$.)
  

\begin{figure}[h]
\begin{center}
\begin{tikzpicture}[xscale=.55, yscale=.5] 
\newcommand\vbar[1]{\draw   (#1,-1) -- (#1,1);}
\draw (0,0) -- (25,0);
\foreach \i in {2,...,10} {\vbar{2*\i};} 
\vbar{22.5};
\node (n) at (2,.5) {$n$};
\foreach \j in {3,...,10} {\node () at (2*\j-1,.5) {$\j$};}
\node () at (21.25,.5) {$11$};
\node () at (23.5,.5){$12$};
\node (chi) at (2,-.6) {$\chi(\Out(F_n))$};
\node (chi3) at (5,-.6) {$1$};
\node (chi4) at (7,-.6) {$2$};
\node (chi4) at (9,-.6) {$1$};
\node (chi4) at (11,-.6) {$2$};
\node (chi4) at (13,-.6) {$1$};
\node (chi4) at (15,-.6) {$1$};
\node (chi4) at (17,-.6) {$-21$};
\node (chi4) at (19,-.6) {$-124$};
\node (chi4) at (21.1,-.6) {$-1202$};
\node (chi4) at (23.5,-.6) {$?$};
\end{tikzpicture}
\end{center}
\caption{Euler characteristic of $Out(F_n)$}\label{fig:eulertable}
\end{figure}

The Morita classes $\mu_k$ were first constructed using Lie algebra techniques underpinned by Kontsevich's ``formal noncommutative symplectic geometry"~\cite{Ko2,Ko1}. In~\cite{CVMorita} these classes were interpreted explicitly in Lie graph cohomology and generalized;  further generalizations including the classes $\mathcal E_k$ were obtained using ``hairy graph homology" in~\cite{CKV1}.  In the present paper we show how to construct  all of these classes in an elementary fashion which bypasses both graph homology and Kontsevich's work. The idea is to use the fact that $\Out(F_n)$ and $\Autn$ are the first two groups in a series $\Gamma_{n,0},  \Gamma_{n,1},  \Gamma_{n,2}, \cdots$ where $\Gamma_{n,s}$ is the group of homotopy classes of self-homotopy equivalences of a rank $n$ graph fixing $s$ leaves (vertices of valence one)~\cite{BF, Hat95, HV}.
These groups are related by natural surjective homomorphisms $\Gamma_{n,s}\to \Gamma_{n,s-k}$ with kernel $(F_n)^k$. These homomorphisms split for $k<s$ but not for $k=s$.

The groups $\Gamma_{n,s}$ are of interest because by gluing graphs together along a subset of their leaf vertices we obtain many homomorphisms $\Gamma_{n_1,s_1}\times\cdots\times \Gamma_{n_k,s_k}\to \Gamma_{n,s}$.  On the level of homology, each such map induces a homomorphism
$$
H_*(\Gamma_{n_1,s_1})\otimes\cdots\otimes H_*(\Gamma_{n_k,s_k})\longrightarrow H_*(\Gamma_{n,s}),
$$
which we call an {\em assembly map}
(see Section~\ref{sec:gluing}).
For example by pairing up all of the leaves of two rank one graphs with $s$ leaves (in any way) we obtain  an assembly map
$$
H_*(\Gamma_{1,s})\otimes H_*(\Gamma_{1,s})\longrightarrow H_*(\Gamma_{s+1,0})=H_*\big(\Out(F_{s+1})\big).
$$
Restricting to the case that $s$ is odd, say $s=2k+1$, it is easy to calculate that  $H_{2k}(\Gamma_{1,2k+1})\iso \mathbb \F$ (see Section~\ref{subsec:rankone}),
and in Section~\ref{subsec:Morita} we note that the Morita class $\mu_{k}$ is the image of $\alpha_k\otimes \alpha_k$ under this assembly map, where  $\alpha_{k}$ is a generator of $H_{2k}(\Gamma_{1,2k+1})$.
This graphical interpretation of the original Morita series allows us to give two new proofs that Morita classes vanish after one stabilization, one proof being algebraic (Section~\ref{subsec:stab_n}) and the other geometric (Section~\ref{sec:geometric}).  This result was first proved via a more elaborate combinatorial computation in graph homology in~\cite{Stable}.

As a consequence of the elementary construction, we find that all the classes $\mu_k$ in Morita's original series, as well as the generalized Morita classes given in~\cite{CVMorita}, are supported on abelian subgroups of $\Autn$. This naturally gives rise to the question of whether the standard maximal abelian subgroup can support nontrivial homology classes, and we show in Section~\ref{subsec:maximal} that it cannot.
For the Eisenstein classes we find slightly more complicated nonabelian subgroups that support them.

Parallel to these group-theoretic descriptions of Morita and Eisenstein classes there are geometric representations as maps of closed orientable manifolds into the classifying spaces for $\Aut(F_n)$ or $\Out(F_n)$ carrying top-dimensional homology classes of the manifolds to the Morita or Eisenstein classes. In the case of Morita classes the manifolds are tori while for the Eisenstein classes they are products of a certain $3$-manifold with tori.

The computational heart of the paper is in Section~\ref{sec:cohomology} where we use the natural action of the symmetric group $\SG_s$ on $\Gamma_{n,s}$ to study $H_*(\Gamma_{n,s})$. For $n=1$ and $n=2$ we determine the $\SG_s$-module structure of $H_*(\Gamma_{n,s})$ completely.  This can be applied in the search for nontrivial classes in $ H_*(\Gamma_{n,s})$ that lie in the images of assembly maps.  In particular we show in Section~\ref{subsec:genMorita} that many of the generalized Morita classes are in fact zero, though this does not apply to the $\mu_k$'s themselves. In Section~\ref{subsec:odd} we show that certain odd-dimensional classes constructed in~\cite{MSS1} must vanish, but we also find some  new  candidates for nontrivial odd-dimensional classes. 

The calculation of  $H_*(\Gamma_{1,s})$  is an easy consequence of the fact that  $\Gamma_{1,s}\iso \Z_2 \ltimes \Z^{s-1}$.  To calculate the homology of $\Gamma_{2,s}$ we use the short exact sequence
$$
1 \longrightarrow F_2^{\,s} \longrightarrow \Gamma_{2,s} \longrightarrow \Gamma_{2,0}=\Out(F_2)\longrightarrow 1.
$$
In the Leray-Serre spectral sequence associated to this short exact sequence we note that all differentials are zero,
allowing us to completely calculate the homology.  (Actually, for convenience we use cohomology rather than homology for spectral sequence arguments and indeed for most algebraic calculations.)
The results of our computations for $n=1$ and $n=2$ and small values of $s$ are recorded in several tables at the end of the paper.

These computations show that even though the dimension of $H_i(\Gamma_{n,s})$ as a vector space over $\F$ increases with $s$ for fixed $n=1,2$, it is nevertheless true that as representations of $\SG_s$ these vector spaces eventually stabilize in the sense of~\cite{CF}.  This representation stability holds for all $n$ in fact, as a corollary of a result of Jim\'enez Rolland~\cite{JR}; see Proposition~\ref{prop:repstab}.

We also apply some elementary representation theory to show that the group $H_i(\Gamma_{n,s})$ is nontrivial whenever $i$ is an even multiple of $n$ and $s$ is sufficiently large with respect to $i$ and $n$.  This can be contrasted with the situation for stabilization with respect to $n$, where $H_i(\Gamma_{n,s})$ becomes trivial as $n$ increases, by Galatius' theorem for $s=1$ and hence for all $s$ since the groups $H_i(\Gamma_{n,s})$ are independent of both $n$ and $s$ when $n\ge 2i+4$ by~\cite{HV}.

In Section~\ref{sec:HairyLie} we point out the relationship of the homology of $\Gamma_{n,s}$ with both hairy graph homology~\cite{CKV1, CKV2} and the cohomology of the  Lie algebra of symplectic derivations, as studied by Kontsevich, Morita and many others.  In particular, we show how our computations for $\Gamma_{n,s}$ imply that the cohomology in each dimension of this Lie algebra contains infinitely many simple $\SP$-modules.

 Section~\ref{sec:conj} contains some conjectures and open questions. 
 Most nontrivial rational homology classes for any $\Gamma_{n,s}$ which occur below the $\vcd$~$2n-3+s$ have been shown to be in the image of assembly maps.  The only  exceptions are new classes for $\Out(F_7)$ and $\Aut(F_7)$  recently found by Bartholdi \cite{Bartholdi}, for which this is still unclear.   
 It is then natural to ask whether assembly maps generate all classes below the $\vcd$. The number of potential homology classes for $H_i(\Gamma_{n,s})$ constructed from assembly maps grows exponentially with $n$, leading to the expectation that the rank  of the homology also grows very fast.
For  $s=0$ this expectation coincides nicely with
the Euler characteristic calculations of Morita, Sakasai, and Suzuki cited earlier.
 
 Finally, we remark that the rational classifying spaces for the groups $\Gamma_{n,s}$  have natural compactifications, whose homology has recently been studied by Chan,  Galatius and  Payne (\cite{Chan,CGP}).  One thing they show is that this homology vanishes in dimensions less than $s-3$.  It is easy to see that all homology classes which are in the image of  an assembly map must vanish in this compactification, consistent with their calculations.

{\bf Acknowledgements:}
Martin Kassabov was partially supported by  Simons Foundation grant 305181 and NSF grants DMS 0900932 and 1303117.
Karen Vogtmann was partially supported by NSF grant DMS 1011857 and a Royal Society Wolfson Award.

\section{The groups $\Gamma_{n,s}$}
\label{sec:groups}

\subsection{Definitions}
\label{subsect:Gns}

The group $\Outn$ is the group of homotopy classes of self-homotopy equivalences of a finite connected graph $X$ of rank $n$, and $\Autn$ is the basepointed version of this, the homotopy classes of homotopy equivalences of $X$ fixing a basepoint, where homotopies are also required to fix the basepoint.  A natural generalization is to choose $s$ distinct marked points $x_1,\dots,x_s$ in $X$ and then define $\Gamma_{n,s}$ to be the group of homotopy classes of self-homotopy equivalences of $X$ fixing each $x_i$, with homotopies also required to fix these points.  The group operation in $\Gamma_{n,s}$ is induced by composition of homotopy equivalences, which is obviously associative with an identity element.  To check that inverses exist one uses the following elementary fact:

\begin{lemma}
\label{lem:homeq}
If $f : X\to Y$ is a homotopy equivalence of finite connected graphs taking a set of $s$ marked points $\x=\{x_1,\dots,x_s\}$ bijectively to another such set $\y=\{y_1,\dots,y_s\}$, then $f$ is a homotopy equivalence of pairs $(X,\x)\to(Y,\y)$, so there is a map $g : Y\to X$ restricting to $f^{-1}$ on $\y$ with the compositions $gf$ and $fg$  homotopic to the identity fixing $\x$ and $\y$ respectively.
\end{lemma}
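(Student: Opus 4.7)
The plan is to bootstrap from an arbitrary homotopy inverse of $f$ using the homotopy extension property (HEP) for the CW pair $(Y,\y)$, which we may invoke after subdividing $Y$ so that each $y_i$ is a vertex (and likewise for $(X,\x)$).

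First I would pick any homotopy inverse $g_0:Y\to X$, together with homotopies $H:\mathrm{id}_X\simeq g_0 f$ and $K:\mathrm{id}_Y\simeq fg_0$. The track $t\mapsto H(x_i,t)$ is a path in $X$ from $x_i$ to $g_0f(x_i)=g_0(y_i)$; let $\gamma_i$ denote its reverse, running from $g_0(y_i)$ to $x_i$. The map $\y\times I\to X$ defined by $(y_i,t)\mapsto\gamma_i(t)$ agrees with $g_0|_\y$ at $t=0$, and HEP extends it to a homotopy $G_t$ of $g_0$. Setting $g:=G_1$ gives $g(y_i)=x_i$, and since $g\simeq g_0$ the map $g$ is still a homotopy inverse of $f$.

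To see $gf\simeq\mathrm{id}_X$ rel $\x$, I would splice the homotopy $G_{1-s}f$ (from $gf$ to $g_0f$) with the reverse of $H$ (from $g_0 f$ to $\mathrm{id}_X$). The resulting track at each $x_i$ is the concatenation $\bar\gamma_i\cdot\gamma_i$, which is obviously null-homotopic rel $x_i$. A standard application of HEP for $(X,\x)$ then absorbs each of these null-homotopies of the track into the homotopy itself, producing a homotopy $gf\simeq\mathrm{id}_X$ that is constant on $\x$. The analogous construction with the roles of $X,Y$ and $f,g$ swapped (now using the modified $g$ as a starting point instead of $g_0$) yields $fg\simeq\mathrm{id}_Y$ rel $\y$; in particular, at each $y_i$ this forces $fg(y_i)=y_i$, so $g(y_i)=f^{-1}(y_i)=x_i$ as required.

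The main technical point is precisely the move in the previous paragraph: turning a homotopy whose track loops at $\x$ are null-homotopic into one that is actually constant on $\x$. This is a routine but nontrivial HEP maneuver of ``dragging in'' each null-homotopy of the track. A cleaner alternative, which I might present instead, is to cite the classical theorem that a map of CW pairs which is a homotopy equivalence on both the total spaces and the subspaces is in fact a homotopy equivalence of pairs. Here $f|_\x:\x\to\y$ is a bijection of finite discrete sets, so the hypotheses hold; and because $\x$ and $\y$ are discrete, any homotopy through maps of pairs is automatically constant on $\x$ and on $\y$, delivering the rel statements and the identity $g|_\y=f^{-1}|_\y$ at once.
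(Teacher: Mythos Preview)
Your proposal is correct, and the alternative you mention at the end (invoking the classical result that a map of CW pairs which is a homotopy equivalence on each level is a homotopy equivalence of pairs) is the cleanest way to package it. The paper, however, takes a genuinely different and shorter route: it forms the reduced mapping cylinder $Z$ obtained from the ordinary mapping cylinder of $f$ by collapsing each interval $\{x_i\}\times I$ to a point, observes that the inclusions $X\hookrightarrow Z$ and $Y\hookrightarrow Z$ are homotopy equivalences of CW complexes and hence deformation retractions, and reads off $g$ as the restriction to $Y$ of the retraction $Z\to X$. The collapsed intervals force both deformation retractions to be stationary on $\x=\y$, so the rel conditions come for free. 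This avoids all the HEP bookkeeping.

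One small imprecision in your direct argument: when you ``swap roles'' at the end, the analogous construction modifies the homotopy inverse, so it produces some $\tilde f\simeq f$ with $\tilde f g\simeq\mathrm{id}_Y$ rel $\y$, not immediately $fg\simeq\mathrm{id}_Y$ rel $\y$ for the original $f$. This is easily repaired --- once you have $\tilde f g\simeq\mathrm{id}_Y$ rel $\y$ and $gf\simeq\mathrm{id}_X$ rel $\x$, the chain $fg\simeq(\tilde f g)(fg)=\tilde f(gf)g\simeq\tilde f g\simeq\mathrm{id}_Y$ consists entirely of homotopies rel $\y$ --- but as written the sentence skips this step. Your citation-based alternative sidesteps the issue entirely.
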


\begin{proof}
Let $Z$ be the quotient of the mapping cylinder of $f$ obtained by collapsing $\x\times I$ to $\x=\y$. The quotient map collapses a finite number of intervals to a point so it is a homotopy equivalence.  If $f$ is a homotopy equivalence, then the inclusions of $X$ and $Y$ into the mapping cylinder are homotopy equivalences, hence the same is true for the inclusions into $Z$.  It follows that $Z$ deformation retracts onto the copies of $X$ and $Y$ at either end.  The deformation retraction to $X$ gives the map $g$.
\end{proof}

This lemma also shows that $\Gamma_{n,s}$ does not depend on the choice of $(X,\x)$, up to isomorphism.
Throughout most of the paper we will take $X$ to be a rank $n$ graph with exactly $s$ leaves, with the leaf vertices as the marked points.  Here a \emph{leaf\/} means a vertex of valence one together with the adjoining edge. Our generic notation for a graph of rank $n$ with $s$ leaves will be $X_{n,s}$.
Two examples of rank $3$ graphs with $4$ leaves are shown in  Figure~\ref{fig:thornedrose}.
\begin{figure}[h]
\begin{center}
\begin{tikzpicture} [thick, scale=.25]
\draw  (0,0) to [out=130, in=180]  (90:4.1);
\draw  (0,0) to [out=50, in=0]  (90:4.1);
\draw   (0,0) to [out=50, in=100]  (10:4.1);
\draw   (0,0) to [out=-30, in=-80] (10:4.1);
\draw   (0,0) to [out=130, in=80]  (170:4.1);
\draw   (0,0) to [out=210, in=-100] (170:4.1);
\draw (0,0) to [out=210, in = 90] (-2,-3);
\fill[black] (-2,-3) circle (.15);
\draw (0,0) to [out=250, in = 90] (-.5,-3);
\fill[black] (-.5,-3) circle (.15);
\draw (0,0) to [out=290, in = 90] (.5,-3);
\fill[black] (.5,-3) circle (.15);
\draw (0,0) to [out=330, in = 90] (2,-3);
\fill[black] (2,-3) circle (.15);
\begin{scope} [xshift=15cm, yshift = 1cm, scale=1.1]
\draw (0,0) circle (2);
\draw (-4,0) to (4,0);
\fill[black] (-4,0) circle (.15);
\fill[black] (4,0) circle (.15);
\draw (0,0) to (0,2);
\draw (-60:2) to (-60:4);
\fill[black] (-60:4) circle (.15);
\draw (-120:2) to (-120:4);
\fill[black] (-120:4) circle (.15);
\end{scope}
\end{tikzpicture}
\end{center}
\caption{Two possibilities for $\Gr_{3,4}$
}
\label{fig:thornedrose}
\end{figure}
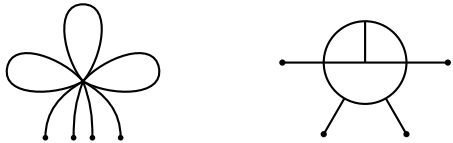

A homotopy equivalence $f\colon X_{n,s}\to X_{n,s}$ that permutes the leaf vertices induces an automorphism of $\Gamma_{n,s}$ via conjugation by $f$.  If $f$ fixes the leaf vertices this is an inner automorphism, hence induces the identity on the homology of $\Gamma_{n,s}$, so there is an induced action of the symmetric group $\SG_s$ on the homology. If we choose $X_{n,s}$ to have a single nonleaf vertex as in the left half of Figure~\ref{fig:thornedrose} then this action of $\SG_s$ on homology comes from the action on $X_{n,s}$ permuting the leaves. The $\SG_s$-action on $H_*(\Gamma_{n,s})$ will play a major role in later sections of the paper.

The groups $\Gamma_{0,s}$ are trivial since any $\Gr_{0,s}$ is a tree and any homotopy equivalence of a tree which fixes all of its leaf vertices is homotopic to the identity by a homotopy fixing the leaf vertices.

As shown in~\cite{BF}, the group   $\Gamma_{1,s}$ is the semidirect product  $\Z_2 \ltimes \Z^{s-1}$.
The free abelian subgroup $\Z^{s-1}$ is generated by homotopy equivalences which wrap one leaf edge around the (unique) loop while fixing the leaf vertex and the rest of the graph. These generators commute since they have disjoint supports. Note that wrapping all of the leaf edges around the loop in the same direction results in a map which is homotopic to the identity fixing the leaf vertices, so there are only $s-1$ independent generators.   The generator of $\Z_2$ flips the loop, so acts on $\Z^{s-1}$ by $x\mapsto -x$.

We remark that $\Gamma_{n,s}$ could also be defined as the mapping class group of the $3$-manifold $M_{n,s}$ formed by removing $s$ disjoint balls from the connected sum of $n$ copies of $S^1\times S^2$, modulo the subgroup generated by Dehn twists along embedded 2-spheres.  This follows from results of Laudenbach and is made explicit in  Proposition~1 of~\cite{HV}.

The groups  $\Gamma_{n,s}$ for $s>1$  were first considered in~\cite{Hat95} in work on homological stability
and also appeared in  Bestvina and Feighn's proof that $\Out(F_n)$ is a virtual duality group~\cite{BF}.  It was observed in~\cite{CKV1} that their homology is very closely related to  hairy graph homology groups for the Lie operad.

\subsection{Short exact sequences}
\label{subsec:sequences}

In this section we observe that there are natural short exact sequences relating the groups $\Gamma_{n,s}$.

\begin{proposition}
\label{thm:extension}\
If $n > 1$ and $k\leq s$ there is a short exact sequence
$$
1\longrightarrow  F_n^{k}\longrightarrow \Gamma_{n,s}\longrightarrow \Gamma_{n,s-k}\longrightarrow 1
$$
which splits if $k<s$. This holds also when $n=1$ and $k < s$, but in the exceptional case $(n,k)=(1,s)$ there is a split short exact sequence
$$
1\longrightarrow \Z^{s-1}\longrightarrow \Gamma_{1,s} \longrightarrow \Gamma_{1,0}\longrightarrow 1
$$
expressing $\Gamma_{1,s}$ as the semidirect product  $\Z_2 \ltimes \Z^{s-1}$.
\end{proposition}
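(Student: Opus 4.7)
The plan is to realize the projection $\Gamma_{n,s} \to \Gamma_{n,s-k}$ as the map induced by forgetting the requirement that the last $k$ marked points be fixed, identify its kernel via point-pushing (equivalently via the evaluation Serre fibration), and produce a splitting when $k < s$ by extending self-homotopy equivalences by the identity on extra leaves. Concretely, model $X_{n,s}$ as $X_{n,s-k}$ with $k$ additional leaves attached near a chosen leaf $x_1 \in \x' := \{x_1,\dots,x_{s-k}\}$; then any self-homotopy equivalence fixing $\x = \{x_1,\dots,x_s\}$ pointwise also fixes $\x'$ pointwise, giving the forgetful homomorphism. To construct the section when $k < s$, represent $[g] \in \Gamma_{n,s-k}$ by a map $g$ that fixes a small closed neighborhood of $x_1$ pointwise---this can always be achieved by a homotopy rel $\x'$ since $x_1$ is a free leaf vertex, so the fixing can be spread along the leaf edge---and extend $g$ by the identity on the $k$ added leaves to produce $\tilde g \in \Gamma_{n,s}$; the resulting $[\tilde g]$ does not depend on the choice of representative, and $[g]\mapsto[\tilde g]$ splits the sequence.

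The heart of the proof is identifying the kernel. An element $[f]$ of the kernel is represented by a self-homotopy equivalence fixing $\x$ together with a homotopy $f_t : \mathrm{id} \simeq f$ that only has to fix $\x'$; such a homotopy traces loops $\gamma_i(t) := f_t(x_{s-k+i})$ at each unfixed marked point, producing a candidate map $\ker \to F_n^k$. Conversely, any tuple $(\gamma_1,\dots,\gamma_k) \in F_n^k$ determines a point-pushing class in the kernel by dragging each $x_{s-k+i}$ once around $\gamma_i$ via a $1$-parameter family of homotopy equivalences fixing $\x'$. Well-definedness and injectivity of this correspondence are precisely what the long exact sequence of the evaluation Serre fibration
\[
\mathrm{hAut}(X_{n,s}, \x) \longrightarrow \mathrm{hAut}(X_{n,s}, \x') \xrightarrow{\,\mathrm{ev}\,} X^k,\qquad f \mapsto (f(x_{s-k+1}),\dots,f(x_s))
\]
delivers: it begins
\[
\pi_1\bigl(\mathrm{hAut}(X_{n,s}, \x'),\mathrm{id}\bigr) \longrightarrow F_n^k \longrightarrow \Gamma_{n,s} \longrightarrow \Gamma_{n,s-k} \longrightarrow 1,
\]
and the leftmost group vanishes whenever $n \ge 2$ or $\x' \ne \emptyset$, because the identity component of $\mathrm{hAut}(X_{n,s}, \x')$ is contractible in those cases (either the fundamental group is centerless or some marked point is pinned down).

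In the exceptional case $(n,k) = (1, s)$, the set $\x'$ is empty and $X = S^1$, so $\pi_1(\mathrm{hAut}(S^1), \mathrm{id}) = \Z$ is generated by the loop of rotations; its image in $\pi_1((S^1)^s) = \Z^s$ is the diagonal, and the kernel becomes $\Z^s / \Z \iso \Z^{s-1}$. To split $\Gamma_{1,s} \to \Out(F_1) = \Z_2$, model $X_{1,s}$ as a circle with all $s$ leaves attached at one vertex $v$; reflection of the circle through $v$ fixes every leaf and represents the nontrivial element of $\Out(F_1)$, giving the splitting, and this reflection acts on $\Z^{s-1}$ by negation since it reverses the loop orientation, recovering $\Gamma_{1,s} \iso \Z_2 \ltimes \Z^{s-1}$. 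The main technical obstacle throughout is justifying the vanishing of $\pi_1(\mathrm{hAut}(X_{n,s}, \x'), \mathrm{id})$ in the non-exceptional cases, equivalently that point-pushing is well-defined: one must verify that any loop of self-homotopy equivalences beginning and ending at the identity and fixing $\x'$ (with $\x'$ nonempty or $n \ge 2$) is nullhomotopic in that function space, which can be handled by a simplicial/combinatorial reduction or by appealing to standard facts about $\mathrm{hAut}$ of finite $K(\pi,1)$'s with centerless fundamental group.
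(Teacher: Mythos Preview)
Your proposal is correct and follows essentially the same route as the paper: the same evaluation fibration $\mathrm{hAut}(X,\x)\to\mathrm{hAut}(X,\x')\to X^k$, the same long exact sequence, the same point-pushing identification of the kernel, and the same extend-by-the-identity splitting near a surviving leaf. The only place the paper is more concrete is the step you flag as the main technical obstacle: for $k<s$ it disposes of $\pi_1(E_{n,s-k})$ by elementary obstruction theory (choose $X$ with the $s-k$ marked points as $0$-cells and use $\pi_2(X)=0$), and for $k=s$ it runs the fibration $E_{n,1}\to E_{n,0}\to X$ to identify the relevant map with $F_n\to\Aut(F_n)$ by inner automorphisms, whose kernel is the center of $F_n$.
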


For $k=s-1$  the proposition follows from~\cite{BF}, section 2.5, where it is shown that $\Gamma_{n,s}\iso \Autn \ltimes F_n^{s-1}$.

\begin{proof}
Let $X$ be a rank $n$ graph containing a set $\x=\{x_1,\dots,x_s\}$ of $s$ distinct marked points.  Let $E_{n,s}$ be the space of homotopy equivalences $X\to X$ fixed on $\x$, so $\Gamma_{n,s}=\pi_0(E_{n,s})$.  For $k\leq s$ there is an inclusion $E_{n,s} \subset E_{n,s-k}$ obtained by no longer requiring homotopy equivalences to fix $x_1,\dots,x_k$. Evaluating homotopy equivalences $X\to X$ on $x_1,\dots,x_k$ gives a map $E_{n,s-k}\to X^k$ which is a fibration with fiber $E_{n,s}$ over the point $(x_1,\dots,x_k)$.  The long exact sequence of homotopy groups for this fibration ends with the terms
$$
\pi_1(E_{n,s-k})\longrightarrow \pi_1(X^k) \longrightarrow \Gamma_{n,s} \longrightarrow \Gamma_{n,s-k} \longrightarrow 1.
$$
When $k<s$ the first term $\pi_1(E_{n,s-k})$ is trivial by obstruction theory. Namely, we can assume $X$ is obtained by attaching $1$-cells to a set of $s-k$ $0$-cells, and then any loop of homotopy equivalences $f_t : X \to X$ fixing the $0$-cells can be deformed to the trivial loop since $\pi_2(X)=0$.  Thus we obtain the first short exact sequence in statement of the proposition when $k<s$, for arbitrary $n$.

To split this short exact sequence when $k<s$ it suffices to find a map $E_{n,s-k}\to E_{n,s}$ such that the composition $E_{n,s-k}\to E_{n,s} \to E_{n,s-k}$ is homotopic to the identity.   We are free to choose the marked points $x_1,\dots,x_k$ anywhere in the complement of the remaining $s-k$ points, so we choose them in a small contractible neighborhood $N$ of the point $x_{k+1}$. We can then deformation retract $E_{n,s-k}$ onto the subspace $E'_{n,s-k}$ of homotopy equivalences that are fixed on $N$.  (This is particularly easy if we choose $X$ to have a valence one vertex with $x_{k+1}$ as this vertex.) The subspace $E'_{n,s-k}$ includes naturally into $E_{n,s}$, and this inclusion gives the desired map $E_{n,s-k}\to E_{n,s}$ as the composition of the first two maps $E_{n,s-k}\to E'_{n,s-k} \hookrightarrow E_{n,s}\hookrightarrow E_{n,s-k}$, the first map being the retraction produced by the deformation retraction. The composition of the three maps is homotopic to the identity by the deformation retraction itself.

There remain the cases $k=s$.  The issue is whether $\pi_1(E_{n,0})$ is trivial, so that the long exact sequence becomes a short exact sequence. To settle this, consider the fibration $E_{n,1}\to E_{n,0}\to X$ which gives a long exact sequence
$$
1\longrightarrow\pi_1(E_{n,0})\longrightarrow \pi_1(X) \longrightarrow \Gamma_{n,1} \longrightarrow \Gamma_{n,0} \longrightarrow 1
$$
where the initial $1$ is $\pi_1(E_{n,1})$. The middle map in this sequence is the map from $\pi_1$ of the base of the fibration to $\pi_0$ of the fiber, and it is easy to check the definitions to see that this is the map $F_n\to \Autn$ sending an element of $F_n$ to the inner automorphism it determines. The kernel of this map is the center of $F_n$ so it is trivial when $n>1$ and we deduce that $\pi_1(E_{n,0})=1$ in these cases, so we again have the short exact sequence claimed in the proposition.

When $n=1$ and $k=s$ the space $E_{1,0}$ is homotopy equivalent to $S^1$ and the exact sequence of the fibration $E_{1,s}\to E_{1,0} \to X^s$ becomes
$$
1 \longrightarrow \Z \longrightarrow \Z^{s} \longrightarrow \Gamma_{1,s} \longrightarrow\Gamma_{1,0} \longrightarrow 1,
$$
with the map $\Z\to\Z^s$  the diagonal inclusion.  This yields the short exact sequence displaying $\Gamma_{1,s}$ as the semidirect product $\Z_2 \ltimes \Z^{s-1}$.
\end{proof}

\begin{remark}
\label{rem:Birman}
 If we use Laudenbach's theorem to express $\Gamma_{n,s}$ in terms of the mapping class group of $M_{n,s}$ then the short exact sequence of Proposition~\ref{thm:extension} can be derived from a 3-dimensional analog of 
the Birman exact sequence for mapping class groups of surfaces [2].  From this viewpoint the space $E_{n,s}$ is replaced by the diffeomorphism group of $M_{n,s}$, and the resulting fibration is a very simple special case of much more general fibrations due to Cerf, Palais, and Lima.
\end{remark}

\subsection{Homology splitting}

If $s=k$ and $n\ge 2$ the map $\Gamma_{n,s}\to\Gamma_{n,s-k}=\Gamma_{n,0}$ does not split. The reason is that $\Gamma_{n,0}=\Outn$ contains finite subgroups which do not lift.  For example, consider the symmetry group of the graph consisting of two vertices joined by $n+1$ edges.
This is a subgroup of $\Gamma_{n,0}$ which cannot be realized on any graph of rank $n$ by graph symmetries which fix a basepoint, so the subgroup does not lift to any $\Gamma_{n,s}$ with $s\geq 1$.

Homology with coefficients in $\F$ does not see finite subgroups, and in fact when we pass to homology we do obtain a splitting.  Note that it suffices to prove this for $s=1$, i.e. the map from $\Aut(F_n)$ to $\Out(F_n)$. Homology splitting of this map was first proved by Kawazumi in~\cite{Kawazumi}. There is a simple proof of this using the fact that the moduli space of graphs (respectively basepointed graphs) is a rational $K(\pi,1)$ for $\Outn$ (respectively $\Autn$).  The idea is that although there is no natural way to choose a basepoint in a graph one can compensate by taking a suitably weighted sum of all possible basepoints.

\begin{theorem}
\label{thm:injection}
The natural map $\Autn\to \Out(F_n)$ splits on the level of rational homology, so $H_k\big(\Out(F_n)\big)$ embeds into $H_k\big(\Autn\big)$.
\end{theorem}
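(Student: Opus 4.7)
The plan is to work with the rational moduli space models: $M_n$, the moduli of rank-$n$ metric graphs, is a rational $K(\Out(F_n),1)$, and $M^{bp}_n$, its basepointed analogue, is a rational $K(\Aut(F_n),1)$. The forgetful map $\pi\colon M^{bp}_n\to M_n$ induces the natural surjection $\Aut(F_n)\to\Out(F_n)$ on $\pi_1$, hence the rational homology map $\pi_*\colon H_*(\Aut(F_n))\to H_*(\Out(F_n))$ we wish to split. It suffices to build a chain-level section $s$ of $\pi$; then $s_*$ gives the desired injection $H_*(\Out(F_n))\hookrightarrow H_*(\Aut(F_n))$. The cases $n\le 1$ are immediate, as $\Aut(F_n)\to\Out(F_n)$ is then an isomorphism, so assume $n\ge 2$.

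The construction of $s$ averages over basepoints with a specific weighting. For a rank-$n$ graph $X$ with all vertices of valence $\ge 3$, assign each vertex $v$ the weight $w(v):=d_v-2$, where $d_v$ is the valence. The handshake lemma together with $|V(X)|-|E(X)|=\chi(X)=1-n$ gives
\[
\sum_{v\in V(X)}(d_v-2) \;=\; 2|E(X)|-2|V(X)| \;=\; -2\chi(X) \;=\; 2(n-1),
\]
nonzero precisely because $n\ge 2$. The algebraic heart of the construction is that this weighting is additive under edge collapses: collapsing a non-loop edge with distinct endpoints $u_1,u_2$ yields a merged vertex of valence $d_{u_1}+d_{u_2}-2$, hence weight $(d_{u_1}-2)+(d_{u_2}-2)=w(u_1)+w(u_2)$. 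Since cell boundaries in both moduli spaces are governed by edge collapses, this additivity is precisely what turns the weighted sum into a chain map.

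Using the cellular decomposition by combinatorial type, I would then define
\[
s(\sigma_X) \;:=\; \frac{1}{2(n-1)}\sum_{v\in V(X)} w(v)\,\sigma_{(X,v)},
\]
where $\sigma_X$ is the cell of $M_n$ for graph $X$ and $\sigma_{(X,v)}$ the cell of $M^{bp}_n$ for $(X,v)$. The identity $\pi\circ s=\mathrm{id}$ is immediate from the total weight being $2(n-1)$, and $\partial s = s\partial$ reduces, edge-collapse by edge-collapse, to the additivity identity above: both sides contribute $(w(u_1)+w(u_2))\,\sigma_{(X/e,\,u_1u_2)}$ at the merged vertex, and the untouched-vertex terms match term by term.

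The hard part, though really bookkeeping rather than conceptual, is handling the orbifold structure of moduli space cells: cells of $M_n$ carry stabilizer $\Aut(X)$, and cells of $M^{bp}_n$ carry the subgroup $\Aut(X,v)$, so the sum over vertices must be reorganized as a sum over $\Aut(X)$-orbits with multiplicities $|\Aut(X)|/|\Aut(X,v)|$. Because $w$ is $\Aut(X)$-invariant, this reorganization is consistent and the two verifications above carry through. One also has to track the standard signs arising from cell orientations, but these are inherited identically from the cell structure of outer space, so no new subtleties arise, and $s_*$ furnishes the desired splitting.
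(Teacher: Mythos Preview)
Your proof is correct and takes essentially the same approach as the paper: both construct a backwards chain map by weighting each choice of basepoint $v$ by $d_v-2$, and the check that this commutes with the boundary reduces to exactly the additivity identity you isolate (collapsing an edge with endpoints $u_1,u_2$ produces a vertex of weight $w(u_1)+w(u_2)$). The only difference is the chain model: the paper works in the forested-graph complex coming from the spine of Outer space (so there are two boundary pieces, one contracting forest edges and one removing them, and the map is not pre-normalized---one simply observes $\pi_*\circ r = (2n-2)\cdot\mathrm{id}$), whereas you work directly with cells indexed by combinatorial type; but the heart of the argument is identical.
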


\begin{proof}
We define a backwards map on the chain level. We take  $C_*\big(\Autn\big)$ and $C_*\big(\Out(F_n)\big)$ to be defined in terms of the spine of the moduli space of (basepointed) graphs.  We refer to~\cite[section 2]{Stable} for complete details.   The chain complex $C_*\big(\Autn\big)$ is spanned by graphs with specified subforests and a chosen basepoint, while $C_*\big(\Out(F_n)\big)$ is defined in the same way except the graphs do not have basepoints. In both cases, the edges in the subforests are ordered, and changing the order incurs the sign of the permutation.
There are two boundary operators $\partial_C$ and $\partial_R$ which sum over contracting and removing forest edges respectively. In both the basepointed and unbasepointed cases, contracting the $i$th edge of a forest comes with the sign $(-1)^{i+1}$, while removing that edge comes with the sign $(-1)^i$.

The natural projection $\pi\colon\Autn\to\Out(F_n)$ corresponds to the map
$$
\pi_*\colon C_*\big(\Autn\big)\to C_*\big(\Out(F_n)\big)
$$
which forgets the basepoint.

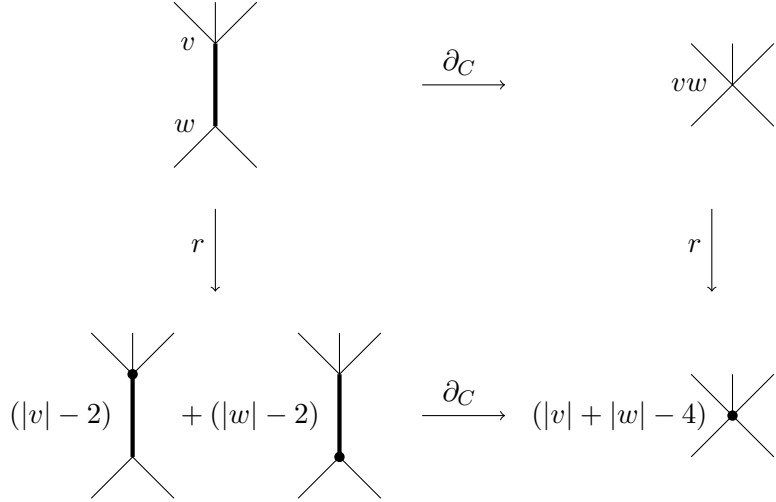
\begin{figure}
\begin{center}
\begin{tikzpicture}[scale=.55]
\draw (-1,-1) to (0,0) to (1,-1);
\draw[ultra thick] (0,0) to (0,2);
\draw   (0,2) to (0,3);
\draw (-1,3) to (0,2) to (1,3);
\node (v) at (0,2)[left] {$v$\,\,};
\node (w) at (0,0)[left] {$w$\,\,};
\draw[->] (5,1)  to  node[above, pos=0.45] {$\partial_C$} (7,1);
\begin{scope}[xshift = 12.5cm, yshift = 1cm]
\draw (-1,-1) to (0,0) to (1,-1);
\draw (-1,1) to (0,0) to (1,1);
\draw (0,0) to (0,1);
\node (vw) at (0,0)[left] {$vw$\,\,\,};
\end{scope}
\draw[->] (0,-2) to node[left, pos=0.45]{$r$} (0,-4);
\begin{scope}[xshift = -2cm, yshift = -8cm]
\draw (-1,-1) to (0,0) to (1,-1);
\draw (0,0) to (0,3);
\draw (-1,3) to (0,2) to (1,3);
\draw[ultra thick] (0,0) to (0,2);
\fill[black] (0,2) circle (.125);
\node (vlab) at (0,1)[left] {$(|v|-2)$\,\,};
\end{scope}
 \begin{scope}[xshift = 3cm, yshift = -8cm]
 \draw (-1,-1) to (0,0) to (1,-1);
\draw (0,0) to (0,3);
\draw (-1,3) to (0,2) to (1,3);
\fill[black] (0,0) circle (.125);
\draw[ultra thick] (0,0) to (0,2);
\node (wlab) at (0,1)[left] {${}+(|w|-2)\,\,$};
\end{scope}
\begin{scope}[yshift = -8cm]
\draw[->] (5,1)  to  node[above, pos=0.45] {$\partial_C$} (7,1);
\end{scope}
\begin{scope}[xshift = 12.5cm, yshift = -7cm]
\draw (-1,-1) to (0,0) to (1,-1);
\draw (-1,1) to (0,0) to (1,1);
\draw (0,0) to (0,1);
\fill[black] (0,0) circle (.125);
\node (exp) at (0,0)[left] {$(|v|+|w|-4)$\,\,\,};
\end{scope}
\begin{scope}[xshift = 12 cm]
\draw[->] (0,-2) to node[left, pos=0.45]{$r$} (0,-4);
\end{scope}
\end{tikzpicture}
\caption{Diagram commutes because $|vw|=|v|+|w|-2$}
\label{fig:partialC}
\end{center}
\end{figure}
We now define a map
$
r\colon C_*\big(\Out(F_n)\big)\to C_*\big(\Autn\big).
$ by
$$
r(G)=\sum_{v\in V(G)} (|v|-2)r_v(G).
$$
Here $V(G)$ is the vertex set of $G$, $|v|$ is the valence of $v$ and $r_v(G)$ is the forested graph $G$ with $v$  specified as the basepoint.
We need to check that $r$ is a chain map. Clearly $\partial_Rr=r\partial_R$ since the definition of $r$ makes no reference to the forest, and the signs in $\partial_R$ make no reference to the basepoint.  For $\partial_C$ we must check whether the order of performing the two operations of adding a basepoint and contracting an edge matters, the signs in $\partial_C$ being the same in both cases. If $e$ is an edge with vertices $v$ and $w$, then adding a basepoint distinct from $v$ and $w$ clearly commutes with contracting $e$.  Adding basepoints at $v$ and at $w$ followed by contracting $e$ results in the same basepointed graph with multiplicity $|v|+|w|-4$, whereas contracting $e$ first results in a vertex $vw$ of valence $|v|+|w|-2$, so adding a basepoint there also gives multiplicity $|v|+|w|-4$ (see Figure~\ref{fig:partialC}).

Now observe that $\pi_*\circ r(G)=k_GG$, where $k_G=\sum_{v\in V(G)}(|v|-2)=2n-2$.
Thus if we are not in the trivial case $n=1$ the composition $\pi_*\circ r$ is represented by a diagonal matrix with nonzero diagonal entries and is therefore invertible. So $r_*$ is injective on homology.
\end{proof}

\section{Cohomology of $\Gamma_{n,s}$}
\label{sec:cohomology}

We are interested in studying the homology of $\Outn$ and $\Autn$, with trivial coefficients in a field $\F$ of characteristic $0$.  The idea is to glue together homology classes of the $\Gamma_{n,s}$ using the assembly maps
described briefly in the Introduction and defined more precisely in Section~\ref{sec:gluing}.
 To find nontrivial classes which can be fed to the assembly maps
 we use some elementary representation theory of symmetric groups and $\GL_n(\Z)$ together with the Leray-Serre spectral sequence applied to the group extensions
\begin{equation}
\label{eq:ses}
1 \longrightarrow F_n^{\,s} \longrightarrow \Gamma_{n,s} \longrightarrow\Gamma_{n,0}=\Outn \longrightarrow 1
\end{equation}
from Section~\ref{subsec:sequences}.

For the calculations it will be convenient to switch from homology to cohomology, which is isomorphic by the universal coefficient theorem since we are taking coefficients in $\F$ and all homology is finite-dimensional over $\F$. In the course of our study we will exploit the structure of $H_i(\Gamma_{n,s})$ as an $\SG_s$-module.  Since all the modules we consider are finite-dimensional and all $\SG_s$-modules are self-dual, the cohomology is isomorphic to the homology also as an $\SG_s$-module, though the isomorphism is not canonical.

\subsection{A little representation theory}

 In this section we establish some notation and collect some results from representation theory which we will use.  All the contents of this section are well-known and can be found, for example, in~\cite{FH}.

Recall that the irreducible representations of  $\SG_s$ correspond to partitions of $s$ and are often represented by drawing Young diagrams with $s$ boxes arranged in rows of non-increasing size.  We use $P_\lambda$ to denote the representation corresponding to the partition $\lambda=(\lambda_1,\ldots,\lambda_k)$, where $\lambda_1\geq\lambda_2\geq\dots\geq\lambda_k$ and $\sum_i\lambda_i=s$. Exponential notation denotes equal values of $\lambda_i$, e.g., $P_{(2,1,1,1,1)}$ is written as $P_{(2,1^4)}$.

\begin{example}
The module $P_{(s)}$ is the $1$-dimensional trivial module.
The module $P_{(1^s)}=\alt$ is the $1$-dimensional {\em alternating} representation of $\SG_s$, where a permutation $\sigma$ acts as multiplication by $\sign(\sigma)=\pm 1$.
The module $P_{(s-1,1)}$ is the $(s-1)$-dimensional {\em standard}  representation $\F^s/\F$ of $\SG_s$.
It contains distinguished elements $v_i$, $1\leq i \leq s$, which satisfy $\sum v_i=0$.
\end{example}

The tensor product of two $\SG_s$-representations is also an $\SG_s$-representation with the diagonal action.
In general the multiplicity of an irreducible representation $P_\nu$ in the decomposition of $P_\lambda\otimes P_\mu$ is difficult to compute, but for $\nu=(s)$ it is known that $P_{(s)}$  occurs with multiplicity $1$ if $\lambda=\mu$ and with multiplicity $0$ otherwise.
One tensor product we will encounter is $P_{\lambda} \otimes \alt$. This is equal to $P_{\lambda ^\prime}$ where $\lambda ^\prime$ denotes the {\em transpose partition}, obtained by switching the rows and columns of the Young diagram.

If $P$ is a representation of $\SG_{s-k}$ and $Q$ is a representation of $\SG_{k}$, then $P\otimes Q$ is a representation of
$\SG_{s-k}\times\SG_{k}$.  If we consider $\SG_{s-k}\times\SG_{k}$ as a subgroup of $\SG_s$ we can form the induced representation.
Following Fulton and Harris~\cite{FH}, we denote this induced representation by $P\circ Q$, i.e.,
$$
P\circ Q = \Ind_{\SG_{s-k}\times\SG_{k}}^{\SG_s} P\otimes Q.
$$
The Littlewood-Richardson rule can be used to compute the decomposition of $P_\lambda \circ P_\mu$ into irreducible modules. When $\mu=(k)$ this specializes to the
{\it Pieri rule}, see~\cite[appendix A]{FH}. This says that the terms of $P_\lambda\circ P_{(k)}$ correspond to all Young diagrams which can be obtained by adding $k$  boxes to the diagram for $\lambda$, each in a different column. An example is illustrated in Figure~\ref{fig:pieri}.
\begin{figure}[b]
\begin{center}
 \begin{tikzpicture}[scale=.5] 
\draw (0,0) -- (2,0);\draw (0,1) -- (2,1);\draw (0,2) -- (2,2);
\draw (0,0) -- (0,2);\draw (1,0) -- (1,2);\draw (2,0) -- (2,2);
\node (circ) at (3,1) {$\circ$};

\begin{scope}[xshift=4cm]
\draw (0,.5) -- (3,.5);\draw (0,1.5) -- (3,1.5);\draw (0,.5) -- (0,1.5);
\draw (1,.5) -- (1,1.5);\draw (2,.5) -- (2,1.5);\draw (3,.5) -- (3,1.5);
\fill [red] (.5,1) circle (.075);\fill [red] (1.5,1) circle (.075);\fill [red] (2.5,1) circle (.075);
 \end{scope}
\node (to) at (8,1) {$=$};

 \begin{scope}[xshift=9cm]
 \draw (0,0) -- (2,0);\draw (0,1) -- (5,1);\draw (0,2) -- (5,2);
\draw (0,0) -- (0,2);\draw (1,0) -- (1,2);\draw (2,0) -- (2,2);
\draw (3,1) -- (3,2);\draw (4,1) -- (4,2);\draw (5,1) -- (5,2);
\fill [red] (2.5,1.5) circle (.075);\fill [red] (3.5,1.5) circle (.075);\fill [red] (4.5,1.5) circle (.075);
 \end{scope}
\node (plus1) at (15,1) {$+$};
 
\begin{scope}[xshift=16cm]
 \draw (0,0) -- (2,0);\draw (0,1) -- (4,1);\draw (0,2) -- (4,2);\draw (0,-1) -- (1,-1);
\draw (0,-1) -- (0,2);\draw (1,-1) -- (1,2);\draw (2,0) -- (2,2);
\draw (3,1) -- (3,2);\draw (4,1) -- (4,2); 
\fill [red] (2.5,1.5) circle (.075);\fill [red] (3.5,1.5) circle (.075);\fill [red] (.5,-.5) circle (.075);
 \end{scope}
\node (plus2) at (21,1) {$+$};
 \begin{scope}[xshift=22cm]
 \draw (0,0) -- (2,0);\draw (0,1) -- (3,1);\draw (0,2) -- (3,2);\draw (0,-1) -- (2,-1);
\draw (0,-1) -- (0,2);\draw (1,-1) -- (1,2);\draw (2,-1) -- (2,2);
\draw (3,1) -- (3,2);  
\fill [red] (2.5,1.5) circle (.075);\fill [red] (1.5,-.5) circle (.075);\fill [red] (.5,-.5) circle (.075);
 \end{scope}
\end{tikzpicture}\caption{Pieri rule for decomposing $P_{(2,2)}\circ P_{(3)}$}
\label{fig:pieri}
\end{center}
\end{figure}
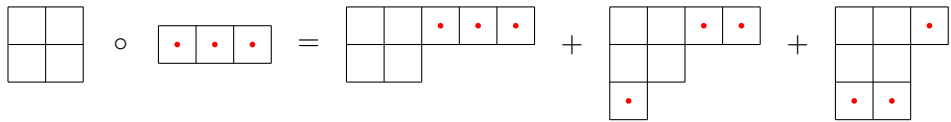

Now let $V$ be an $n$-dimensional vector space.  The irreducible representations of $\GL(V)=\GL_n(\F)$ also correspond to partitions, and we let  $\SF{\lambda}V$ denote the  $\GL_n$-representation associated to the partition $\lambda$.  Since $\dim(V)=n$ only partitions into at most $n$ pieces occur.   {\em Schur-Weyl duality} gives the irreducible decomposition of the representation
$\tV{V}{q}$ as a module over $\GL(V) \times \SG_q$, namely
$$
\tV{V}{q}
\iso \bigoplus_\lambda \, \SF{\lambda}V\otimes P_\lambda,
$$
where the sum is over all partitions of $q$ into at most $n$ pieces (if $\lambda$ has more than $n$ rows the module $\SF{\lambda}V$ is zero) (see, e.g.,~\cite{FH}, Cor.~6.6). We emphasize that $\GL(V)$ acts trivially on $P_{\lambda}$ and $\SG_q$ acts trivially on $\SF{\lambda}V$.

\begin{example} For $q=2$  the Schur-Weyl formula gives
$$
V\otimes V = \big(\SF{(2)}V \otimes P_{(2)}\big) \oplus \big(\SF{(1^2)}V \otimes P_{(1^2)}\big) =
\Sym^2 V \oplus {\textstyle \ext^2} V.
$$
where $\Sym^k$  denotes the $k$-th symmetric power functor on vector spaces
and $\ext^k$ is the $k$-th exterior power.
\end{example}

\begin{notation} We denote by $\wV{V}{q}$  the $\SG_q$-module which is isomorphic as a vector space to  $\tV{V}{q}$, with  $\SG_q$ acting by permuting the factors and multiplying by the sign of the permutation, i.e.,
$$
V^{\wedge q}= V^{\otimes q} \otimes \alt.
$$
\end{notation}

The Schur-Weyl formula translates to a similar formula for $\wV{V}{q}$:
$$
\wV{V}{q} = \tV{V}{q}\otimes \alt \iso \bigoplus_\lambda \, \SF{\lambda}V\otimes P_{\lambda}\otimes \alt = \bigoplus_\lambda \, \SF{\lambda}V\otimes P_{\lambda^\prime},
$$
where the sum is over all partitions of $q$ into at most $n$ pieces.

Finally, we record a computation we will use later.
\begin{lemma}
\label{lem:decomp}
Suppose $\dim(V)=2$. Then
$\SF{(q-k,k)}V\iso \Sym^{q-2k}V\otimes \det^k$ as $\GL(V)$-modules, where $\det^k=(\ext^2V)^{\otimes k}$ is the $1$-dimensional representation given by the $k$-th power of the determinant.
\end{lemma}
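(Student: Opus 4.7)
The plan is to reduce the claim to two standard identities about Schur functors on a vector space $V$ of arbitrary dimension $n$, and then specialize to $n=2$. The first identity is that tensoring with the determinant representation $\det = \ext^n V$ shifts the partition by adding a column of full height: $\SF{\lambda}V \otimes \det \iso \SF{(\lambda_1+1,\dots,\lambda_n+1)}V$ whenever $\lambda$ has at most $n$ parts. The second identity is the basic fact $\SF{(m)}V = \Sym^m V$ for single-row partitions.

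With these in hand, the lemma is a one-line calculation. I would start with the case $k=0$, namely $\SF{(q,0)}V = \Sym^q V$ (second identity with $m=q$). Then iterating the first identity $k$ times in dimension $n=2$ gives
$$
\SF{(q-2k,0)}V \otimes \det^{k} \iso \SF{(q-k,\,k)}V.
$$
Applying the second identity to rewrite $\SF{(q-2k,0)}V = \Sym^{q-2k}V$ produces the stated isomorphism.

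As a sanity check, and as an alternative route one could take if one preferred not to invoke the column-shifting identity, both modules are irreducible $\GL(V)$-representations, so it suffices to compare highest weights with respect to the upper-triangular Borel. The weight of $\SF{(q-k,k)}V$ is $(q-k,k)$, while that of $\Sym^{q-2k}V \otimes \det^{k}$ is $(q-2k,0) + (k,k) = (q-k,k)$; since irreducible polynomial representations of $\GL_2$ are classified by their highest weights, this gives the isomorphism. Dimensions also match: the Weyl dimension formula gives $\dim \SF{(q-k,k)}V = (q-k)-k+1 = q-2k+1 = \dim \Sym^{q-2k}V$.

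There is no real obstacle; the only thing to watch is the convention identifying the Schur functor $\SF{\lambda}$ with the irreducible $\GL(V)$-module of highest weight $\lambda$, which is the convention used in Fulton-Harris and referenced in the preceding paragraphs.
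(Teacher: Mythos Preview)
Your proof is correct. The paper's argument is a direct character computation: it writes down the Schur polynomial $S_{(a,b)}$ in two variables via the bialternant formula, factors it as $(x_1x_2)^b S_{(a-b)}$, and concludes from multiplicativity of characters. Your primary argument invokes the column-shifting identity $\SF{\lambda}V\otimes\det\iso\SF{\lambda+(1^n)}V$ as a black box and iterates; this is really the same Schur-polynomial identity $s_\lambda\cdot e_n=s_{\lambda+(1^n)}$ packaged at a higher level of abstraction, so the two approaches are close cousins. Your alternative via highest weights is a genuinely different (and arguably cleaner) route: it bypasses any character manipulation and appeals directly to the classification of irreducible polynomial $\GL_2$-representations. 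Either of your arguments would serve perfectly well here; the paper's version has the minor advantage of being self-contained given only the bialternant formula, while yours is quicker if the reader already knows the standard structural facts.
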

\begin{proof}
This can be seen by calculating the Schur polynomials $S_\lambda$ for the two sides, which determine the representations uniquely. Using the formula A.4 of~\cite[Appendix A]{FH}  one obtains
$$
S_{(a,b)}=(x_1x_2)^b\Big[\frac{x_1^{a-b+1}-x_2^{a-b+1}}{x_1-x_2}\Big]=(S_{(1,1)})^b S_{(a-b)}.
$$
The lemma  now follows because Schur polynomials of tensor products multiply,
$\SF{(1,1)}H=\det$ and $\SF{(c)}V=\Sym^{c}V$.
\end{proof}

\subsection{The Leray-Serre spectral sequence}
Shifting from homology to cohomology now, the Leray-Serre spectral sequence of a group extension $1\to N\to G\to Q\to 1$ is
a first-quadrant spectral sequence with $E_2^{p,q}=H^p\big(Q;H^q(N)\big)$, which converges to $H^{p+q}(G)$.  Applied to the short exact sequence~\eqref{eq:ses} it reads
\begin{equation}
\label{eq:LS}
E_2^{p,q} = H^p\big(\Out(F_n); H^q(F_n^{\,s})\big) \implies H^{p+q}(\Gamma_{n,s}).
\end{equation}

The symmetric group $\SG_s$ which permutes the factors of $F_n^{\,s}$ induces an action on each of the $E_2$ terms which commutes with all differentials.
We begin by identifying the coefficients $H^q(F_n^{\,s})$  as $\SG_s$-modules.

Throughout this section we set $\h=H^1(F_n)\iso \F^n$.  Note that the action of $\Outn$ on $\h $ factors through the natural $\GL_n(\Z)$ action on $\h$.

\begin{lemma}
\label{lem:Fn}
The cohomology of $F_n^{\,s}$ is given as an $\SG_s$-module by the formula
$$
H^q(F_n^{\,s}) = \wV{\h}{q} \circ P_{(s-q)}.
$$
\end{lemma}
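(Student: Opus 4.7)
The argument is essentially the graded Künneth theorem together with careful bookkeeping of Koszul signs, packaged as an induced representation.

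First I would apply Künneth (valid over the field $\F$) to get an isomorphism of graded $\F$-algebras
$$
H^*(F_n^{\,s}) \iso H^*(F_n)^{\otimes s}.
$$
Since $F_n$ has the wedge of $n$ circles as a $K(F_n,1)$, its cohomology is concentrated in degrees $0$ and $1$ with $H^0(F_n)=\F$ and $H^1(F_n)=\h$. Extracting the degree-$q$ part gives
$$
H^q(F_n^{\,s}) \iso \bigoplus_{S\subset[s],\,|S|=q} \h^{\otimes S},
$$
where $\h^{\otimes S}$ denotes the tensor product that places a copy of $\h$ in each slot indexed by $S$ and a copy of $\F$ in the remaining slots.

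Next I would analyze the $\SG_s$-action coming from permutation of the factors of $F_n^{\,s}$. On the nose this is a permutation of the tensor factors, but because the Künneth identification is a graded one, a permutation $\sigma$ that rearranges degree-$1$ classes past one another picks up the usual Koszul sign. Concretely, if $\alpha_i\in\h$ for $i\in S$ and $\alpha_i=1\in\F$ otherwise, then
$$
\sigma\cdot(\alpha_1\otimes\cdots\otimes\alpha_s) = \sign(\sigma|_S)\,\alpha_{\sigma^{-1}(1)}\otimes\cdots\otimes\alpha_{\sigma^{-1}(s)},
$$
where $\sigma|_S$ is the induced permutation of the $q$ positions occupied by $\h$-factors; positions occupied by $1\in\F$ contribute no sign because they are in even degree.

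Finally I would recognize the module-theoretic structure. The summand indexed by $S_0=\{1,\dots,q\}$ is preserved by $\SG_q\times\SG_{s-q}\subset\SG_s$, and the formula above identifies the action on this summand as $\wV{\h}{q}\otimes P_{(s-q)}$: the sign on the $S_0$-part converts the naive permutation action on $\tH{q}$ into the signed one, which by definition is $\wV{\h}{q}$, while the complementary factor $\SG_{s-q}$ acts trivially on $\F^{\otimes(s-q)}$, giving $P_{(s-q)}$. The remaining summands are the $\SG_s/(\SG_q\times\SG_{s-q})$-translates of this one, so
$$
H^q(F_n^{\,s}) \iso \Ind_{\SG_q\times\SG_{s-q}}^{\SG_s}\bigl(\wV{\h}{q}\otimes P_{(s-q)}\bigr) = \wV{\h}{q}\circ P_{(s-q)},
$$
which is exactly the claim. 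The only delicate point is making sure the Koszul sign lands on the correct tensor factors so that the induced module is $\wV{\h}{q}$ rather than $\tH{q}$; otherwise the proof is a direct computation.
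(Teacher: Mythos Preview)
Your proof is correct and follows essentially the same route as the paper's: K\"unneth, identify the degree-$q$ summands indexed by $q$-subsets of $[s]$, track the Koszul sign to see that the stabilizer $\SG_q\times\SG_{s-q}$ acts as $\wV{\h}{q}\otimes P_{(s-q)}$, and recognize the whole module as the induced representation. Your writeup is in fact a bit more explicit about where the sign comes from, but the argument is the same.
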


\begin{proof}
The K\"unneth formula gives  an isomorphism
$$
H^*(F_n)\otimes \cdots\otimes H^*(F_n)\iso H^*(F_n\times \cdots \times F_n)
$$
via the cohomology cross product.  The group $\SG_s$ acts by permuting the factors, with signs determined by the permutation and the dimension of the cohomology  groups on the left-hand side (see, e.g.,~\cite[Chapter 3B]{Hatcher}).  The cohomology of $F_n$  is $\F$ in dimension $0$, $\h$ in dimension $1$ and zero in higher dimensions, so in dimension $q$ the cohomology   of $F_n^{\,s}$ is the direct sum of $\binom{s}{q}$
copies of $\h^{\otimes q}$.   These copies are permuted by the action of $\SG_s$.  The stabilizer of each copy is isomorphic to $\SG_q\times \SG_{s-q}$, where the action of $\SG_q$ on $\h^{\otimes q}$ is modified by the sign of the permutation since all classes are in dimension $1$.

In other words, $H^q(F_n^{\,s})$ is obtained by inducing  up to $\SG_s$ the $\SG_q\times \SG_{s-q}$-module $\wV{\h}{q} \otimes P_{(s-q)}$.
\end{proof}

We  now read off information which we obtain immediately from the spectral sequence~\eqref{eq:LS}. The first observation applies to the case $s=1$. The same result was obtained earlier by Kawazumi~\cite{Kawazumi} using a different method.

\begin{proposition}
\label{prop:Aut-Out}
There is an isomorphism
$H^k\!\big(\Autn;\!\F\big)\iso H^k\!\big(\Out(F_n);\!\F\big)\oplus H^{k-1}\!\big(\Out(F_n);\h\big)$.
\end{proposition}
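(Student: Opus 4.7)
The plan is to apply the Leray--Serre spectral sequence~\eqref{eq:LS} in the case $s=1$, where $\Gamma_{n,1}=\Autn$ and the short exact sequence becomes
\[
1\longrightarrow F_n \longrightarrow \Autn \longrightarrow \Outn \longrightarrow 1.
\]
Because $F_n$ has cohomological dimension $1$, only the two rows $q=0$ and $q=1$ of the $E_2$-page are nonzero, with $E_2^{p,0}=H^p(\Outn;\F)$ and $E_2^{p,1}=H^p(\Outn;H^1(F_n))=H^p(\Outn;\h)$. The only possibly nonzero differential is $d_2\colon E_2^{p,1}\to E_2^{p+2,0}$, so once this is handled the spectral sequence degenerates at $E_3=E_\infty$ and the abutment fits into a short exact sequence
\[
0\longrightarrow H^k(\Outn;\F)\longrightarrow H^k(\Autn;\F)\longrightarrow H^{k-1}(\Outn;\h)\longrightarrow 0,
\]
which automatically splits as a sequence of $\F$-vector spaces, yielding the claimed direct sum decomposition.

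The heart of the argument is therefore showing $d_2=0$, and this is exactly where Theorem~\ref{thm:injection} is used. Dualizing the homological splitting $r_*\colon H_k(\Outn)\hookrightarrow H_k(\Autn)$ and using that all homology groups in sight are finite dimensional over $\F$, the inflation map $\pi^*\colon H^p(\Outn;\F)\to H^p(\Autn;\F)$ induced by $\pi\colon\Autn\to\Outn$ is injective for every $p$. This inflation map is precisely the edge homomorphism of the spectral sequence, which factors as the quotient $E_2^{p,0}\twoheadrightarrow E_\infty^{p,0}$ followed by the inclusion $E_\infty^{p,0}=F^p H^p\hookrightarrow H^p(\Autn;\F)$. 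Injectivity of the composition forces the quotient map to be an isomorphism, so the only incoming differential $d_2\colon E_2^{p-2,1}\to E_2^{p,0}$ must vanish for every $p$.

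The main (essentially only) obstacle is the vanishing of $d_2$, and fortunately it is already resolved by Theorem~\ref{thm:injection}; everything else is a formal consequence of the two-row shape of the spectral sequence and the fact that short exact sequences of $\F$-vector spaces split. In particular, no new computation with the chain-level retraction $r$ is needed, and no information about the $\Outn$-module structure on $\h$ beyond what enters into $E_2^{p,1}$ is required.
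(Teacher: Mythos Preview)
Your proof is correct and follows essentially the same approach as the paper: apply the two-row Leray--Serre spectral sequence for the extension $1\to F_n\to\Autn\to\Outn\to 1$, and use Theorem~\ref{thm:injection} (via the edge homomorphism) to force $d_2=0$. The paper's argument is identical in structure, differing only in minor phrasing.
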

\begin{proof}
In the spectral sequence associated to $1\to F_n\to \Autn\to\Out(F_n)\to 1$ we have $E_2^{p,q}\iso H^p\big(\Out(F_n);H^q(F_n)\big)$ with differentials of bidegree $(2,-1)$. Since $F_n$ has cohomological dimension one there are only two nontrivial rows, namely $q=0$ and $q=1$, so  the only possible nonzero differentials in the entire spectral sequence are on the $E_2$ page; they  start in the top row $q=1$ with target in the bottom row $q=0$.

The map on cohomology induced by $p\colon \Autn\to \Outn$ factors through the edge
homomorphism $e\colon E_\infty^{p,0} \to  H^p\big(\Autn\big)$
$$
\begin{tikzpicture} 
\node (On) at (0,1.2) {$H^p\big(\Outn\big)$};
\node (An) at (7,1.2) {$H^p\big(\Autn\big)$};
\node (Q) at (3.5,0) {$H^p\big(\Outn\big)/\mathrm{Im}(d^2)$};
 \draw[->] (On) to node[above, pos=0.45] {$p^*$} (An);
\draw [->] (On) to (Q);
\draw [->] (Q) to node[right, pos=0.35] {\,\,\,\,$e$} (An);
\end{tikzpicture}
$$
The top arrow is injective by Theorem~\ref{thm:injection}, so  the left arrow is as well, i.e., $\mathrm{Im}(d^2)=0$ and the differentials on the $E_2$ page are also trivial.  Thus $$
\begin{aligned}
H^k\big(\Autn\big) &= E_2^{k,0}\oplus E_2^{k-1,1}\\ &= H^k\big(\Outn\big)\oplus H^{k-1}\big(\Outn;H^{1}(F_n)\big) \\&= H^k\big(\Outn\big)\oplus H^{k-1}\big(\Out(F_n);\h\big).
\end{aligned}
$$
\end{proof}

The next observation has to do with the top-dimensional cohomology of $\Gamma_{n,s}$.

\begin{proposition}
\label{prop:vcd-nonvanish}
$H^k(\Gamma_{n,s})$ vanishes for $k>2n-3+s$ and   $H^{2n-3+s}(\Gamma_{n,s})$ is given as an $\SG_s$-module by
\begin{align*}
H^{2n-3+s}(\Gamma_{n,s})&\iso H^{2n-3}\left(\Out(F_n);\wV{\h}{s}\right).
\end{align*}
\end{proposition}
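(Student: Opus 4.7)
The plan is to run the Leray-Serre spectral sequence~\eqref{eq:LS} associated to the short exact sequence $1\to F_n^{\,s}\to\Gamma_{n,s}\to\Outn\to1$ and show that in total degree $2n-3+s$ essentially only one $E_2$-term survives.

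First, note two vanishing bounds on the $E_2$-page. The group $F_n$ has cohomological dimension $1$, so by the K\"unneth formula $F_n^{\,s}$ has cohomological dimension $s$; hence $H^q(F_n^{\,s})=0$ for $q>s$. On the other hand, since $\Outn$ has $\vcd=2n-3$ and we work with $\F$-coefficients in characteristic zero, $H^p(\Outn;M)=0$ for $p>2n-3$ and any coefficient module $M$. Thus $E_2^{p,q}=H^p(\Outn;H^q(F_n^{\,s}))$ vanishes whenever $p>2n-3$ or $q>s$. This already yields $H^k(\Gamma_{n,s})=0$ for $k>2n-3+s$ and, in total degree $k=2n-3+s$, forces all terms with $p+q=k$ to vanish except possibly $E_2^{2n-3,s}$.

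Next I would check that $E_2^{2n-3,s}=E_\infty^{2n-3,s}$. The differential $d_r$ has bidegree $(r,1-r)$ with $r\ge 2$, so $d_r$ leaving $E_r^{2n-3,s}$ lands in column $2n-3+r>2n-3$ and is therefore zero, while $d_r$ entering $E_r^{2n-3,s}$ originates in row $s+r-1>s$ and is also zero. Since $E_\infty^{p,q}=0$ for the other $(p,q)$ with $p+q=2n-3+s$, the filtration on $H^{2n-3+s}(\Gamma_{n,s})$ has a unique nonzero quotient, giving an isomorphism $H^{2n-3+s}(\Gamma_{n,s})\cong E_2^{2n-3,s}=H^{2n-3}\bigl(\Outn;H^s(F_n^{\,s})\bigr)$.

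Finally, I identify the coefficients using Lemma~\ref{lem:Fn}: plugging $q=s$ gives $H^s(F_n^{\,s})=\wV{\h}{s}\circ P_{(0)}$, and since $P_{(0)}$ is the trivial representation of $\SG_0$ and induction from $\SG_s\times\SG_0=\SG_s$ up to $\SG_s$ is the identity functor, we have $H^s(F_n^{\,s})\cong\wV{\h}{s}$ as an $\SG_s$-module. Substituting yields the claimed isomorphism $H^{2n-3+s}(\Gamma_{n,s})\cong H^{2n-3}(\Outn;\wV{\h}{s})$. There is no real obstacle here: the argument is just bookkeeping with the spectral sequence corner, and everything needed has already been set up in Lemma~\ref{lem:Fn} and the exact sequence~\eqref{eq:ses}.
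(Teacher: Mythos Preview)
Your proof is correct and follows exactly the same approach as the paper's: both use the Leray-Serre spectral sequence~\eqref{eq:LS}, observe that the $E_2$-page is confined to the rectangle $0\le p\le 2n-3$, $0\le q\le s$ by the $\vcd$ bounds, and conclude that the corner term $E_2^{2n-3,s}$ survives untouched. The paper states this more tersely, but your added detail on the bidegrees of the differentials and the explicit invocation of Lemma~\ref{lem:Fn} to identify $H^s(F_n^{\,s})$ is entirely in the same spirit.
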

\begin{proof}
The cohomology group $H^p\big(\Out(F_n);H^q(F_n^{\,s})\big)$ vanishes if either  $p>2n-3$ or $q>s$ since the virtual cohomological dimension of $\Out(F_n)$ is $2n-3$ and the virtual cohomological dimension of $F_n^{\,s}$ is equal to $s$.  Thus the only possible nonzero terms in the spectral sequence~\eqref{eq:LS} lie in a rectangle with $E_2^{2n-3,s}$ at its upper right-hand corner, so all differentials into or out of  $E_2^{2n-3,s}$ are zero and
$$
H^{2n-3+s}(\Gamma_{n,s})\iso H^{2n-3}\big(\Out(F_n);H^s(F_n^{\,s})\big)\iso H^{2n-3}\left(\Out(F_n);\wV{\h}{s}\right).
$$
\end{proof}

\subsection{Rank zero}
Since $\Gamma_{0,s}$ is trivial, we just have
$$
H^i(\Gamma_{0,s}) = \begin{cases}
P_{(s)} = \F & \mbox{if } i=0 \\
0 & \mbox{if } i\not=0. \\
\end{cases}
$$

\subsection{Rank one}
\label{subsec:rankone}
For $n=1$ the short exact sequence~\eqref{eq:ses} is a restatement of the fact that
$$
\Gamma_{1,s}=\Z_2 \ltimes  \Z^{s-1},
$$
where the $\Z_2$ acts via $x\mapsto-x$. We can use this to compute the cohomology of $\Gamma_{1,s}$ as an $\SG_s$-module without  appealing to the Leray-Serre spectral sequence, as follows.

\begin{proposition}
\label{prop:rankone}
As a representation of $\SG_s$
$$
H^i(\Gamma_{1,s})\iso
\begin{cases} P_{(s-i,1^i)} &\mbox{if } i \mbox{ is even}   \\
0 & \mbox{if } i\mbox{ is odd}. \end{cases}
$$
In particular, $H^{2k}(\Gamma_{1,2k+1})\iso  P_{(1^{2k+1})} = \F$ with the alternating action.
\end{proposition}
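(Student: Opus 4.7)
My plan is to use the semidirect product structure $\Gamma_{1,s} = \Z_2 \ltimes \Z^{s-1}$ from Proposition~\ref{thm:extension} to compute $H^*(\Gamma_{1,s})$ directly as the $\Z_2$-invariants of $H^*(\Z^{s-1})$. Since $|\Z_2|=2$ is invertible in $\F$, the Lyndon--Hochschild--Serre spectral sequence for this split extension has $H^p(\Z_2;M)=0$ for $p>0$, so it collapses at $E_2$ to give
$$H^i(\Gamma_{1,s};\F) \iso H^i(\Z^{s-1};\F)^{\Z_2}.$$
The splitting $\Z_2\hookrightarrow \Gamma_{1,s}$ (flipping the loop) is $\SG_s$-equivariant because that move involves no leaves, so the isomorphism above respects the symmetric group action.

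Next I would identify $H^*(\Z^{s-1};\F)$ as an $\SG_s$-module. The normal subgroup $\Z^{s-1}$ is generated by the $s$ leaf-wrapping elements $t_1,\ldots,t_s$ modulo the single $\SG_s$-invariant relation $t_1+\cdots+t_s=0$, so $\Z^{s-1}\otimes \F \iso \F^s/\F \iso V$, where $V:=P_{(s-1,1)}$ is the standard representation. The cup product then identifies $H^*(\Z^{s-1};\F) \iso \ext^* V$ as a graded $\SG_s$-algebra. The generator of $\Z_2$ acts by $-1$ on $V$ and hence by $(-1)^i$ on $\ext^i V$, so the invariants vanish in odd degrees and equal $\ext^i V$ in even degrees.

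The remaining step, which carries the content, is to identify $\ext^i V$ with the hook $P_{(s-i,1^i)}$. From the splitting $\F^s = P_{(s)}\oplus V$ of the permutation representation one obtains
$$\ext\nolimits^i \F^s \iso \ext\nolimits^i V \,\oplus\, \ext\nolimits^{i-1} V.$$
On the other hand the standard signed-basis description realizes $\ext^i \F^s$ as the induced representation $P_{(s-i)}\circ P_{(1^i)}$, which by the transpose (vertical-strip) form of the Pieri rule decomposes as $P_{(s-i,1^i)}\oplus P_{(s-i+1,1^{i-1})}$ — the two partitions of $s$ obtained from a single row of length $s-i$ by adding $i$ boxes, no two in the same row. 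Since these two hooks are distinct irreducibles, an induction on $i$ starting from $\ext^0 V = P_{(s)}$ peels them off one at a time and yields $\ext^i V \iso P_{(s-i,1^i)}$. The ``In particular'' statement then follows by substituting $i=2k$, $s=2k+1$. The only mildly delicate point — not really an obstacle — is remembering to use the dual, vertical-strip form of Pieri rather than the horizontal version stated in the excerpt.
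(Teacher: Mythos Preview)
Your proof is correct and follows essentially the same route as the paper: both use $\Gamma_{1,s}=\Z_2\ltimes\Z^{s-1}$ to reduce to the $\Z_2$-invariants in $\ext^*(\F^s/\F)$, identify $\F^s/\F$ with the standard representation, and then invoke $\ext^i P_{(s-1,1)}\cong P_{(s-i,1^i)}$. The only difference is that the paper simply cites \cite[Ex.~4.6]{FH} for this last identification, whereas you supply a self-contained inductive proof via the vertical-strip Pieri rule.
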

\begin{proof}
The rational  cohomology of $\Gamma_{1,s}=\Z_2 \ltimes  \Z^{s-1}$ is the  invariants of the $\Z_2$-action on the cohomology of $\Z^{s-1}$ induced from the action on $\Z^{s-1}$.  The cohomology of $\Z^{s-1}$  is the exterior algebra on $s-1$ generators.
Thus we get the even degree part of this exterior algebra:
$$
H^i(\Gamma_{1,s})\iso
\begin{cases}
\bigwedge^i \F^{s-1} &\mbox{if } i \mbox{ is even}   \\
0 & \mbox{if } i\mbox{ is odd}.
\end{cases}
$$
To see the $\SG_s$ action,    write $\Gamma_{1,s}=\Z_2 \ltimes (\Z^s/\Z)$.
The representation $\F^s/\F$ is the  standard representation  $P_{(s-1,1)}$ of $\SG_s$, and  by~\cite[Ex. 4.6]{FH} we have $\ext^i(\F^s/\F)=  P_{(s-i, 1^{i})}$ as an $\SG_{s}$-module.
\end{proof}
We record this calculation for small values of $s$ in table form at the end of the paper (Section~\ref{sec:tables}).
We note that the results agree with the calculations  via dihedral homology in~\cite{CKV2}.

\subsection{Rank two}
Recall that $\Gamma_{2,0}=\Out(F_2)\iso \GL_2(\Z)$, so that for $n=2$ the
$E_2$ term of~\eqref{eq:LS} is
$$
E_2^{p,q}=H^p\big(\GL_2(\Z);H^q(F_2^{\,s})\big).
$$
Since   $ \GL_2(\Z)$ has virtual cohomological dimension 1, the only potentially nonzero terms on the $E_2$-page of this spectral sequence lie in the first two columns $p=0$ and $p=1$. For $p=1$ the cohomology of $\GL_2(\Z)$ is closely related to modular forms; we review this relation in the next subsection.

\subsubsection{Modular forms}\label{sec:modular}
\label{subsec:modular}
Let $\mathcal M_s$ be the vector space of  classical modular forms for $\SL_2(\Z)$ of weight $s$, and let $\mathcal S_s\subset \mathcal M_s$ be the subspace of cusp forms. See~\cite{Lang} for an elementary introduction to these spaces.  They satisfy
$$
\bigoplus_{s\geq 0}\, \mathcal M_s\iso  \F[E_4,E_6],
$$
where
$E_4$ and $E_6$ are generators of weight
$4$ and $6$ respectively. In particular, $\mathcal M_s$ is nonzero only for $s>2$ even.  In these cases the subspace $\mathcal S_s$ has codimension $1$.
The classical Eichler-Shimura isomorphism  (see, e.g.,~\cite{Hab}) relates modular forms to the cohomology of $\SL_2(\Z)$:
$$
H^1\big(\SL_2(\Z); \Sym^s(\F^2)\big)\iso\mathcal M_{s+2} \oplus \mathcal S_{s+2}.
$$

We next review the relation between cusp forms and the stabilizer of the cusp at infinity.  Let $P\leq \SL_2(\Z)$ be the (parabolic) subgroup generated by the matrix
$\left(\!\begin{smallmatrix} 1 & 1 \\ 0 & 1 \end{smallmatrix}\!\right)$ and
consider the map
$$
\rho\colon H^1\big(\SL_2(\Z); \Sym^{s}(\F^2)\big) \longrightarrow H^1 \big(P; \Sym^{s}(\F^2)\big)
$$
induced by inclusion. Since $P\iso \Z$, its first cohomology with any coefficients is simply the coinvariants of the action, which is isomorphic to the space of invariants.  If $x$ and $y$ are a basis for $\F^2$, the generator of  $P$ acts on $\Sym^s(\F^2)=\F[x^s,x^{s-1}y,\ldots,xy^{s-1},y^s]$ by sending $x\mapsto x$ and $y\mapsto x+y$, so the space of invariants is  $1$-dimensional, spanned by $x^s$.  The map $\rho$ can be identified with the map
$\mathcal{M}_{s+2} \oplus \mathcal{S}_{s+2} \to \mathcal{M}_{s+2}/\mathcal{S}_{s+2}$ (projection on the first factor, zero on the second factor) 
given by the normalized value of the modular form at infinity (see~\cite{Hab}).

We can reinterpret $\rho$ in terms of the cohomology of $\GL_2(\Z)$ using the short exact sequence $1\to \SL_2(\Z)\to \GL_2(\Z)\to \Z_2\to 1$ (see, e.g.,~\cite{CKV1}); this  gives
$$
H^1\big(\GL_2(\Z); \Sym^s(\F^2)\otimes \det\big)\iso \mathcal{M}_{s+2} \hbox{\ \,\, and \,\, }
H^1\big(\GL_2(\Z); \Sym^s(\F^2)\big)\iso \mathcal{S}_{s+2}.
$$
Since $H^1\big(\SL_2(\Z); \Sym^{s}(\F^2)\big)=\mathcal M_{s+2} \oplus \mathcal S_{s+2}$ we see that  the restriction of $\rho$ to the second factor
$$
\rho\colon H^1\big(\GL_2(\Z); \Sym^{s}(\F^2)\big) \longrightarrow H^1\big(P; \Sym^{s}(\F^2)\big)
$$
is zero, but on the first factor
$$
\rho\colon H^1\big(\GL_2(\Z); \Sym^{s}(\F^2)\otimes \det\big) \longrightarrow H^1 \big(P; \Sym^{s}(\F^2)\otimes \det \big)
$$
has $1$-dimensional image when $s>0$ is even.

\subsubsection{Cohomology calculations}
\begin{lemma}
\label{thm:GL}
Let $\h=H^1(F_2)\iso \F^2$. Then as an $\SG_q$-module,
$$
H^0\big(\GL_2(\Z);\wV{\h}{q}\big)=
\begin{cases}  P_{(2^{2m})} &\mbox{if } q=4m   \\
0 & \mbox{otherwise}. \end{cases}
$$

$$
H^1\big(\GL_2(\Z);\wV{\h}{q}\big)=
\begin{cases}  0 &\mbox{if $q$ is odd} \\
&\\
W_q:=\displaystyle\bigoplus_{0\leq i<\frac{q}{2}}\mathcal X_{q,i}\otimes P_{(2^i,1^{q-2i})} &\mbox{if $q$ is even}
\end{cases}
$$
where  $\mathcal X_{q,i}= \mathcal S_{q+2-2i}$ if $i$ is even and $\mathcal M_{q+2-2i}$ if $i$ is odd.  In either case $\mathcal X_{q,i}$ is   trivial as an $\SG_q$-module.
\end{lemma}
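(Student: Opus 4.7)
The plan is to decompose $\wV{\h}{q}$ as a $\GL_2(\Z)\times\SG_q$-module via Schur--Weyl duality and then compute $H^0$ and $H^1$ summand-by-summand using the Eichler--Shimura isomorphism reviewed in Section~\ref{subsec:modular}.

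First I would apply the version of Schur--Weyl duality stated just before the lemma,
$$
\wV{\h}{q} \;\iso\; \bigoplus_{0\le k\le q/2} \SF{(q-k,k)}\h \otimes P_{(2^k,1^{q-2k})},
$$
where the sum is truncated at $q/2$ because $\dim\h=2$. By Lemma~\ref{lem:decomp}, the $\GL_2$-factor is $\SF{(q-k,k)}\h\iso \Sym^{q-2k}\h\otimes \det^k$. Since the $\SG_q$-action is trivial on the $\GL_2$-factors, both $H^0$ and $H^1$ split as direct sums indexed by $k$ with the $\SG_q$-module $P_{(2^k,1^{q-2k})}$ appearing as a ``formal coefficient.''

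For $H^0$, the invariants of $\SL_2(\Z)$ on $\Sym^{q-2k}\h$ are zero unless $q-2k=0$, since $\SL_2(\Z)$ acts irreducibly and nontrivially on $\Sym^m\h$ for $m>0$. When $q=2k$ we are left with $\det^k$, whose $\GL_2(\Z)$-invariants are nonzero exactly when $k$ is even. Writing $k=2m$ and so $q=4m$ gives the single surviving summand $P_{(2^{2m})}$, matching the claim.

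For $H^1$, I would invoke the identifications
$$
H^1\bigl(\GL_2(\Z);\Sym^{s}\h\bigr)\iso \mathcal{S}_{s+2},\qquad
H^1\bigl(\GL_2(\Z);\Sym^{s}\h\otimes\det\bigr)\iso \mathcal{M}_{s+2}
$$
recalled in Section~\ref{subsec:modular}, applied with $s=q-2k$ and using that $\det^k$ is trivial or equal to $\det$ according to the parity of $k$. This yields $\mathcal{X}_{q,k}=\mathcal{S}_{q+2-2k}$ for $k$ even and $\mathcal{M}_{q+2-2k}$ for $k$ odd, all of which are trivial as $\SG_q$-modules. When $q$ is odd every weight $q+2-2k$ is odd, so all modular form spaces for $\SL_2(\Z)$ vanish and $H^1=0$. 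When $q$ is even, I must still justify why the index $k=q/2$ contributes nothing: there $\Sym^{0}\h\otimes\det^{q/2}$ is either the trivial module or $\det$, and both give $H^1\iso \mathcal{S}_2$ or $\mathcal{M}_2$, which are zero. This is exactly why the sum runs over $0\le i<q/2$.

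The main obstacle is bookkeeping: correctly tracking the parity of $k$ (which flips cusp forms and full modular forms) and verifying that the boundary case $k=q/2$ drops out, together with a clean invocation of the $\SL_2/\GL_2$ version of Eichler--Shimura so that the $\det$-twists are properly absorbed. None of the individual steps is deep once Schur--Weyl, Lemma~\ref{lem:decomp}, and the modular form dictionary are in place.
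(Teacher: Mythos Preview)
Your proposal is correct and follows essentially the same route as the paper: Schur--Weyl duality plus Lemma~\ref{lem:decomp} to reduce to $\Sym^{q-2k}\h\otimes\det^k$, then the Eichler--Shimura dictionary from Section~\ref{subsec:modular} for $H^1$, with the boundary term $k=q/2$ killed by $\mathcal{M}_2=0$. Your treatment of the $q$ odd case and the $k=q/2$ drop-out is slightly more explicit than the paper's, but the argument is the same.
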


\begin{remark}
\label{rem:WComputations}
The formula in the statement above gives the following pattern for the first few $W_{q}:$
\begin{align*}
W_0&=0\\
W_2&=(\mathcal S_4\otimes P_{(1^2)})  \\
W_4&=(\mathcal S_6\otimes P_{(1^4)}) \oplus (\mathcal M_4\otimes P_{(2,1^2)}) \\
W_6&=(\mathcal S_8\otimes P_{(1^6)}) \oplus (\mathcal M_6\otimes P_{(2,1^4)}) \oplus (\mathcal S_4\otimes P_{(2^2,1^2)}) \\
W_8&=(\mathcal S_{10}\otimes P_{(1^8)}) \oplus (\mathcal M_8\otimes P_{(2,1^6)}) \oplus
            (\mathcal S_6\otimes P_{(2^2,1^4)}) \oplus (\mathcal M_4\otimes P_{(2^3,1^2)}) \\
W_{10}&=(\mathcal S_{12}\otimes P_{(1^{10})}) \oplus (\mathcal M_{10}\otimes P_{(2,1^8)}) \oplus
               (\mathcal S_{8}\otimes P_{(2^2,1^6)}) \oplus (\mathcal M_{6}\otimes P_{(2^3,1^4)})\oplus (\mathcal S_4\otimes P_{(2^4,1^2)}).
\end{align*}
However, the dimension of $\mathcal M_k$ is  $1$ for $k=4,6,8,10$, and hence the dimension of $\mathcal S_k$ is trivial in those degrees. The module  $\mathcal S_{12}$ is $1$-dimensional, so we see interesting modular forms entering the picture starting with $W_{10}$. Using this information, the above list simplifies to
\begin{align*}
W_0&=0  \\
W_2&=0  \\
W_4&=    P_{(2,1^2)} \\
W_6&=   P_{(2,1^4)} \\
W_8&=   P_{(2,1^6)} \oplus  P_{(2^3,1^2)}\\
W_{10}&=P_{(1^{10})} \oplus P_{(2,1^8)} \oplus   P_{(2^3,1^4)}.
\end{align*}
\end{remark}

\begin{proof} We first decompose the coefficients $\wV{\h}{q}$ into irreducible components using Schur-Weyl duality.
Since $\h$ has dimension $2$,  this gives
$\wV{\h}{q}\iso \bigoplus_\lambda \SF{\lambda}\h\otimes P_{\lambda^\prime}$, where the sum is over all partitions of $q$ into at most $2$ pieces, i.e., $\lambda=(q-k,k)$.

Now $H^0\big(\GL_2(\Z);\wV{\h}{q}\big)$ is equal to the $\GL_2(\Z)$-invariants of $\wV{\h}{q}$, so we are looking for the trivial representations $\SF{\lambda}\h$ appearing in the Schur-Weyl formula.  By Lemma~\ref{lem:decomp} we have
$\SF{(q-k,k)}\h\iso \Sym^{q-2k}\h\otimes \det^k$,
which is clearly trivial only  if $q=2k$ and $k$ is even. Therefore
as an $\SG_q$-module we have
$$
H^0\big(\GL_2(\Z);\wV{\h}{q}\big) =
\begin{cases}
  P_{(2m,2m)^\prime}=P_{(2^{2m})} &\hbox{if  } q=4m\\
  0 &\hbox{otherwise.}
\end{cases}
$$

For the first cohomology we have
\begin{align*}
 H^1\big(\GL_2(\Z);\wV{\h}{q}\big) &=\bigoplus_{0\leq k\leq \frac{q}{2}} H^1\big(\GL_2(\Z);\SF{(q-k,k)}\h \otimes P_{(q-k,k)^\prime}\big)\\
 &=\bigoplus_{0\leq k\leq \frac{q}{2}} H^1\big(\GL_2(\Z);\SF{(q-k,k)}\h\big)\otimes P_{(q-k,k)^\prime}\\
 &=\bigoplus_{0\leq k\leq \frac{q}{2}} H^1\big(\GL_2(\Z);\Sym^{q-2k}\h \otimes {\det}^{k} \big)\otimes P_{(q-k,k)^\prime}\\
 &=\bigoplus_{0\leq k\leq \frac{q}{2}} H^1\big(\GL_2(\Z);\Sym^{q-2k}\h \otimes {\det}^{k} \big)\otimes P_{(2^k,1^{q-2k})}.
\end{align*}
The computations in Section~\ref{sec:modular} now give
$$
H^1\big(\GL_2(\Z);\Sym^{r}\h\otimes{\det}^{\ell}\big)=
\begin{cases}
0&\text{if $r$ is odd,}\\
\mathcal S_{r+2} &\text{if $r$ and $\ell$ are even,}\\
\mathcal M_{r+2} &\text{if $r$ is even and $\ell$ is odd.}\\
\end{cases}
$$

Substituting $r=q-2k$, $\ell=k$ into these formulas and  using the fact that $\mathcal M_2=0$  completes the calculation.
\end{proof}

We  now have the tools we need to completely compute the cohomology of $\Gamma_{2,s}$ as an $\SG_s$-module. {}

\begin{theorem}
\label{thm:ranktwo}
The cohomology of $\Gamma_{2,s}$   is
$$
H^{i}(\Gamma_{2,s}) = \begin{cases}
P_{(2^{2m})}\circ P_{(s-4m)} & i=4m\leq s\\
0 & i=4m+2\\
W_{2m}\circ P_{(s-2m)} & i= 2m+1\leq s+1\\
0& \hbox{otherwise,}
\end{cases}
$$
where $W_{2m}$ is the module defined in the statement of Lemma~\ref{thm:GL}.
\end{theorem}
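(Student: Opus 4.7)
The plan is to run the Leray–Serre spectral sequence for the extension~\eqref{eq:ses} with $n=2$, recalling that $\Gamma_{2,0}=\GL_2(\Z)$:
$$E_2^{p,q} = H^p\big(\GL_2(\Z);\, H^q(F_2^{\,s})\big) \Longrightarrow H^{p+q}(\Gamma_{2,s}).$$
This whole setup is $\SG_s$-equivariant because the $\SG_s$-action permutes the $F_2$-factors and acts trivially on the quotient $\GL_2(\Z)$. Since $\GL_2(\Z)$ has virtual cohomological dimension $1$ over $\F$, only the columns $p=0,1$ contribute, so every higher differential $d_r$ with $r\ge 2$ lands in the zero region, and $E_2=E_\infty$ with no further argument.

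To identify the $E_2$ entries, I would combine Lemma~\ref{lem:Fn}, which gives $H^q(F_2^{\,s})=\wV{\h}{q}\circ P_{(s-q)}$ as an $\SG_s$-module, with Lemma~\ref{thm:GL}, which computes $H^p(\GL_2(\Z);\wV{\h}{q})$ for $p=0,1$. The key conceptual step is to pull the induction outside of the $\GL_2(\Z)$-cohomology: the $\GL_2(\Z)$-action is trivial on the $P_{(s-q)}$-factor, and induction from $\SG_q\times\SG_{s-q}$ up to $\SG_s$ is an exact functor commuting with $\GL_2(\Z)$-cohomology, giving
$$E_2^{p,q} = H^p\big(\GL_2(\Z);\,\wV{\h}{q}\big)\circ P_{(s-q)}.$$
Applying Lemma~\ref{thm:GL} then shows that $E_2^{0,q}$ is $P_{(2^{2m})}\circ P_{(s-4m)}$ when $q=4m$ and zero otherwise, while $E_2^{1,q}$ is $W_q\circ P_{(s-q)}$ when $q$ is even and zero when $q$ is odd, where in each case terms vanish if $s-q<0$ by virtue of the vcd of $F_2^{\,s}$.

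The two-column spectral sequence then produces, for each $i$, a short exact sequence of $\SG_s$-modules
$$0 \longrightarrow E_2^{1,i-1}\longrightarrow H^i(\Gamma_{2,s})\longrightarrow E_2^{0,i}\longrightarrow 0,$$
which splits because finite-dimensional $\SG_s$-representations in characteristic zero are semisimple. A quick case split on $i \bmod 4$ then yields the claimed formula: for $i=4m$ only the $E_2^{0,i}$ summand survives (since $i-1$ is odd); for $i=4m+2$ both summands vanish; and for odd $i=2m+1$ only the $E_2^{1,2m}$ summand survives. The only step requiring genuine care is the interchange of induction with $\GL_2(\Z)$-cohomology; degeneration of the spectral sequence and splitting of the filtration are both automatic, so no serious obstacle remains once Lemmas~\ref{lem:Fn} and~\ref{thm:GL} are in hand.
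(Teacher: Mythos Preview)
Your proposal is correct and follows essentially the same route as the paper: both run the two-column Leray--Serre spectral sequence for the extension $1\to F_2^{\,s}\to\Gamma_{2,s}\to\GL_2(\Z)\to 1$, observe that $E_2=E_\infty$ since all differentials exit the nonzero region, pull the induction $\circ\,P_{(s-q)}$ outside the $\GL_2(\Z)$-cohomology, and then read off the answer from Lemmas~\ref{lem:Fn} and~\ref{thm:GL}. Your explicit mention of the filtration splitting via semisimplicity is a small refinement over the paper, which simply asserts the direct sum decomposition.
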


\begin{proof}
 For all $r\geq 2$ the differential on the $r$-th page of the Leray-Serre spectral sequence has bidegree $(r, -r + 1)$.  Since only the first two columns are nonzero all differentials are too wide to be nonzero, so $E_2=E_\infty$ and
$$
H^k(\Gamma_{2,s})\iso H^0\big(\GL_2(\Z);H^k(F_2^{\,s})\big)\oplus H^1\big(\GL_2(\Z); H^{k-1}(F_2^{\,s})\big).
$$

By Lemmas~\ref{lem:Fn} and~\ref{thm:GL} we have
\begin{align*}
H^0\left(\GL_2(\Z);H^k(F_2^{\,s})\right) &\iso H^0\left(\GL_2(\Z);\wV{\h}{k}\circ P_{(s-k)}\right)\\
&\iso H^0\left(\GL_2(\Z);\wV{\h}{k}\right)\circ P_{(s-k)}\\
&\iso  \begin{cases} P_{(2^{2m})}\circ P_{(s-4m)} & \hbox{if } k=4m  \leq s\\
                                       0 &\hbox{otherwise,}\end{cases}
\end{align*}
where the second isomorphism holds because the
$\GL_2(\Z)$ action on $\wV{\h}{k}$ commutes with the $\SG_{k}\times \SG_{s-k}$ action and the $\SG_{s-k}$ action is trivial.

We calculate the irreducible decomposition of  $P_{(2^{2m})}\circ P_{(s-4m)}$ using the Pieri rule, which says that the components are obtained by adding $s-4m$ boxes in different columns to the Young diagram for $\lambda=(2^{2m})$. The only legal way to do this is to put $0$, $1$, or $2$ boxes in a new bottom row and add the rest to the first row. The resulting partitions are  $(s-4m-j,2^{2m-1},j)$ for $j=0,1,2$.

The second summand is
$$
H^1\big(\GL_2(\Z); H^{k-1}(F_n^{\,s})\big) \iso H^1\big(\GL_2(\Z);\wV{\h}{k-1}\big)\circ P_{(s-k+1)}.
$$
By Lemma~\ref{thm:GL}, $H^1(\GL_2(\Z);\wV{\h}{k-1})$ is  nonzero only when $k$ is odd, in which case we have identified it as an $\SG_{k-1}$-module which we named $W_{k-1}$. Inducing this up to $\SG_s$ produces $\binom{s}{k-1}$
copies of $W_{k-1}$, permuted by the action of $\SG_s$.

The first few rows and columns of the spectral sequence  look like this:
\begin{center} \begin{tikzpicture}
\draw (-1,0) to (8,0);
\draw (-.5,-.5) to (-.5,5.5);
\node (00) at (1,.5) {$P_{(s)}$};
\node (01) at (1,1.0) {$0$};
\node (02) at (1,1.5) {$0$};
\node (03) at (1,2.0) {$0$};
\node (04) at (1,2.5) {$P_{(2^2)}\circ P_{(s-4)}$};
\node (05) at (1,3) {$0$};
\node (06) at (1,3.5) {$0$};
\node (07) at (1,4) {$0$};
\node (08) at (1,4.5) {$P_{(2^4)}\circ P_{(s-8)}$};
\node (09) at (1,5) {$0$};
\node (10) at (4,.5) {$0$};
\node (11) at (4,1) {$0$};
\node (12) at (4,1.5) {$W_2\circ P_{(s-2)}$};
\node (13) at (4,2) {$0$};
\node (14) at (4,2.5) {$W_4\circ P_{(s-4)}$};
\node (15) at (4,3) {$0$};
\node (16) at (4,3.5) {$W_6\circ P_{(s-6)}$};
\node (17) at (4,4) {$0$};
\node (18) at (4,4.5) {$W_8\circ P_{(s-8)}$};
\node (19) at (4,5) {$0$};
\node (10) at (7,.5) {$0$};
\node (11) at (7,1) {$0$};
\node (12) at (7,1.5) {$0$};
\node (13) at (7,2) {$0$};
\node (14) at (7,2.5) {$0$};
\node (14) at (7,3) {$0$};
\node (14) at (7,3.5) {$0$};
\node (14) at (7,4) {$0$};
\node (14) at (7,4.5) {$0$};
\node (14) at (7,5) {$0$};
\end{tikzpicture}
\end{center}
Since $E_2=E_\infty$, the result follows.
\end{proof}

\begin{remark}
The dimension of $P_{(2^{2m})}$ can be computed by the hook-length formula (see, e.g.,~\cite{FH}); it is the $2m$-th Catalan number $C_{2m}=\frac{1}{2m+1} {\binom{4m}{2m}}$.
The induced representation $P_{(2^{2m})}\circ P_{(s-4m)}=H^{4m}(\Gamma_{2,s})$ consists of
$\binom{s}{4m}$
copies of this, so has  dimension  equal to
$$
\frac{1}{2m+1}
\binom{4m}{2m}\binom{s}{4m}
=\frac{s!}{(s-4m)!(2m+1)!(2m)!}.
$$
If $s\geq 4m+2$ then the irreducible decomposition of $P_{(2^{2m})}\circ P_{(s-4m)}$ obtained by the Pieri rule is
$$P_{(2^{2m})} \circ P_{(s-4m)} = P_{(s-4m+2,2^{2m-1})} \oplus P_{(s-4m+1,2^{2m-1},1)}\oplus P_{(s-4m,2^{2m})}
$$
\end{remark}

\smallskip
\begin{remark}
Using the decomposition of $W_q$ into irreducible $\SG_q$-modules in Remark~\ref{rem:WComputations} one  can  use the Pieri rule to obtain the decomposition of $H^{i}(\Gamma_{2,s})$ into irreducible $\SG_s$-modules  for odd $i$. For example
$$
H^7(\Gamma_{2,10}) = W_6\circ P_{(4)} = P_{(2,1^4)}\circ P_{(4)}=
P_{(6,1^4)} \oplus
P_{(5,2,1^3)} \oplus
P_{(5,1^5)} \oplus
P_{(4,2,1^4)}.
$$
\end{remark}

The  dimension and module structure of the cohomology of $\Gamma_{1,s}$ and $\Gamma_{2,s}$ for $s\leq 10$ are summarized in the tables at the end of the paper.

\begin{remark}
\label{rem:cusp}
The calculation of the map on cohomology induced by inclusion $P\to \GL_2(\Z)$ in Section~\ref{subsec:modular} together with the decomposition
$$
H^1\big(\GL_2(\Z);\wV{\h}{q}\big)  =
\bigoplus_{0\leq k\leq \frac{q}{2}} H^1\big(\GL_2(\Z);\Sym^{q-2k}\h \otimes {\det}^{k} \big)\otimes P_{(2^k,1^{q-2k})}.
$$
given in the proof of   Lemma~\ref{thm:GL}   shows that the image of the map
$$
H^1\big(\GL_2(\Z); \wV{\h}{q} \big) \longrightarrow H^1\big(P; \wV{\h}{q} \big)
$$
is isomorphic to $\displaystyle \bigoplus_{2k < q,\,\, k\,\, \mathrm{odd}} P_{(2^k,1^{q-2k})}$
 for $q$ even.  Combining with this with Theorem~\ref{thm:ranktwo} gives a projection
$$
H^i(\Gamma_{2,s}) \longrightarrow \bigoplus_{2k < i-1,\,\, k\,\, \mathrm{odd}} P_{(2^k,1^{i-2k-1})} \circ P_{(s-i+1)}
$$
for $i$ odd. This projection will be useful for constructing nice homology classes from cohomology classes, which we do in Section~\ref{subsec:Rank2odd}.
\end{remark}

\subsection{Arbitrary rank}
The representation theory we used to compute the cohomology of $\Gamma_{2,s}$ gives information about the cohomology of $\Gamma_{n,s}$ for all values of $n$.  In this section we show how this works.

\begin{theorem}
\label{thm:2mn}
If $s \geq n(2m+1)$  then $H^{2mn}(\Gamma_{n,s})$ contains the $\SG_s$-module $P_{(s-2mn,n^{2m})}$ as a direct summand
with multiplicity $1$.  In particular, $H^{2mn}(\Gamma_{n,s})\neq 0$ for all $s\geq 2mn+n$.
\end{theorem}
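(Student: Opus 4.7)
My plan is to apply the Leray--Serre spectral sequence~\eqref{eq:LS} and extract the desired summand from the bottom row $E_2^{0,2mn}$. The case $n=1$ is already contained in Proposition~\ref{prop:rankone}, where $P_{(s-2mn,n^{2m})}$ specializes to $P_{(s-2m,1^{2m})}=H^{2m}(\Gamma_{1,s})$, so I will assume $n\geq 2$.

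The first task is to identify the $\Out(F_n)$-invariants in $H^{2mn}(F_n^{\,s})$. By Lemma~\ref{lem:Fn} we have $H^{2mn}(F_n^{\,s})\iso\wH{2mn}\circ P_{(s-2mn)}$, and since $\Out(F_n)$ acts trivially on $P_{(s-2mn)}$,
$$E_2^{0,2mn}=H^0\bigl(\Out(F_n);\wH{2mn}\bigr)\circ P_{(s-2mn)}.$$
Schur--Weyl duality decomposes $\wH{2mn}$ under $\GL(\h)\times\SG_{2mn}$ as $\bigoplus_\lambda \SF{\lambda}\h\otimes P_{\lambda'}$, where $\lambda$ ranges over partitions of $2mn$ into at most $n$ parts. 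The $\Out(F_n)$-action factors through $\GL_n(\Z)$, and for $n\geq 2$ Borel density identifies the $\SL_n(\Z)$-invariants of $\SF{\lambda}\h$ with the $\SL_n$-invariants. These are nonzero precisely when $\lambda=(k^n)$, in which case $\SF{(k^n)}\h\iso{\det}^k$; an element of $\GL_n(\Z)$ with $\det=-1$ acts on this line by $(-1)^k$, so the full $\GL_n(\Z)$-invariants survive iff $k$ is even. For $|\lambda|=2mn$ the only solution is $\lambda=((2m)^n)$, yielding a multiplicity-one summand $P_{((2m)^n)'}=P_{(n^{2m})}$ in $H^0\bigl(\Out(F_n);\wH{2mn}\bigr)$.

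Next I apply the Pieri rule to $P_{(n^{2m})}\circ P_{(s-2mn)}$. Since $(n^{2m})$ is a $2m\times n$ rectangle, the $s-2mn$ new boxes form a horizontal strip, which can only lie in an extension of row~$1$ and/or in a new bottom row of length $b\leq n$. Taking $b=n$ yields $\mu=(s-2mn,n^{2m})$, which is a valid partition precisely when $s\geq n(2m+1)$. Thus $P_{(s-2mn,n^{2m})}$ appears as a multiplicity-one summand of $E_2^{0,2mn}$.

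The main step is to verify that this summand survives the spectral sequence and is not duplicated elsewhere along the diagonal $p+q=2mn$. The key combinatorial observation is the following: if $P_{(s-2mn,n^{2m})}$ appears in $P_\nu\circ P_{(s-q)}$ for some $\nu\vdash q$ with at most $n$ columns, then the horizontal-strip inequalities $\nu_i\geq\mu_{i+1}=n$ for $i=1,\dots,2m$ combined with $\nu_1\leq n$ force $\nu_i=n$ throughout, so $q=|\nu|\geq 2mn$. Each $E_2^{p,q}=H^p\bigl(\Out(F_n);\wH{q}\bigr)\circ P_{(s-q)}$ is a direct sum of such induced modules, hence $P_{(s-2mn,n^{2m})}$ does not appear in any $E_2^{p,q}$ with $q<2mn$. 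This rules out both the other diagonal entries $E_2^{p,2mn-p}$ with $p\geq 1$ and the targets $E_r^{r,2mn-r+1}$ of the outgoing differentials from $E_r^{0,2mn}$ for $r\geq 2$; by $\SG_s$-equivariance these differentials must then vanish on the $P_{(s-2mn,n^{2m})}$-isotypic component, so the summand descends intact to $H^{2mn}(\Gamma_{n,s})$. The ``in particular'' statement is immediate, since $n(2m+1)=2mn+n$. The only genuinely delicate part of the argument is this final combinatorial/equivariance check; the preceding steps are routine bookkeeping via Schur--Weyl and Pieri.
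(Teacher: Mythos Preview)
Your proof is correct and follows essentially the same approach as the paper's: locate $P_{(s-2mn,n^{2m})}$ in $E_2^{0,2mn}=P_{(n^{2m})}\circ P_{(s-2mn)}$ via Schur--Weyl and Pieri, then show it cannot occur in any $E_2^{p,q}$ with $q<2mn$ so that the $\SG_s$-equivariant differentials kill nothing and no extra copies appear along the diagonal. The only noteworthy difference is in that last combinatorial check: you use the horizontal-strip inequalities $\nu_i\geq\mu_{i+1}=n$ together with $\nu_1\leq n$ to force $\nu\supseteq(n^{2m})$, whereas the paper observes more directly that any constituent of $P_{\lambda'}\circ P_{(s-q)}$ has first row of length at least $s-q>s-2mn$ (since at most $(\lambda')_1\leq n$ of the added boxes land in existing columns, so the rest extend row~$1$). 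Both arguments are short and valid; the paper's is slightly more elementary, while yours makes the Pieri interlacing explicit. Your explicit appeal to Borel density for the $\GL_n(\Z)$-invariant computation is a reasonable way to justify what the paper leaves implicit.
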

\begin{proof}
The
 $E_2$ term  of the spectral sequence~\eqref{eq:LS} is $H^p\big(\Out(F_n);H^q(F_n^{\,s})\big)$. The $p=0$ column is straightforward to calculate because it is simply a calculation of $\GL_n(\Z)$ invariants of a well-understood module. The other columns consist of groups that are not known, so our strategy will be to look for $\SG_s$-representations in the $p=0$ column that cannot appear in the other columns. Such a representation cannot support a nontrivial differential, as all differentials are $\SG_s$-equivariant, so survives to $E_\infty$ and hence to $H^*(\Gamma_{n,s})$.

The action of $\Outn$ on $\h=H^1(F_n)\iso \F^n$ factors through the usual action of $\GL_n(\Z)$ on $\h$, and as before, using Lemma~\ref{lem:Fn} we have
\begin{align*}
H^0\big(\Out(F_n);H^q(F_n^{\,s})\big)&=H^0\left(\Out(F_n);\wV{\h}{q}\circ P_{(s-q)}\right)\\
&=H^0\left(\Out(F_n);\wV{\h}{q}\right)\circ P_{(s-q)}\\
&=H^0\left(\GL_n(\Z);\wV{\h}{q}\right)\circ P_{(s-q)}.
\end{align*}

By Schur-Weyl duality, $\wV{\h}{q} \iso \bigoplus_{|\lambda|=q} \SF{\lambda}\h\otimes P_{\lambda^\prime} $, where $ \SF{\lambda}\h$ is the irreducible $\GL_n$-representation corresponding to $\lambda$. It follows from the character formula~\cite[Theorem 6.3]{FH},
that $\SF{\lambda} \h$ is $1$-dimensional if and only if  $q$ is a multiple of $n$, say $q=kn$ and $\lambda = (k^n)$. In this case $\SF{(k^n)}\h$  is the $1$-dimensional $\GL_n(\F)$-representation which is the $k$th power of the determinant (the Schur polynomial is $S_{k^n}=(x_1x_2\ldots x_n)^k$).  Thus  $\SF{\lambda}\h$ is a trivial $\GL_n(\Z)$-module only when $q=2mn$ is an even multiple of $n$   and  $\lambda=((2m)^n)$.
We conclude that
$H^0\big(\Out(F_n);H^q(F_n^{\,s})\big)=0$  unless $q=2mn$
in which case we have
$$
H^0\big(\Out(F_n);H^{2mn}(F_n^{\,s})\big)=  P_{((2m)^n)^\prime}\circ P_{(s-2mn)}= P_{(n^{2m})}\circ P_{(s-2mn)}.
$$

Using the Pieri rule to decompose this representation, we see that as long as $s-2mn\geq n$ one of the terms we get is $P_{(s-2mn,n^{2m})}$, obtained by adding one box below each existing column and the rest to the right of the first row; this is illustrated in Figure~\ref{fig:induced}.
\begin{figure}[b]
\begin{center}
 \begin{tikzpicture}[scale=.4]
 \draw (0,0) to (3,0);
 \draw (0,1) to (3,1);
 \draw (0,2) to (3,2);
 \draw (0,3) to (3,3);
 \draw (0,4) to (3,4);
 \draw (0,0) to (0,4);
 \draw (1,0) to (1,4);
 \draw (2,0) to (2,4);
 \draw (3,0) to (3,4);
 \draw[decorate,decoration={brace,  amplitude=4pt}] (0,4.3) -- (3,4.3);
 \node (n) at (1.5,5.2) {$n$};
 \draw[decorate,decoration={brace,  amplitude=4pt}] (-.3,0) -- (-.3,4);
 \node (2m) at (-1.5,2) {$2m$};
\begin{scope} [xshift=10cm]
 \draw (0,-1) to (3,-1);
 \draw (0,0) to (3,0);
 \draw (0,1) to (3,1);
 \draw (0,2) to (3,2);
 \draw (0,3) to (7,3);
 \draw (0,4) to (7,4);
 \draw (0,-1) to (0,4);
 \draw (1,-1) to (1,4);
 \draw (2,-1) to (2,4);
 \draw (3,-1) to (3,4);
 \draw (4,3) to (4,4);
  \draw (5,3) to (5,4);
   \draw (6,3) to (6,4);
    \draw (7,3) to (7,4);
\fill [red] (.5,-.5) circle (.075);
\fill [red] (1.5,-.5) circle (.075);
\fill [red] (2.5,-.5) circle (.075);
\fill [red] (3.5,3.5) circle (.075);
\fill [red] (4.5,3.5) circle (.075);
\fill [red] (5.5,3.5) circle (.075);
\fill [red] (6.5,3.5) circle (.075);
 \end{scope}
 \end{tikzpicture}
 \end{center}
\caption{Adding boxes to $(n^{2m})$ to obtain the Young diagram for one term of the induced module $P_{(n^{2m})}\circ P_{(s-2mn)}$}
\label{fig:induced}
\end{figure}
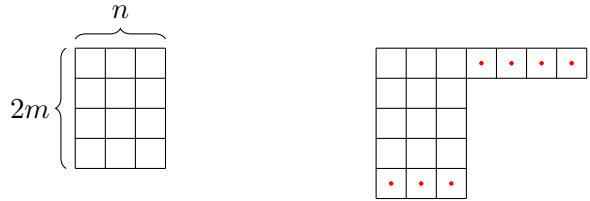

We have shown $P_{(s-2mn,n^{2m})}$ occurs in $E^{0,2mn}_2$.  We now claim it does not appear in any row below the $2mn$-th row, so that all differentials from $P_{(s-2mn,n^{2m})}$ must vanish, and $P_{(s-2mn,n^{2m})}$ survives in $H^*(\Gamma_{n,s})$.  Since
\begin{align*}
E_2^{p,q}&=H^p\big(\Outn, H^q(F_n^s)\big)\\
&=H^p\left(\Outn, \wV{\h}{q}\circ P_{(s-q)}\right)\\
&=\bigoplus_{|\lambda|=q} H^p\left(\Outn, \SF{\lambda}\h\otimes P_{\lambda^\prime}\circ P_{(s-q)}\right)\\
&=\bigoplus_{|\lambda|=q} H^p\left(\Outn, \SF{\lambda}\h\right)\otimes P_{\lambda^\prime}\circ P_{(s-q)},
\end{align*}
it suffices to show that $P_{(s-2mn,n^{2m})}$ cannot occur as a term in any of the induced modules $P_{\lambda^\prime}\circ P_{(s-q)}$ with $|\lambda|=q <2mn$. The is the case because the first row of any diagram appearing in $P_{\lambda^\prime}\circ P_{(s-q)}$ has length
at least $s-q > s-2mn$.
\end{proof}

\begin{remark}
The module $P_{(s-2mn,n^{2m})}$ used in the above proof is only a tiny piece of $E^{0,2mn}_2=H^0\big(\Out(F_n);H^q(F_n^{\,s})\big)$.   It seems likely that a much larger part  survives to infinity in the spectral sequence and thus  contributes to the cohomology of $\Gamma_{n,s}$.
\end{remark}

\section{Subgroups supporting homology classes in $\Gamma_{1,s}$ and $\Gamma_{2,s}$.}

In later sections of the paper it will be more natural to work with homology than cohomology.  The universal coefficient theorem formally allows us to do this, since we have finite-dimensional homology groups and coefficients in a field.  In this section we show that in rank 1 and 2 we can also describe some homology classes more directly as classes supported on certain easily understood subgroups.

\subsection{Rank one} \label{subsec:alpha_k} The situation for rank 1 is quite simple so we
describe this first.  By Proposition~\ref{prop:rankone} the odd-dimensional cohomology of $\Gamma_{1,s}$ vanishes and
the inclusion of $\Z^{s-1}$ into $\Gamma_{1,s}$ induces an isomorphism on cohomology in even degrees, hence this holds also for homology.  This implies that the top homology class of any subgroup of even rank in
$\Z^{s-1}$ maps to a nontrivial class in $H_*(\Gamma_{1,s})$, and $H_*(\Gamma_{1,s})$ has a basis of such classes.
If $s=2k+1$ then the entire subgroup $\Z^{s-1}=\Z^{2k}$ has even rank and its top homology class maps to a nontrivial class
$\alpha_k\in H_{2k}(\Gamma_{1,2k+1})$, which is well-defined up to sign.  The class $\alpha_k$ will be used to construct the Morita class $\mu_k$ in Section~\ref{subsec:Morita}.

\subsection{Rank two, even homology degree} Now we turn to rank 2, where $H_*(\Gamma_{2,s})$ is considerably more complicated.
This extra complication is relatively mild in even degrees, so we examine those first.  It suffices to consider $H_{4k}(\Gamma_{2,s})$ since $H_{4k+2}(\Gamma_{2,s})=0$ by Theorem~\ref{thm:ranktwo}.

\begin{notation} Throughout this section and the next we fix generators $x$ and $y$ for $F_2$ and we let $\x$ and $\y$ denote their images in $H_1(F_2)$, with  $\x^*$ and $\y^*$  the dual basis of $H^1(F_2)$.  We also set $\hone=H_1(F_2)$; the notation is meant to distinguish it from $\h=H^1(F_2)$.
\end{notation}

For disjoint subsets $I$ and $J$ of $\{1,2,\ldots,s\}$ let $A_{I,J}$ be the abelian subgroup of $F_2^s$ consisting of $s$-tuples with powers of $x$ in the $I$ coordinates, powers of $y$ in the $J$ coordinates, and the identity in the other coordinates.  We have inclusions $A_{I,J}\subset F_2^s\subset\Gamma_{2,s}$, and we let $\alpha_{I,J}\in H_*(\Gamma_{2,s})$ be the image of a generator of the top-dimensional homology of $A_{I,J}$.

\begin{proposition}\label{prop:alphaIJ}
If $|I|=|J|=2k$ for some $k$ then the class $\alpha_{I,J}\in H_{4k}(\Gamma_{2,s})$ is nonzero and these classes $\alpha_{I,J}$ generate $H_{4k}(\Gamma_{2,s})$.
\end{proposition}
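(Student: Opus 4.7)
The plan is to apply the homological Leray--Serre spectral sequence for the extension $1\to F_2^{\,s}\to\Gamma_{2,s}\to\GL_2(\Z)\to 1$. Exactly as in the proof of Theorem~\ref{thm:ranktwo}, the sequence degenerates at $E^2$; in total degree $4k$ the only surviving term is $E^\infty_{0,4k}=H_0(\GL_2(\Z);H_{4k}(F_2^{\,s}))$, because $H_1(\GL_2(\Z);\wV{\hone}{4k-1})$ vanishes by Lemma~\ref{thm:GL} (the coefficient degree $4k-1$ is odd, and the homology statement follows from the cohomology one by self-duality of finite-dimensional $\F$-representations). The edge map induced by the inclusion $F_2^{\,s}\hookrightarrow\Gamma_{2,s}$ therefore gives an isomorphism $H_{4k}(\Gamma_{2,s})\iso H_0(\GL_2(\Z);H_{4k}(F_2^{\,s}))$, and $\alpha_{I,J}$ is identified with the image in the $\GL_2(\Z)$-coinvariants of its class in $H_{4k}(F_2^{\,s})$.

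By K\"unneth, $H_{4k}(F_2^{\,s})$ decomposes as a direct sum of copies of $\wV{\hone}{4k}$, one for each subset $T\subseteq\{1,\dots,s\}$ of size $4k$. The inclusion $A_{I,J}\hookrightarrow F_2^{\,s}$ carries the top class of $A_{I,J}$ to the monomial tensor $\omega_{I,J}$ in the $T=I\cup J$ summand, namely the element with $\x$ placed at the coordinates in $I$ and $\y$ at those in $J$. Thus the proposition reduces to showing (a) the image of $\omega_{I,J}$ in the coinvariants is nonzero and (b) these images together span the coinvariants.

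For (a), which is the technical heart of the argument, I would reason as follows. By Schur--Weyl and Lemma~\ref{lem:decomp}, $\wV{\hone}{4k}\iso\bigoplus_{j=0}^{2k}\Sym^{4k-2j}\hone\otimes\det^{j}\otimes P_{(2^{j},1^{4k-2j})}$, and the $\GL_2(\Z)$-coinvariants are concentrated in the $j=2k$ summand $P_{(2^{2k})}$, since $\Sym^{r}\hone$ has no $\SL_2(\Z)$-coinvariants for $r>0$. Now $\omega_{I,J}$ lies in the $(2k,2k)$-weight space of the diagonal torus of $\GL_2$, and under the sign-twisted $\SG_{4k}$-action this weight space is precisely the signed permutation representation on the basis $\{\omega_{I',J'}:I'\cup J'=I\cup J\}$. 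Because the underlying (unsigned) permutation action is transitive on this basis, the representation is cyclic; consequently the $\SG_{4k}$-equivariant projection onto its irreducible $P_{(2^{2k})}$ summand (which appears with multiplicity one) cannot vanish on any cyclic generator, and in particular $\omega_{I,J}$ maps to a nonzero element of $P_{(2^{2k})}$.

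Finally, for (b), the pairs $(I,J)$ with $|I|=|J|=2k$ form a single $\SG_s$-orbit. Fixing $T_0=\{1,\dots,4k\}$, the images of the $\alpha_{I,J}$ with $I\cup J=T_0$ generate an $\SG_{4k}$-submodule of the $T_0$-summand that surjects onto $P_{(2^{2k})}$ by (a) and irreducibility; after tensoring with $P_{(s-4k)}$ and inducing along $\SG_{4k}\times\SG_{s-4k}\leq\SG_s$, this yields the full module $P_{(2^{2k})}\circ P_{(s-4k)}=H_{4k}(\Gamma_{2,s})$ by Theorem~\ref{thm:ranktwo}. The main obstacle is step (a): one must carefully manage the sign twist in $\wV{\hone}{4k}$ when pinning down the $(2k,2k)$-weight space, and then convert irreducibility of the $P_{(2^{2k})}$ summand into nonvanishing on an individual basis vector via cyclicity.
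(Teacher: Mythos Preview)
Your argument is correct, and it takes a route that is dual to the paper's but genuinely different in execution. The paper proves nonvanishing by working in cohomology: it writes down the explicit $\GL_2(\Z)$-invariant $(\omega^*)^{\otimes 2k}\in H^{4k}(F_2^{4k})$, observes that it (or a permutation of it) evaluates nontrivially on the monomial $\x\otimes\y\otimes\cdots\otimes\x\otimes\y$, and then passes to general $s$ via the splittings $\Gamma_{2,4k}\hookrightarrow\Gamma_{2,s}$. You instead stay entirely on the homology side, identifying $H_{4k}(\Gamma_{2,s})$ with the $\GL_2(\Z)$-coinvariants of $H_{4k}(F_2^{\,s})$ and showing that the monomial $\omega_{I,J}$ survives the projection by a structural argument: the $(2k,2k)$-weight space is the induced module $P_{(1^{2k})}\circ P_{(1^{2k})}$, hence cyclic on each basis vector, so the equivariant projection to its $P_{(2^{2k})}$ summand cannot kill $\omega_{I,J}$.

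What each approach buys: the paper's pairing argument is completely explicit and requires no analysis of weight-space decompositions --- one just matches a monomial against the leading term of $(\omega^*)^{\otimes 2k}$. Your approach avoids the homology/cohomology switch and the explicit invariant altogether, replacing them with a clean representation-theoretic principle (a nonzero equivariant map out of a cyclic module cannot vanish on a generator). This principle is more portable and would adapt to situations where the invariants are not so easy to write down. One small point worth making precise in your write-up: the ``edge map'' you invoke is really the composite $H_{4k}(F_2^{\,s})\twoheadrightarrow\bigl(H_{4k}(F_2^{\,s})\bigr)_{\GL_2(\Z)}\xrightarrow{\ \sim\ }H_{4k}(\Gamma_{2,s})$, the second arrow being an isomorphism because the only other potentially nonzero term $E^2_{1,4k-1}$ vanishes; this is exactly what you state, but phrasing it as the inclusion-induced map giving an isomorphism slightly obscures that the first step is a quotient.
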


\begin{proof}
First we show that $\alpha_{I,J}$ is nonzero when $|I|=|J|=2k$ for some $k\geq 1$. (Here $s\geq 4k$ since $A_{I,J}\subset F_2^s$.)  We do this by finding a cohomology class in $H^{4k}(\Gamma_{2,s})$ that pairs nontrivially with $\alpha_{I,J}$.

By Lemma~\ref{thm:GL} we have $H^1(\GL_2(\Z);\wH{4k-1})=0$.  Therefore
$$
H^1\big(\GL_2(\Z);H^{4k-1}(F_2^s)\big)=H^1\big(\GL_2(\Z);\wH{4k-1}\big)\circ P_{(s-4k)}=0,
$$
and
$$
H^{4k}(\Gamma_{2,s})\iso H^0\big(\GL_2(\Z);H^{4k}(F_2^s)\big) \iso H^0\big(\GL_2(\Z);\wH{4k}\big)\circ P_{(s-4k)}.
$$
Thus to compute $H^{4k}(\Gamma_{2,s})$ as an $\SG_s$-module it suffices to understand the invariants of the diagonal action of $\GL_2(\Z)$ on $\wH{4k}$.  As a  $\GL_2(\Z)$-module, $\wH{4k}$ is the same as $\tH{4k}$, and we describe the (classical) answer below.

A straightforward calculation shows that the diagonal action of an element $T \in \GL_2(\Z)$ on $\h\otimes \h$ sends $\omega^* = \x^*\otimes \y^*-\y^*\otimes \x^*$ to $(\det T)\cdot\omega^*$.  The diagonal action on $\h^{\otimes 4k}$ sends $(\omega^*)^{\otimes 2k}$ to $(\det T)^{2k}(\omega^*)^{\otimes 2k}$, so since $\det T=\pm 1$ and $2k$ is even this  is an invariant.  Any permutation of the indices $\{1,\ldots,4k\}$ produces another invariant, and the invariants defined in this way span the entire space of invariants (see, e.g.,~\cite{FH} for details).  Note that each term in each of these invariants has an equal number of $\x^*$'s and $\y^*$'s, so this is true of any invariant.

Suppose first that $s=4k$ and let $I= \{1,3,\ldots,4k-1\}$, the odd indices, and $J=\{2,4,\ldots,4k\}$, the even indices.  Then the image  of $H_{4k}(A_{I,J})\iso H_1(\Z)\otimes\cdots\otimes H_1(\Z)$ in $H_1(F_2^{4k})\iso \hone \otimes\cdots\otimes \hone$ is generated by ${\bf z}= \x\otimes \y\otimes\cdots \otimes \x\otimes \y$.  Since this matches the first term of $(\omega^*)^{\otimes 2k}$, the cohomology class $(\omega^*)^{\otimes 2k}\in H^{4k}(\Gamma_{2,4k})$ pairs nontrivially with $\alpha_{I,J}$, which is the image of ${\bf z}$ in $H_{4k}(\Gamma_{2,4k})$.  This shows that $\alpha_{I,J}$ is nonzero.  Permuting the indices produces other nonzero classes $\alpha_{I,J}$ that span $H_{4k}(\Gamma_{2,4k})$ since the corresponding cohomology classes span $H^{4k}(\Gamma_{2,4k})$.

If $s>4k$, any of the natural inclusions $\Gamma_{2,4k}\to \Gamma_{2,s}$ (given by gluing extra leaves to the leaf vertices of $\Gr_{2,4k}$ and extending maps by the identity) induces an injection $H_{4k}(\Gamma_{2,4k})\to H_{4k}(\Gamma_{2,s})$ mapping each $\alpha_{I,J}$  nontrivially. On homology, the image of this map depends only on the inclusion $\{1,\ldots,2k\}\to \{1,\ldots,s\}$ of leaf vertices.  Since $H_{4k}(\Gamma_{2,s})=H_{4k}(\Gamma_{2,4k})\circ P_{(s-4k)}$, these classes span all of $H_{4k}(\Gamma_{2,s})$.
\end{proof}

\begin{remark} \label{remark:alpha-unequalsize}
If $I$ and $J$ are disjoint subsets of $\{1,\ldots,s\}$ of
 different size, then $\alpha_{I,J}$ is trivial because the top-dimensional homology class of $A_{I,J}$ is a simple tensor with an unequal number of $\x$'s and $\y$'s, so every invariant evaluates trivially on it.
\end{remark}

\begin{remark}
\label{remark:alpha-basis}
The classes $\alpha_{I,J} \in H_{4k}(\Gamma_{2,s})$ are not linearly independent.  There are several possible ways to obtain a subset of these classes which form  a basis of $H_{4k}(\Gamma_{2,s})$.
Since the dimension of $H_{4k}(\Gamma_{2,s})$ is closely related to the dimension of $P_{(2k,2k)}$,
which is equal to the Catalan number $C_{2k}$, one can use  combinatorial objects such as non-crossing partitions or Young tableaux  to describe such a basis.  Here is one possible description of a basis.

\begin{claim}
The space $H_{4k}(\Gamma_{2,s})$ has a basis consisting of those $\alpha_{I,J} $ for which
$I= \{i_1 < i_2 <  \dots < i_{2k} \}$ and $J= \{j_1 < j_2 <  \dots < j_{2k} \}$ are disjoint subsets of $\{1,\dots,s\}$ such that $i_t < j_t$ for each $t=1,\dots,2k$.
\end{claim}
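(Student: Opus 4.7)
The plan is to combine a dimension count with a linear-independence argument, reducing to the case $s=4k$ and then pairing with a basis of classical invariants on the cohomology side. First I would count the standard pairs. For each $4k$-subset $S=I\cup J$ of $\{1,\dots,s\}$, reading the elements of $S$ in increasing order and marking them ``open'' or ``close'' according as they lie in $I$ or $J$ turns the condition $i_t<j_t$ into the Dyck condition that every prefix contains at least as many opens as closes, so the number of standard pairs with $I\cup J=S$ is the Catalan number $C_{2k}$. Multiplying by $\binom{s}{4k}$ gives $\binom{s}{4k}C_{2k}=\dim H_{4k}(\Gamma_{2,s})$ by the remark after Theorem~\ref{thm:ranktwo}. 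Since Proposition~\ref{prop:alphaIJ} already shows $\{\alpha_{I,J}\}$ spans, it suffices to show that the standard classes are linearly independent.

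Next I would reduce to $s=4k$. For each $4k$-subset $S\subseteq\{1,\dots,s\}$, extending homotopy equivalences by the identity on the complementary leaves gives an inclusion $\iota_S\colon\Gamma_{2,4k}\hookrightarrow\Gamma_{2,s}$ that carries $\alpha_{I,J}$ (in $\Gamma_{2,4k}$) to $\alpha_{\iota_S(I),\iota_S(J)}$ (in $\Gamma_{2,s}$) via the increasing bijection $\{1,\dots,4k\}\to S$, and this preserves standardness. Assembling these gives a surjective map $\bigoplus_S H_{4k}(\Gamma_{2,4k})\to H_{4k}(\Gamma_{2,s})$ between spaces of equal dimension, hence an isomorphism realizing the induction $H_{4k}(\Gamma_{2,4k})\circ P_{(s-4k)}$ of Theorem~\ref{thm:ranktwo}. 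So it suffices to prove the standard classes form a basis of $H_{4k}(\Gamma_{2,4k})$.

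For $s=4k$, the spectral-sequence collapse identifies $H_{4k}(\Gamma_{2,4k})=(\wV{\hone}{4k})_{\GL_2(\Z)}$ and $H^{4k}(\Gamma_{2,4k})=(\wV{\h}{4k})^{\GL_2(\Z)}$, both of dimension $C_{2k}$, with $\alpha_{I,J}$ the image of the monomial $v_{I,J}=v_1\otimes\cdots\otimes v_{4k}$ ($v_t=\x$ if $t\in I$, $v_t=\y$ if $t\in J$). The First Fundamental Theorem of classical invariant theory for $\GL_2$, combined with the identity $T\cdot\omega^*=(\det T)\omega^*$ noted in the proof of Proposition~\ref{prop:alphaIJ}, exhibits a basis of $(\wV{\h}{4k})^{\GL_2(\Z)}$ by tensors $\omega^*_M=\bigotimes_{\{a,b\}\in M,\,a<b}(\x^*_a\y^*_b-\y^*_a\x^*_b)$ indexed by non-crossing perfect matchings $M$ of $\{1,\dots,4k\}$. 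The Dyck bijection identifies non-crossing matchings with standard pairs (``opens''$\mapsto I_M$, ``closes''$\mapsto J_M$), and a direct computation yields $\langle\omega^*_{M_{I,J}},\alpha_{I,J}\rangle=+1$ together with $\langle\omega^*_M,\alpha_{I',J'}\rangle=0$ whenever some pair of $M$ is monochromatic with respect to $(I',J')$.

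The hard part will be verifying that the resulting $C_{2k}\times C_{2k}$ pairing matrix is nondegenerate. I expect that ordering standard pairs by the ``nesting depth'' of their associated non-crossing matchings (refined to any total order) makes this matrix lower unitriangular with $+1$'s on the diagonal, using the combinatorial observation that $M_{I',J'}$ can respect a coloring $(I,J)$ bichromatically only when $(I,J)$ is at least as nested as $(I',J')$. The case $k=1$ already illustrates this: with rows and columns ordered by the matchings $\{(1,2),(3,4)\}$ then $\{(1,4),(2,3)\}$, the matrix is $\bigl(\begin{smallmatrix}1&0\\-1&1\end{smallmatrix}\bigr)$. As a backup, one can invoke Young's standard-basis theorem for the Specht module $S^{(2k,2k)}$: the column-antisymmetrized polytabloids $e_T$ indexed by standard Young tableaux $T$ of shape $(2k,2k)$ (which biject canonically with standard pairs) form a basis, and the expansion $e_T=v_T+\sum_{A\neq\emptyset}(-1)^{|A|}v_{I_A,J_A}$ over subsets $A$ of columns to flip can be straightened via Garnir-style relations to produce an invertible triangular change of basis between $\{v_T\}$ and $\{e_T\}$ inside the $P_{(2k,2k)}$-isotypic component of $\hone^{\otimes 4k}$.
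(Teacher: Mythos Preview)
The paper does not prove this Claim; Remark~\ref{remark:alpha-basis} states it and adds only that ``the proof of this involves a deeper use of representation theory so we will not give it here.'' Your proposal supplies exactly such an argument, and it is correct.

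The dimension count and the reduction to $s=4k$ are straightforward as you describe. For the triangularity you were hedging on, here is the missing combinatorial observation. Suppose every pair of the non-crossing matching $M$ is bichromatic for the standard pair $(I',J')$, and fix $t\in\{1,\dots,4k\}$. Each pair of $M$ lying entirely in $[1,t]$ or entirely in $(t,4k]$ contributes $0$ to $|I'\cap[1,t]|-|J'\cap[1,t]|$, while each pair straddling $t$ contributes $\pm1$; hence this quantity is at most the number of straddling pairs, which equals the height of the Dyck path of $(I_M,J_M)$ at $t$. Thus the Dyck path of $(I',J')$ lies weakly below that of $(I_M,J_M)$, with equality only when $(I',J')=(I_M,J_M)$. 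Any linear extension of this Dyck-path partial order therefore makes the pairing matrix lower unitriangular, confirming your expectation and matching your $k=1$ computation. Your backup via the Specht module $S^{(2k,2k)}$ is also valid and amounts to the same triangularity in a different guise: identifying $(I,J)$ with the tableau of shape $(2k,2k)$ having first row $I$ and second row $J$, the condition $i_t<j_t$ is exactly column-standardness.
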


The proof of this involves a deeper use of representation theory so we will not give it here.
\end{remark}

\subsection{Rank two, odd homology degree} \label{subsec:Rank2odd}
Constructing classes of odd homology degree is more difficult since no subgroups of $F_2^s$ support such classes.  As a result  we must use slightly more complicated subgroups of $\Gamma_{2,s}$. We use the same notation as in the previous section for generators of $F_2$ and its homology and cohomology.

Fix disjoint subsets $I, J \subset \{1,\dots,s\}$.
Let $B_{I,J} \iso F_2^{|I|}\times \Z^{|J|}$
be the subgroup of $F_2^s$ consisting of $s$-tuples with arbitrary elements in the $I$ coordinates, powers of $x$ in the $J$-coordinates and the identity in coordinates not indexed by $I$ or $J$.
Recall that $\Gamma_{2,s}$ maps onto $\Gamma_{2,0}\iso \GL(2,\Z)$ with kernel $F_2^s$.  Let $P\iso\Z$ denote the unipotent subgroup of $\GL_2(\Z)$ generated by $\left(\!\begin{smallmatrix} 1 & 1 \\ 0 & 1 \end{smallmatrix}\!\right)$, corresponding to the (outer) automorphism of $F_2$ fixing $x$  and sending $y$ to $xy$.  Lift the generator of $P$ to an element $\varphi \in \Gamma_{2,s}$ that wraps the $y$-loop of $X_{2,s}$ around both itself and the $x$-loop, and fixes the $x$-loop and all leaves.  This normalizes $B_{I,J}$, and we  define $M_{I,J}$ to be the subgroup of $\Gamma_{2,s}$ generated by $\varphi$ and $B_{I,J}$.  We now have a commutative diagram
$$
\begin{tikzpicture} 
\matrix (m)[matrix of math nodes, row sep=2em, column sep=2.5em, text height=1.5ex, text depth=0.25ex]
{
1 &  F_2^{s} &  \Gamma_{2,s} & \GL_2(\Z) & 1 \\
1 & B_{I,J}      &  M_{I,J}              & P
& 1.\\
};
\draw[->] (m-1-1) edge (m-1-2);
\draw[->] (m-1-2) edge (m-1-3);
\draw[->] (m-1-3) edge (m-1-4);
\draw[->] (m-1-4) edge (m-1-5);
\draw[->] (m-2-1) edge (m-2-2);
\draw[->] (m-2-2) edge (m-2-3);
\draw[->] (m-2-3) edge (m-2-4);
\draw[->] (m-2-4) edge (m-2-5);
\draw[right hook->] (m-2-2) edge (m-1-2);
\draw[right hook->] (m-2-3) edge (m-1-3);x
\draw[right hook->] (m-2-4) edge (m-1-4);
\end{tikzpicture}
$$
Note that $M_{I,J}$ splits as the product $M_I\times \Z^{|J|}$ where $M_{I}=M_{I,\varnothing}$ and $\Z^{|J|}=B_{\varnothing,J}$.

We will be interested in the cases when $|I|$ and $|J|$ are even, and we let $|I|+|J|=2k$. The top-dimensional homology of $B_{I,J}$ is $H_{2k}(B_{I,J})\iso{\hone}^{\otimes |I|}\otimes {\Xone}^{\otimes |J|}$, where
$\Xone\iso \F$ is the subspace of $\hone=H_1(F_2)$ spanned by $\x$.
From the Leray-Serre spectral sequence it follows that $H_*(M_{I,J})$ vanishes above dimension $2k+1$ and $H_{2k+1}(M_{I,J})=H_1\big(P;H_{2k}(B_{I,J})\big)$.

\subsubsection{The case $|I|$=2.}  The analysis of the case $|I|=2$ is  easier than the general case and  will suffice for our construction of the Eisenstein classes in Section~\ref{subsec:eisenstein}, so we begin with this case.

We first compute $H_{2k+1}(M_{I,J})$.  From the splitting $M_{I,J} = M_I\times \Z^{|J|}$ we have
$$
H_{2k+1}(M_{I,J})\iso H_3(M_I)\otimes H_{2k-2}(\Z^{2k-2}) \iso H_3(M_I) \iso H_1\big(P;H_2(B_{I,\varnothing})\big).
$$
Since $P\iso\Z$ the first homology $H_1\big(P;H_2(B_{I,\varnothing})\big)$ is just the invariants of the action of $P$ on $H_2(B_{I,\varnothing})=\hone\otimes \hone$.  This is the diagonal action, where  $P$  acts on $\hone$ by sending $\x\to \x$ and $\y\to \x+\y$.  It is easy to compute that the space of invariants  is $2$-dimensional, spanned by $\x\otimes\x$ and $\omega=\x\otimes\y -\y\otimes\x$.  Thus
 $$
 H_{2k+1}(M_{I,J})\iso H_1\big(P;H_{2k}(B_{I,J})\big)\iso [H_{2k}(B_{I,J})]^P\iso [{\hat H}^{\otimes 2}\otimes {\Xone}^{\otimes 2k-2}]^P \iso\F^2
$$
with basis $\x^{2k}$ and $\omega\otimes\x^{2k-2}$.

The
class in $H_{2k+1}(M_{I,J})$ corresponding to $\omega\otimes\x^{2k-2}$ is the one whose image $m_{I,J}$ in $H_{2k+1}(\Gamma_{2,s})$
will be used
as a building block for Eisenstein classes.  In Section~\ref{subsec:geoEisenstein}  we give a different, more geometric construction of this class as the image of the fundamental class of a manifold mapped into a moduli space of graphs.

The natural actions of $\SG_I=\SG_2$ and $\SG_J=\SG_{2k-2}$ on $H_{2k+1}(M_{I,J})$ are easy to describe since these two symmetric groups act separately on the factors of the splitting $M_{I,J}=M_I\times \Z^{|J|}$.  For $\SG_I$ the transposition $\sigma$ interchanges the $F_2$ factors of $F_2^2$, so $\sigma(\x\otimes\x)=-\x\otimes\x$ since the cross product in the K\"unneth formula is anti-symmetric. For the class $\omega=\x\otimes\y -\y\otimes\x$ we have $\sigma(\omega)=\omega$ since  we get one minus sign from the minus sign in $\omega$ and another from anti-symmetry in the K\"unneth formula. For an element $\sigma\in\SG_J$ the action on $\x^{2k-2}$ is just by the sign of $\sigma$.

Because of the anti-symmetric action of $\SG_J$, the classes $m_{I,J}\in H_{2k+1}(\Gamma_{2,s})$ are well-defined only up to sign.  We now show they are nontrivial and describe how much of $H_{2k+1}(\Gamma_{2,s})$ they account for.

\begin{proposition}\label{prop:oddtwo}
For $I, J \subset \{1,\dots,s\}$ with $|I|=2$ and $|J|=2k-2\geq 2$
the map $H_{2k+1}(M_{I,J}) \to H_{2k+1}(\Gamma_{2,s})$ induced  by inclusion has $1$-dimensional image spanned by  $m_{I,J}$, and the $\SG_s$-module generated by $m_{I,J}$ is isomorphic to $P_{(2,1^{2k-2})} \circ P_{(s-2k)}$.
\end{proposition}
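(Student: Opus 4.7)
The plan is to compare the Leray-Serre spectral sequences of the extensions $1\to B_{I,J}\to M_{I,J}\to P\to 1$ and $1\to F_2^{\,s}\to\Gamma_{2,s}\to\GL_2(\Z)\to 1$, and then detect the two basis classes of $H_{2k+1}(M_{I,J})$ via Schur-Weyl duality combined with the cusp form versus modular form analysis of Section~\ref{subsec:modular}. First I would observe that, since $H_0(\GL_2(\Z);\hone^{\wedge(2k+1)})$ vanishes (the homology dual of the vanishing part of Lemma~\ref{thm:GL}, using that every Schur-Weyl summand $\Sym^r\hone\otimes\det^\ell$ with $r\ge 1$ is a nontrivial irreducible of $\GL_2(\Z)$), the spectral sequence for $\Gamma_{2,s}$ collapses onto $H_1(\GL_2(\Z);H_{2k}(F_2^{\,s}))$ in total degree $2k+1$, and the inclusion $M_{I,J}\hookrightarrow\Gamma_{2,s}$ induces a map
\[
[H_{2k}(B_{I,J})]^P = H_1(P;H_{2k}(B_{I,J}))\longrightarrow H_1(\GL_2(\Z);H_{2k}(F_2^{\,s})) = H_{2k+1}(\Gamma_{2,s}).
\]

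Next I would decompose $\hone^{\wedge 2k}$ via Schur-Weyl duality as $\bigoplus_{j=0}^{k}\SF{(2k-j,j)}\hone\otimes P_{(2^j,1^{2k-2j})}$, rewriting the $\GL_2$-factor as $\Sym^{2k-2j}\hone\otimes\det^j$ using Lemma~\ref{lem:decomp}. The crucial structural observation is that the two basis classes of $H_{2k+1}(M_{I,J})$ live in distinct Schur-Weyl summands of $H_{2k}(F_2^{\,s})$: since $g\cdot\omega=\det(g)\omega$ while $g\cdot\x^{2k-2}$ sweeps out $\Sym^{2k-2}\hone$, the $\GL_2$-orbit of $\omega\otimes\x^{2k-2}$ generates $\det\otimes\Sym^{2k-2}\hone=\SF{(2k-1,1)}\hone$, placing that class in the $j=1$ summand; similarly the $\GL_2$-orbit of $\x^{2k}$ generates $\Sym^{2k}\hone=\SF{(2k)}\hone$, placing it in the $j=0$ summand.

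Applying $H_1(\GL_2(\Z);-)$ summand by summand, Eichler-Shimura identifies the $j=0$ contribution as $\mathcal{S}_{2k+2}$ and the $j=1$ contribution as $\mathcal{M}_{2k}$. Section~\ref{subsec:modular} identifies the natural map $H_1(P;\Sym^r\hone\otimes\det^\ell)\to H_1(\GL_2(\Z);\Sym^r\hone\otimes\det^\ell)$ as the dual of restriction to the cusp at infinity: for $r$ even it vanishes when $\ell$ is even (cusp forms vanish at infinity) and is injective when $\ell$ is odd (Eisenstein series take nonzero values at infinity). Hence $\x^{2k}$ (the $j=0$ case) maps to zero in $H_{2k+1}(\Gamma_{2,s})$, while $\omega\otimes\x^{2k-2}$ (the $j=1$ case) produces a nonzero class $m_{I,J}$, using that $\mathcal{M}_{2k}\ne 0$ for $k\ge 2$, which follows from the hypothesis $|J|=2k-2\ge 2$.

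Finally, for the $\SG_s$-module structure, $m_{I,J}$ takes the form $f_0\otimes v$ where $f_0\in\mathcal{M}_{2k}$ is the image of the $P$-invariant under the injection $\F\hookrightarrow\mathcal{M}_{2k}$, and $v\in P_{(2,1^{2k-2})}\circ P_{(s-2k)}$ is the vector built from $\omega$ at positions $I$ and $\x$ at positions $J$, transforming under $\SG_I\times\SG_J$ as $\mathrm{triv}\otimes\mathrm{alt}$. Because $P_{(2,1^{2k-2})}$ is irreducible as an $\SG_{2k}$-module, any nonzero element of $P_{(2,1^{2k-2})}\otimes P_{(s-2k)}$ generates the whole factor under $\SG_{2k}\times\SG_{s-2k}$, so the $\SG_s$-orbit of $v$ fills the entire induced module $P_{(2,1^{2k-2})}\circ P_{(s-2k)}$, giving the claimed isomorphism. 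The Pieri identity $P_{(2)}\circ P_{(1^{2k-2})}=P_{(3,1^{2k-3})}\oplus P_{(2,1^{2k-2})}$ together with Frobenius reciprocity then confirms that this is precisely the $\SG_s$-module cut out by $v$. The main obstacle will be cleanly verifying that $\omega\otimes\x^{2k-2}$ has no component outside the $j=1$ Schur-Weyl summand and correctly aligning the comparison of spectral sequences with the Eichler-Shimura value-at-infinity picture so that the Eisenstein detection applies.
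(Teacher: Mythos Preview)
Your approach is correct and is essentially the homology dual of the paper's argument. The paper works in cohomology, exhibiting a class in $H^{2k+1}(\Gamma_{2,s})$ that restricts along $M_{I,J}\hookrightarrow\Gamma_{2,s}$ to the functional $(\omega\otimes\x^{2k-2})^*$ and hence pairs nontrivially with $m_{I,J}$; you instead push $m_{I,J}$ forward through the map of Leray--Serre spectral sequences in homology and detect nonvanishing via the dual of the restriction-to-the-cusp map. Both routes hinge on the same two ingredients: the observation that $\omega\otimes\x^{2k-2}$ lies in the Schur--Weyl summand $\SF{(2k-1,1)}\hone\cong\Sym^{2k-2}\hone\otimes\det$ (the $j=1$ piece), and the parabolic analysis of Section~\ref{subsec:modular} showing that the map between $P$- and $\GL_2(\Z)$-cohomology is nonzero precisely on the odd-determinant summands. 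Your final paragraph is slightly overcomplicated: once you know $m_{I,J}$ lands in the $j=1$ summand $\SF{(2k-1,1)}\hone\otimes P_{(2,1^{2k-2})}$ of $\wV{\hone}{2k}$, the $\SG_s$-module statement follows immediately from irreducibility of $P_{(2,1^{2k-2})}$ and the induction to $\SG_s$; the Pieri/Frobenius remark is redundant.
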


In particular, when $s=2k$ the classes $m_{I,J}$ generate $H_{2k+1}(\Gamma_{2,2k})$ only when $k\leq 3$; this follows from Theorem~\ref{thm:ranktwo}.

\begin{proof}
To prove that $m_{I,J}$ is nonzero we find a cohomology class that pairs nontrivially with it.

Assume first that $s=2k$.  To simplify notation we also assume  $I=\{1,2\}$ and $J=\{3,\ldots, 2k\}$  and set
$B=B_{I,J},$   $M=M_{I,J}$ and $m=m_{I,J}$.
The map from $
H^{2k+1}(\Gamma_{2,2k})\iso H^1\big(\GL_2(\Z);H^{2k}(F_2^{2k})\big)$
to $ H^{2k+1}(M)$
induced by the inclusion $M \hookrightarrow \Gamma_{2,2k}$ factors as
\begin{equation*}\label{composition}
H^1\big(\GL_2(\Z);H^{2k}(F_2^{2k})\big) \longrightarrow H^1\big(P;H^{2k}(F_2^{2k})\big) \longrightarrow   H^1\big(P;H^{2k}(B )\big)
\end{equation*}
where the first map is induced by the inclusion $P\hookrightarrow \GL_2(\Z)$
and the second by the  map of coefficients induced by $B \hookrightarrow  F_2^{2k}$.

In Remark~\ref{rem:cusp} we pointed out that the first map is a surjection onto  the odd terms  of the decomposition of  $H^1\big(P;H^{2k}(F_2^{2k})\big)$ into irreducible $\SG_{2k}$-modules.  Here is a more explicit description of this map.  Since $P\iso \Z$, for any $P$-module $V$ we have $H^1(P;V)\iso V_P$. If $V$ is a vector space there is a canonical isomorphism $(V^*)_P \iso (V^P)^*$ (sending $f$ to its restriction to $V^P$).  In particular, using the universal coefficient theorem we get natural isomorphisms
$$
H^1\big(P;H^{2k}(F_2^{2k})\big)=
\big[H^{2k}(F_2^{2k})\big]_P\iso \big([H_{2k}(F_2^{2k})]^P\big)^* =
\big([H_1(F_2)^{\otimes 2k}\otimes \alt]^P\big)^*=
\big([\hone^{\otimes 2k}]^P\big)^*\otimes \alt,
$$
where $\alt$ refers to the $\SG_{2k}$-action.
Now recall that the space of $P$-invariants in $\hone\otimes\hone$ is spanned by   $\omega$ and $\x^2$. Since $\hone\otimes \hone= \ext^2\hone\oplus\Sym^2\hone$, this shows that the subspace of $P$-invariants in each summand is $1$-dimensional. This is a special instance of the general fact that the space of $P$-invariants in $ (\ext^2\hone)^{\otimes \ell}\otimes \Sym^{2k-2\ell}\hone$ is $1$-dimensional, spanned by $\omega^\ell\otimes  \x^{2k-2\ell}$.  The Schur-Weyl decomposition of $\hone^{\otimes 2k}$ then shows that the image of the first map can be identified with
$\bigoplus_{\ell<  k\,\, \mathrm{odd}}P_{(2^\ell,1^{2k-2\ell})}$, as in Remark~\ref{rem:cusp}, where each term is generated by $\omega^\ell\otimes  \x^{2k-2\ell}$ as an $\SG_{2k}$-module.

For the second map, note that
\begin{align*}
H^1\big(P;H^{2k}(B)\big)&=\big[\h\otimes \h \otimes X^{\otimes 2k-2}\big]_P\otimes \alt \\
&\iso\big([\hone\otimes \hone \otimes \Xone^{\otimes 2k-2} ]^P\big)^*\otimes \alt,
\end{align*}
where this $\alt$ refers to the action of  $\SG_2\times \SG_{2k-2}$ which permutes the factors of $B=F_2^2\times \Z^{2k-2}$ independently.  Thus the map
$H^1\big(P;H^{2k}(F_2^{2k})\big) \to H^1\big(P;H^{2k}(B)\big)$ becomes
\begin{align*}
\big([H_{2k}(F_2^{2k})]^P\big)^*  &\longrightarrow \big( [H_{2k}(B)]^P\big)^* \\
\intertext{i.e.,}
\big([\hone^{\otimes 2k}]^P\big)^*\otimes \alt  &\longrightarrow \big([ \tV{\hone}{2} \otimes \tV{\Xone}{2} ]^P\big)^*\otimes \alt.
\end{align*}

The map on the first factor is just the transpose of the inclusion map
$ [\tV{\hone}{2}\otimes \tV{\Xone}{2}]^P \hookrightarrow [\tV{\hone}{2k}]^P$ and in particular
sends
$(\omega\otimes \x^{ 2k-2})^*$ to itself.

Since $(\omega\otimes \x^{ 2k-2})^*$ is in the image of the first map $H^1\big(\GL_2(\Z);H^{2k}(F_2^{2k})\big)\to H^1\big(P;H^{2k}(F_2^{2k})\big)$, there is a cohomology class in $H^{2k+1}(\Gamma_{2,2k})$ which hits it under the composition
$$
H^{2k+1}(\Gamma_{2,2k})=H^1\big(\GL_2(\Z);H^{2k}(F_2^{2k})\big)\longrightarrow H^1\big(P;H^{2k}(F_2^{2k})\big)\longrightarrow H^1\big(P;H^{2k}(B)\big)=H^{2k+1}(M).
$$
This class evaluates nontrivially on the image $m\in H_{2k+1}(\Gamma_{2,2k})$ of $\omega\otimes \x^{ 2k-2} \in H_{2k+1}(M)$, showing that $m$ is nontrivial.

Any permutation of the indices $\{1,\ldots,2k\}$ gives another class in $H_{2k+1}(\Gamma_{2,2k})$.  The $\SG_{2k}$-submodule
generated by $m$   is isomorphic to $P_{(2,1^{2k-2})}$, which
coincides with $H_{2k+1}(\Gamma_{2,2k})$
only when $k=2,3$.
This completes the proof of the proposition  for $s=2k$.

The generalization to $s>2k$ is straightforward, since
$
H^{2k+1}(\Gamma_{2,s})\iso H^1\big(\GL_2(\Z);H^{2k}(F_2^s)\big) \iso H^1\big(\GL_2(\Z);\wH{2k}\big)\circ P_{(s-4k)}.
$
The $\SG_s$-module generated by the image of   $H_{2k+1}(M_{I,J} )$ in $H_*(\Gamma_{2,s})$ is isomorphic to $P_{(2,1^{2k-2})} \circ P_{(s-2k)}$.
\end{proof}

As 
was noted when the classes $m_{I,J}$ were defined, they are invariant under transposing the two indices in $I$ and anti-invariant under permutations of the indices in $J$.  When $s=2k$
we can obtain a class which is anti-invariant under a larger group of permutations by adding together signed images of $m_{I,J}$ under appropriate permutations. Specifically, let $I=\{1,2\},$  $J=\{3,\dots,2k\}$ and $m=m_{I,J}$ as in the proof of Proposition~\ref{prop:oddtwo}  and choose an index $i\in\{1,\dots,2k\}$. Then define
$$
m_{i} = \sum_{\sigma(1)=i} \sign(\sigma) \sigma(m),
$$
where the sum is over all permutations $\sigma\in \SG_{2k}$ which send $1$ to $i$.
The class $m_{i}$  is then anti-invariant under ${\rm stab}_{\SG_{2k}}(i)$.  For example when $I=\{1,2\}$ and $J=\{3,4\}$, so $m$ corresponds to $\x\y\x\x -\y\x\x\x$ (omitting tensor symbols for simplicity),  the class $m_1$ corresponds to $6\y \x \x \x -2\x \y \x \x -2\x \x \y \x-2\x \x \x \y$, up to sign.  The formula for $m_2$ is similar, and one sees that $m =\pm\frac{1}{8} (m_1-m_2)$.  For larger $J$ there are analogous formulas.

\subsubsection{The general case}

Now we consider the general case  $|I| = 2 \ell$ for odd $\ell\geq1$.  This is more involved because for $\ell>1$ the top-dimensional cohomology $H^{2k+1}(M_{I,J})$ is  quite large and it is not immediately clear how to pick a distinguished element dual to $\omega^{\otimes \ell}\otimes \x^{2k-2\ell}$.  We settle this by using the unique element which is invariant under the  action of certain involutions.  This is motivated by the case $\ell=1$, where the element $\omega \in \wV{\hone}{2}$ spans the invariants of $\wV{\hone}{2}$ under the action of
the involution $(12)$.

Given any set $T$ of  disjoint transpositions,  let $N_T$ denote the elementary abelian subgroup that  they generate.

\begin{proposition}\label{prop:min} Let $T$ be a set of $\,\ell$ disjoint transpositions of the set $I$.
For $\ell$  odd, the top homology  $H_{2k +1}(M_{I,J})$ contains a unique (up to scalar multiple) element  which is invariant under the action of
$N_T \subset \SG_I$.
The image of this element under the map $H_{2k+1}(M_{I,J}) \to H_{2k+1}(\Gamma_{2,s})$ induced by inclusion is nonzero. The $\SG_s$-module generated by this image is isomorphic to
$P_{(2^{\ell},1^{2k-2\ell})} \circ P_{(s-2k)} $.
\end{proposition}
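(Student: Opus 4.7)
My plan is to extend the argument of Proposition~\ref{prop:oddtwo} (which handled $\ell=1$) by replacing the distinguished invariant $\omega\in\ext^2\hone$ with its $\ell$th tensor power distributed across the positions of $I$ according to the pairing $T$. Throughout I shall assume $\ell<k$, equivalently $|J|\ge 2$, so that the Eichler--Shimura data of Remark~\ref{rem:cusp} applies. The first step is a Leray--Serre computation for the extension $1\to B_{I,J}\to M_{I,J}\to P\to 1$: since $B_{I,J}\iso F_2^{2\ell}\times\Z^{2k-2\ell}$ has top homology in degree $2k$ and $P\iso\Z$ has cohomological dimension~$1$, all potential differentials vanish for dimension reasons and
$$
H_{2k+1}(M_{I,J})=H_1\bigl(P;H_{2k}(B_{I,J})\bigr)=\bigl[\wV{\hone}{2\ell}\bigr]^P\otimes\wV{\Xone}{2k-2\ell},
$$
where $P$ fixes $\x$ (hence acts trivially on $\Xone$) and the wedge notation records the sign K\"unneth action of $\SG_I\times\SG_J$.

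Under the sign K\"unneth convention each transposition $(i_a j_a)\in T$ fixes exactly the antisymmetric part $\ext^2\hone=\F\cdot\omega$ of its two-factor slot, so the $\ell$ disjoint transpositions in $T$ cut out a $1$-dimensional $N_T$-invariant subspace of $\wV{\hone}{2\ell}$, spanned by the element $\omega_T$ obtained by placing one copy of $\omega$ in each pair of positions prescribed by $T$. Since $\omega$ is the standard symplectic form on $\hone$ it is $\SL_2(\Z)$-invariant, and in particular $P$-invariant, so $\omega_T$ lies in $[\wV{\hone}{2\ell}]^P$ automatically; the unique (up to scalar) $N_T$-invariant in $H_{2k+1}(M_{I,J})$ is therefore the class $\tilde m_T$ represented by $\omega_T\otimes \x^{\otimes 2k-2\ell}$.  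This uniqueness in fact holds for all $\ell$; the parity of $\ell$ only enters at the next step.

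To prove that the image of $\tilde m_T$ in $H_{2k+1}(\Gamma_{2,s})$ is nonzero I would construct a dual cohomology class mimicking the proof of Proposition~\ref{prop:oddtwo}, using the factorisation
$$
H^{2k+1}(\Gamma_{2,s})\iso H^1\bigl(\GL_2(\Z);H^{2k}(F_2^s)\bigr)\longrightarrow H^1\bigl(P;H^{2k}(F_2^s)\bigr)\longrightarrow H^1\bigl(P;H^{2k}(B_{I,J})\bigr).
$$
By Remark~\ref{rem:cusp} the image of the first arrow picks up the summand $P_{(2^\ell,1^{2k-2\ell})}\circ P_{(s-2k)}$ precisely when $\ell$ is odd, because the modular-forms factor $\mathcal{M}_{2k+2-2\ell}$ maps isomorphically to the invariants at the cusp while the cusp-forms factor $\mathcal{S}_{2k+2-2\ell}$ is killed. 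A class in $H^{2k+1}(\Gamma_{2,s})$ whose image projects nontrivially onto the line dual to $\omega_T\otimes \x^{\otimes 2k-2\ell}$ then pairs nontrivially with $\tilde m_T$ and thereby detects it.

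Finally, for the $\SG_s$-module structure, the stabilizer of the data $(I,J,T)$ in $\SG_s$ is $(\SG_2\wr\SG_\ell)\times\SG_{2k-2\ell}\times\SG_{s-2k}$, and a direct K\"unneth-sign calculation shows that $\tilde m_T$ transforms under this stabilizer by the character which is trivial on the wreath-product factor and on $\SG_{s-2k}$ and equal to the sign character on $\SG_{2k-2\ell}$. The hard part will be upgrading this character information to the claimed identification of the $\SG_s$-submodule generated by $\tilde m_T$: the representation induced from the stabilizer character decomposes via Pieri's rule into several Specht summands, so pinning down the orbit of $\tilde m_T$ as exactly $P_{(2^\ell,1^{2k-2\ell})}\circ P_{(s-2k)}$ will require either counting $N_T$-fixed vectors inside each candidate irreducible component, or, more cleanly, invoking duality with the cohomology generator constructed in the detection step to single out the unique isotypic piece in which $\tilde m_T$ can actually live.
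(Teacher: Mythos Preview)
Your argument for uniqueness of the $N_T$-invariant and for nontriviality of its image is essentially the paper's: the Leray--Serre identification $H_{2k+1}(M_{I,J})=[\wV{\hone}{2\ell}]^P\otimes\wV{\Xone}{2k-2\ell}$, the observation that the $N_T$-invariants in $\wV{\hone}{2\ell}$ are spanned by $\omega^{\otimes\ell}$, and the detection via Remark~\ref{rem:cusp} for $\ell$ odd all match the paper's proof line for line.

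Where you diverge is in the last paragraph. You propose to compute the stabilizer of $(I,J,T)$ and its character, induce up, and then whittle the result down to $P_{(2^\ell,1^{2k-2\ell})}\circ P_{(s-2k)}$ by counting fixed vectors or by duality with the detection class. This works in principle but is unnecessary. The paper settles the $\SG_s$-type in one line by applying Schur--Weyl duality directly: the element $\omega^{\otimes\ell}\otimes\x^{\otimes 2k-2\ell}$, viewed in $\wV{\hone}{2k}$ via the inclusion $B_{I,J}\hookrightarrow F_2^{2k}$, is a highest-weight vector for $\GL_2$ of weight $(2k-\ell,\ell)$, so it lies in the single isotypic component $\SF{(2k-\ell,\ell)}\hone\otimes P_{(2^\ell,1^{2k-2\ell})}$ of the Schur--Weyl decomposition. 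Since the image in $H_{2k+1}(\Gamma_{2,s})$ is nonzero and $P_{(2^\ell,1^{2k-2\ell})}$ is irreducible as an $\SG_{2k}$-module, the $\SG_{2k}$-span of the image is exactly one copy of $P_{(2^\ell,1^{2k-2\ell})}$, and inducing to $\SG_s$ gives the claimed module. Your second suggestion (duality with the detection class) is morally the same observation read on the cohomology side, but the Schur--Weyl phrasing avoids any decomposition of an induced module and makes the identification immediate.
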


\begin{proof}
In order to simplify the notation we will assume that
$I=\{1,\dots, 2\ell\}$, $J= \{2\ell + 1, \dots, 2k\}$ and $T=\{(1,2),(3,4),\dots ,(2\ell-1, 2\ell)\}$,  and set  $B_\ell=B_{I,J}$ and $M_\ell=M_{I,J}$.

Recall that $\Xone$ is the $1$-dimensional subspace of $H_1(F_2)$ spanned by $\x$. The actions of $P$ and $N_T$ on $H_{2k}(B_{\ell})=\wV{\hone}{2\ell} \otimes \wV{\Xone}{2k-2\ell} $ commute  so
\begin{align*}
\left[H_{2k +1}(M_{\ell}) \right]^{N_T}
    & = \left[H_1\big(P;H_{2k}(B_{\ell})\big) \right]^{N_T} \\
  & = \left[ [H_{2k}(B_{\ell})]^P \right]^{N_T} \\
    & = \left[[\wV{\hone}{2\ell} \otimes \wV{\Xone}{2k-2\ell}]^P\right]^{N_T} \\
        & = \left[[\wV{\hone}{2\ell} \otimes \wV{\Xone}{2k-2\ell}]^{N_T}\right]^{P} \\
    &=\left[[\wV{\hone}{2\ell}]^{N_T} \otimes \wV{\Xone}{2k-2\ell} \right]^{P}
 \end{align*}
 The space of invariants in $\wV{\hone}{2\ell}$ under the action of   $N_T$  is $1$-dimensional,  spanned by $\omega^{\otimes \ell}$, so the entire space $\left[H_{2k +1}(M_{\ell})\right]^{N_T}$  is  at most $1$-dimensional.  It is exactly $1$-dimensional since
  the element $$m_{\ell} = \omega^{\otimes \ell} \otimes \x^{2k-2\ell}\in   \big[\wV{\hone}{2\ell} \otimes \wV{\Xone}{2k-2\ell}\big]^P$$
is  invariant under the action of  $N_T$.  We will show that  $m_{\ell}$ has nontrivial image in
$H_{2k +1}(\Gamma_{2,s})$ if $\ell$ is odd.

Let $D_\ell$ be the subspace of $H^{2k}(F_2^{2k})=\h^{\wedge 2k}$ generated as a $GL_2(\Z)$-module  by $(\omega^*)^{\otimes\ell}\otimes( \x^*)^{\otimes 2k-2\ell}$.  Thus  $$
D_\ell\iso \SF{(2k - \ell, \ell)}(\h) \cong {\det}^{\ell} \otimes \Sym^{2k-2\ell}(\h).  $$
(See Lemma~\ref{lem:decomp}.) Viewing $D_\ell$ as a submodule of the induced module $H^{2k}(F_2^s)=H^{2k}(F_2^{2k})\circ P_{(s-2k)}$
we see that  $m_\ell$ pairs nontrivially with the $P$-coinvariants in  $D_{\ell}$ (to compute these coinvariants, note that the action of $P$ on $H=H^1(F_2)$ is dual to its action on $H_1(F_2)$ so sends $\x^*\mapsto \x^*+\y^*$ and fixes $\y^*$.)  Therefore $m_{\ell}$ pairs nontrivially with the cohomology class generating $H^1(P;D_{\ell})$.

Since  $\,\ell\,$ is odd  Remark~\ref{rem:cusp} shows that the map $H^1\big(\GL_2(\Z);D_{\ell}\big) \to H^1(P;D_{\ell})$ is surjective. Hence
the class $m_{\ell}$ pairs nontrivially with a class in $H^1\big(\GL_2(\Z);D_{\ell}\big)$ which is the image of a class in
$H^1\big(\GL_2(\Z);H^{2k}(F_2^{s})\big) = H^{2k+1}(\Gamma_{2,{s}})$. This shows that the homology class $m_{\ell}$ is nonzero in $H_*(\Gamma_{2,s})$.

The last statement of the proposition follows from the Schur-Weyl decomposition of  $H_{2k}(B_\ell)$:
$$
H_{2k}(B_{\ell})  =  \wV{\hone}{2\ell} \otimes \wV{\Xone}{2k-2\ell}
     ={\textstyle\bigoplus}_{i\leq \ell}\,\,\SF{(\ell+i,\ell-i)} \hone \otimes P_{(2^{\ell-i},1^{2i})} \otimes \wV{\Xone}{2k-2\ell}
$$
The element $m_\ell=\omega^{\ell}\otimes \x^{2k-2\ell}$ is in the $i=0$ term $\SF{(\ell,\ell)}\hone\iso \det^{\ell}\otimes \Sym^{2k-2\ell}\hone$, so the  $\SG_{2k}$-submodule of $H_{2k}(B_\ell)$ it generates is of type $P_{(2^\ell,1^{2k-2\ell})}$, which is then induced up to  $P_{(2^\ell,1^{2k-2\ell})}\circ P_{(s-2k)}$.
\end{proof}

\begin{remark}   The $\SG_s$-module generated by the element of  $H^{2k+1}(\Gamma_{2,s})$ found in Proposition~\ref{prop:min} must come from the term
$\mathcal M_{2k+2-2\ell}\otimes P_{(2^\ell,1^{2k-2-2\ell})}\circ P_{(s-2k)}$ of the computation of $H^{2k+1}(\Gamma_{2,s})$ in Theorem~\ref{thm:ranktwo}.  In fact it  comes from the map $\mathcal M_{2k+2-2\ell} \to \F$
obtained by evaluating the modular form at infinity.  This is clear from the construction since we are using the parabolic subgroup $P$ and  the inclusion of $P$ into $\GL_2(\Z)$ kills  all other classes (see Section~\ref{sec:modular}).
\end{remark}

\begin{remark}
Recall that  after the proof of Proposition~\ref{prop:oddtwo} we defined classes $m_i$  using a symmetrization procedure.  Similarly, we can use extra symmetrization to obtain classes $m_{I',J'}$ indexed by disjoint sets $I'$ and $J'$ with $|I'| = \ell$ and $|J'| = 2k-\ell$ for $\ell$ odd which span the module $P_{(2^\ell,1^{2k-2\ell})} \circ P_{(s-2k)}$ inside $H_{2k+1}(\Gamma_{2,s})$. These elements are anti-invariant under the action of $\SG_{I'} \times \SG_{J'}$ and invariant under permutations fixing $I'$ and $J'$  pointwise.
These elements generate all of $H_{2k+1}(\Gamma_{2,s})$ if $k \leq 5$.
For $k > 5$ they generate only the homology coming from the parabolic group $P \subset \SL_2(\Z)$.
The elements $m_{I',J'}$ are not linearly independent, but a subset similar to the one described in Remark~\ref{remark:alpha-basis} can be used to form a basis of the corresponding $\SG_s$-module.
\end{remark}

\section{Gluing classes together}
\label{sec:gluing}

Given a graph $\Gr_{n,s}$ we can obtain a new set of graphs $\{\Gr_{n_i,s_i}\}$ by snipping some of the edges at their midpoints; the snipped edges will become leaves in the $\Gr_{n_i,s_i}$.  Conversely, suppose we have a set of graphs $\{\Gr_{n_1,s_1},\ldots, \Gr_{n_k,s_k}\}$ and a {\em gluing pattern} $\phi$ which pairs up some or all of the leaf vertices to form a connected graph $\Gr_\phi$.  An example is shown in Figure~\ref{fig:gluing}.  If $X_\phi$ has rank $n$ and $s$ leaves, then the gluing
$$
\Gr_{n_1,s_1}\cup \cdots\cup \Gr_{n_k,s_k}\longrightarrow \Gr_\phi
$$
induces a homomorphism
$$
p_\phi\colon\Gamma_{n_1,s_1}\times \cdots\times  \Gamma_{n_k,s_k}\longrightarrow
\Gamma_{n,s}.
$$
This in turn induces an {\em assembly map} on homology via the cross product,
$$
A_\phi\colon H_*(\Gamma_{n_1,s_1})\otimes \cdots\otimes  H_*(\Gamma_{n_k,s_k})\longrightarrow H_*(\Gamma_{n,s}).
$$
In particular, if we glue all of the univalent vertices in pairs, we obtain a map to the homology of $\Outn$, and if we glue all but one we obtain a map to the homology of $\Autn$.
We allow leaf vertices of a single $\Gr_{n,s}$ to be glued together.  For example, gluing all four leaf vertices of $\Gr_{2,4}$ in pairs gives an assembly map $H_*(\Gamma_{2,4})\to H_*(\Gamma_{4,0})$ that we use in Section~\ref{subsec:GammaTwoFour}.

\begin{remark}
Assembly maps are associative since this is obviously true for gluing graphs together, and the cross product in homology is associative.  In particular, if a gluing is done in two stages, the assembly map factors through the intermediate stage.
\end{remark}

\begin{remark}
The $\vcd$ of $\Gamma_{n,s}$ is $2n+s-3$ if $n>0$, and for a $k$-fold assembly map as above with each $n_i>0$ this is given by the formula
$$
\vcd(\Gamma_{n,s}) = \vcd(\Gamma_{n_1,s_1}) + \cdots +  \vcd(\Gamma_{n_k,s_k}) + (k-1).
$$
To see this it suffices by induction to consider the case of gluing a single pair of leaves.  If $k=1$ we are gluing two leaves of the same graph together, increasing $n$ by one and decreasing $s$ by two, so the $\vcd$ is unchanged.  If $k=2$ and we glue a leaf of one graph to a leaf of the other  we have $n=n_1 + n_2$ and $s=s_1 +s_2 - 2$, so $2n+s-3$ is one more than the sum
$(2n_1 +s_1 -3)+(2n_2 +s_2 -3)$.
A consequence of this relation between the $\vcd$'s is that a $k$-fold assembly map with $k>1$ cannot produce homology classes in the $\vcd$ of $\Gamma_{n,s}$. This holds even when some $n_i$'s are $0$, provided we exclude trivial factors with $(n_i,s_i)=(0,2)$.
\end{remark}

A different gluing $\phi^\prime$ may also produce a graph  $\Gr_{\phi^\prime}$ of rank $n$ with $s$ leaves.  As noted at the beginning of Section~\ref{subsect:Gns}, a bijection between the leaf vertices of $X_\phi$ and $\Gr_{\phi^\prime}$ determines an isomorphism between the targets of both  assembly maps.  For $\phi^\prime=\phi$ this gives the action of the symmetric group $\SG_s$ on $H_*(\Gamma_{n,s})$.

The left-hand side of the assembly map $A_\phi$ is a priori an $\SG_{s_1}\times \dots\times \SG_{s_k}$-module.  The gluing $\phi$  interacts with the action of $\SG_{s_1}\times \dots\times \SG_{s_k}$ in various ways, which can be explained by the following  two observations:

\begin{enumerate}
\item  If $t_{ij}$ leaves of $\Gr_{n_i,s_i}$ are paired with leaves of $\Gr_{n_j,s_j}$, then a permutation that does the same thing to both sets of leaves does not change the result of the gluing.
\item   If there are $u_i$ unglued leaves in $\Gr_{n_i,s_i}$, then permuting them can be done before or after gluing with the same effect.
\end{enumerate}

The algebraic effect of the first observation is that  the map $A_\phi$ factors through the coinvariants of the diagonal action of $\SG_{t_{ij}}$ on $H_*(\Gamma_{n_i,s_i})\otimes H_*(\Gamma_{n_j,s_j})$.  Here   $\SG_{t_{ij}}$ acts on  $H_*(\Gamma_{n_i,s_i})$ and $H_*(\Gamma_{n_j,s_j})$ by restriction of the $\SG_{s_i}$ and $\SG_{s_j}$ actions.  For example, if the leaves of a graph with exactly $s$ leaves are glued to the leaves of another graph with exactly $s$ leaves, then the assembly map
factors through the space of $\SG_s$ coinvariants:
$$
\begin{tikzpicture} 
\node (D) at (.5,1.4) {$H_p(\Gamma_{n_1,s}) \otimes H_q(\Gamma_{n_2,s})$};
\node (T) at (6.5,1.4) {$H_{p+q}(\Gamma_{n_1+n_2+s-1,0})$};
\node (I) at (3.5,0) {$\big(H_p(\Gamma_{n_1,s}) \otimes H_q(\Gamma_{n_2,s})\big)_{\SG_s}$};
\draw[->] (D) to node[above, pos=0.45] {$A_\phi$} (T);
\draw [->>] (D) to (I);
\draw [dashed, ->] (I) to (T);
\end{tikzpicture}
$$
Now $(P_\lambda\otimes P_\mu)_{\SG_s}$ is zero unless $\lambda=\mu$, in which case it is  $1$-dimensional.
Therefore the assembly map  is trivial  unless some irreducible $P_\lambda$ appears in  the $\SG_s$ decompositions of both
$H_p(\Gamma_{n_1,s})$ and $H_q(\Gamma_{n_2,s})$.

The second observation says that  we can make $A_\phi$ into an $(\SG_{u_1}\times\cdots\times \SG_{u_k})$-module map by realizing $\SG_{u_1}\times\cdots\times \SG_{u_k}$  as the appropriate subgroup of $\SG_s$. (Here again $\SG_{u_i}$ acts on $H_*(\Gamma_{n_i,s_i})$ by restricting the $\SG_{s_i}$ action.)
We can sometimes obtain new information about $A_\phi$ by extending it to an $\SG_s$-module map, which we call $\hat A_{\phi}$.  Thus the range of $\hat A_{\phi}$ is still $H_*(\Gamma_{n,s})$  but the domain of $\hat A_{\phi}$ is the module obtained by inducing $H_*(\Gamma_{n_1,s_1})\otimes \cdots\otimes  H_*(\Gamma_{n_k,s_k})$, considered as a $(\SG_{u_1}\times\cdots\times \SG_{u_k})$-module, up  to  $\SG_s$.
The advantage here is that $\SG_s$-modules and $\SG_s$-module maps between them are very well understood.

\begin{figure}
\begin{center}
\begin{tikzpicture}[scale=.6]
\begin{tikzrankone}{0cm}{0cm}
\tikzhair{c1}{30}
\tikzhair{c2}{0}
\tikzhair{c3}{-30}
\end{tikzrankone}
\begin{tikzranktwov}{4cm}{0cm}
\tikzhair{d1}{120}
\tikzhair{d2}{160}
\tikzhair{d3}{-160}
\tikzhair{d4}{-120}
\end{tikzranktwov}
\draw [dotted,->-=0.5] (c1) to [out=30, in=160] (d2);
\draw [dotted,->-=0.5] (c3) to [out=-30, in=-160] node[below]{$\phi$}  (d3);
\begin{tikzrankone}{10cm}{0cm}
\tikzhair{rc1}{40}
\tikzhair{rc2}{0}
\tikzhair{rc3}{-40}
\end{tikzrankone}
\begin{tikzranktwov}{14cm}{0cm}
\tikzhair{rd1}{120}
\tikzhair{rd2}{150}
\tikzhair{rd3}{-150}
\tikzhair{rd4}{-120}
\end{tikzranktwov}
\draw (rc1) to [out=40, in=150] (rd2);
\draw (rc3) to [out=-40, in=-150]  (rd3);
\end{tikzpicture}
\end{center}
\caption{Making $\Gr_{4,3}$ from $\Gr_{1,3}$ and $\Gr_{2,4}$ using a gluing map $\phi$ }
\label{fig:gluing}
\end{figure}
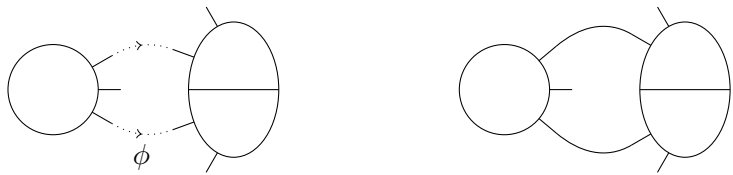

In the following sections we give examples of assembly maps.  In particular we show how all but one of the known nontrivial homology classes for $\Outn$ and $\Autn$ are obtained by assembling classes from the homology of $\Gamma_{1,s}$ and $\Gamma_{2,s}$.

\subsection{Morita's original series~\cite{Morita}}
\label{subsec:Morita}
Recall from Section~\ref{subsec:alpha_k}
that $H_{2k}(\Gamma_{1,2k+1})\iso \F$ with generator $\alpha_k$.
Fix a gluing pattern $\phi : \Gr_{1,2k+1}\cup \Gr_{1,2k+1}\to \Gr_{2k+2,0}$ which matches all of the leaves of the first graph with those of the second.  This gives an assembly map
$$
A_\phi\colon H_{2k}(\Gamma_{1,2k+1})\otimes H_{2k}(\Gamma_{1,2k+1})\longrightarrow H_{4k}\big(\Out(F_{2k+2})\big).
$$
The $k$th Morita class $\mu_k$ is the image under $A_\phi$ of $\alpha_k\otimes\alpha_k$.  Remark~\ref{rem:HairyMorita} explains why this viewpoint leads to the same classes as those originally  defined by Morita. The classes $\mu_1,\mu_2$, and $\mu_3$ are known to be nontrivial.

A lift of $\mu_k$ to $H_{4k}\big(\Aut(F_{2k+2})\big)$ can be obtained via assembly maps using the gluing pattern $\phi : \Gr_{1,2k+1}\cup\Gr_{0,3}\cup \Gr_{1,2k+1}\to \Gr_{2k+2,1}$
which matches one leaf of $\Gr_{0,3}$ with a leaf of one $\Gr_{1,2k+1}$, another leaf of $\Gr_{0,3}$ with a leaf of the other $\Gr_{1,2k+1}$, and then pairs the remaining leaves of the two copies of $\Gr_{1,2k+1}$ as before.
Let $\hat\mu_k $ be the image of $\alpha_k\otimes\iota\otimes \alpha_k$ under the resulting assembly map $H_{2k}(\Gamma_{1,2k+1})\otimes H_{0}(\Gamma_{0,3})\otimes H_{2k}(\Gamma_{1,2k+1})\to H_{4k}(\Gamma_{2k+2, 1})$, where $\iota$ is a generator of $H_0(\Gamma_{0,3})$.  The projection map $H_{4k}(\Gamma_{2k+2,1})\to H_{4k}(\Gamma_{2k+2,0})$ then sends $\hat\mu_k $ to $\mu_k$.

\begin{remark}
This argument shows more generally that every assembly map with target $H_i\big(\Outn\!\big)$ lifts to $H_i\big(\Autn \big)$.
\end{remark}

\begin{proposition}
The Morita class $\mu_k\in H_{4k}\big(\Out(F_{2k+2})\big)$ is supported in an abelian subgroup $\Z^{4k}$ of $\Out(F_{2k+2})$, and the analogous statement also holds for a lift to $\Aut(F_{2k+2})$.
\end{proposition}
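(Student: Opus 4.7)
The plan is to trace $\mu_k$ through the assembly construction back to an explicit abelian subgroup of $\Out(F_{2k+2})$.

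By construction in Section~\ref{subsec:alpha_k}, the class $\alpha_k \in H_{2k}(\Gamma_{1,2k+1})$ is the image of the fundamental class of the free abelian subgroup $\Z^{2k}$ arising in the splitting $\Gamma_{1,2k+1} \cong \Z_2 \ltimes \Z^{2k}$. Therefore $\alpha_k \otimes \alpha_k$ is supported on the abelian subgroup $\Z^{2k}\times\Z^{2k}\cong\Z^{4k}$ of $\Gamma_{1,2k+1}\times\Gamma_{1,2k+1}$. Since the assembly map $A_\phi$ is induced by a group homomorphism $p_\phi\colon\Gamma_{1,2k+1}\times\Gamma_{1,2k+1}\to\Out(F_{2k+2})$, the class $\mu_k = A_\phi(\alpha_k\otimes\alpha_k)$ is supported on the image $p_\phi(\Z^{4k})$, which is abelian as a homomorphic image of $\Z^{4k}$.

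The main step is to verify that $p_\phi(\Z^{4k})$ has full rank $4k$, i.e.\ that $p_\phi|_{\Z^{4k}}$ is injective. For this I would use an explicit graph model: after gluing, $\Gr_{2k+2,0}$ consists of two central vertices $v_1,v_2$ joined by $2k+1$ edges $e_1,\dots,e_{2k+1}$, with a loop $\ell_1$ at $v_1$ and a loop $\ell_2$ at $v_2$. Taking $2k$ independent wrapping generators (omitting the dependent leaf $1$) in each $\Gamma_{1,2k+1}$, the generator from the first factor associated with the $i$-th leaf becomes the homotopy equivalence sending $e_i\mapsto\ell_1 e_i$ and fixing everything else; the analogous generator from the second factor sends $e_i\mapsto e_i\ell_2$. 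Choosing the spanning tree $\{e_1\}$ and the associated basis $\ell_1,\, e_1\ell_2 e_1^{-1},\, e_2 e_1^{-1},\dots, e_{2k+1}e_1^{-1}$ of $F_{2k+2}$, one checks directly that these $4k$ automorphisms pairwise commute in $\Aut(F_{2k+2})$ and act on $H_1(F_{2k+2})\cong\Z^{2k+2}$ as $4k$ commuting elementary matrices of the form $I+E_{i,1}$ and $I+E_{i,2}$, all with pairwise distinct index pairs. Such a family of commuting elementary matrices is linearly independent and generates a free abelian subgroup of rank $4k$ in $\SL_{2k+2}(\Z)$, so the composition $\Z^{4k}\to\Out(F_{2k+2})\to\SL_{2k+2}(\Z)$ is injective, giving $p_\phi(\Z^{4k})\cong\Z^{4k}$.

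For the lift $\hat\mu_k\in H_{4k}(\Aut(F_{2k+2}))$ the argument is essentially identical: $\hat\mu_k$ is the image of $\alpha_k\otimes\iota\otimes\alpha_k$ under the assembly map with domain $\Gamma_{1,2k+1}\times\Gamma_{0,3}\times\Gamma_{1,2k+1}$, and since $\Gamma_{0,3}$ is trivial this class is already supported on $\Z^{4k}\cong\Z^{2k}\times\{1\}\times\Z^{2k}$. The same Nielsen moves, now lifted inside $\Gamma_{2k+2,1}=\Aut(F_{2k+2})$, act on $H_1(F_{2k+2})$ by the same elementary matrices, so their image is again a rank-$4k$ abelian subgroup of $\Aut(F_{2k+2})$ supporting $\hat\mu_k$. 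The only nontrivial step is the injectivity verification, which becomes transparent after passing to the action on $H_1$; the rest is bookkeeping built into the assembly construction.
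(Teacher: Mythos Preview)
Your proof is correct and follows essentially the same approach as the paper's: both trace $\mu_k$ back to the product $\Z^{2k}\times\Z^{2k}$ inside $\Gamma_{1,2k+1}\times\Gamma_{1,2k+1}$, identify the image under $p_\phi$ with the subgroup generated by Nielsen automorphisms $\lambda_{i1}$ and $\rho_{i2}$, and conclude injectivity. You are slightly more explicit than the paper about the injectivity step---where the paper simply says ``from this description one can see that the map $\Z^{4k}\to\Out(F_{2k+2})$ is injective,'' you verify it by passing to the action on $H_1(F_{2k+2})$ and observing that the resulting unipotent matrices $I+E_{i,1}$, $I+E_{i,2}$ generate a free abelian group of rank $4k$ in $\GL_{2k+2}(\Z)$.
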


\begin{proof}
As noted in Section~\ref{subsec:alpha_k}, the class $\alpha_k$ is the top-dimensional homology class of a subgroup $\Z^{2k}$ in $\Gamma_{1,2k+1}$.  The assembly that produces $\mu_k$ then gives a map $\Z^{4k}\to\Out(F_{2k+2})$
taking a generator of $H_{4k}(\Z^{4k})$ to $\mu_k$.
It is easy to see using the definition of $\Gamma_{n,s}$ as a group of homotopy equivalences that the image of the map $\Z^{4k}\to\Out(F_{2k+2})$ is generated by automorphisms $\lambda_{i1}$ and $\rho_{i2}$ for $3\leq i\leq 2k+2$, where $\lambda_{ij}$ is left multiplication of the
basis element $x_i$ by $x_j$, and $\rho_{ij}$ is right multiplication of $x_i$ by $x_j$, with all basis elements other than $x_i$ fixed in both cases.   From this description one can see that the map $\Z^{4k}\to \Out(F_{2k+2})$ is injective, as is its lift to $\Aut(F_{2k+2})$, so the two versions of $\mu_k$ for $\Out$ and $\Aut$ are supported on $\Z^{4k}$ subgroups.
\end{proof}

\subsection{Homology of $\Gamma_{2,4}$}
\label{subsec:GammaTwoFour}

If $\phi$ connects two copies of $\Gr_{1,3}$ by gluing just one pair of leaves as in Figure~\ref{fig:2X_13-X_34}, the result is a graph $\Gr_{\phi}$ of rank $2$ with $4$ leaves and an assembly map
$$
A_\phi\colon H_2(\Gamma_{1,3})\otimes H_2(\Gamma_{1,3})\longrightarrow H_4(\Gamma_{2,4}).
$$

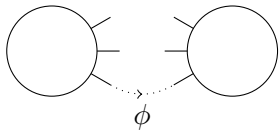
\begin{figure}
\begin{center}
\begin{tikzpicture}[scale=.6]
\begin{tikzrankone}{0cm}{0cm}
\tikzhair{f1}{30}
\tikzhair{f2}{0}
\tikzhair{f3}{-30}
\end{tikzrankone}
\begin{tikzrankone}{4cm}{0cm}
\tikzhair{s1}{150}
\tikzhair{s2}{180}
\tikzhair{s3}{-150}
\end{tikzrankone}
\draw [dotted,->-=0.5] (f3) to[out=-30,in=-150] node[below]{$\phi$}  (s3);
\end{tikzpicture}
\end{center}
\caption{Gluing two copies of $\Gr_{1,3}$ }
\label{fig:2X_13-X_34}
\end{figure}

As before,  let $\alpha_1$ be a generator of $H_2(\Gamma_{1,3})\iso P_{(1^3)}$.  The image of $\alpha_1\otimes \alpha_1$ under $A_\phi$ is then the nonvanishing class $\alpha_{I,J}$ in Proposition~\ref{prop:alphaIJ} in the case $k=1$.
By Theorem~\ref{thm:ranktwo} we have $H_4(\Gamma_{2,4})=P_{(2,2)}$, which is $2$-dimensional.
Since $P_{(2,2)}$ is irreducible as an $\SG_4$-module, nontriviality of $A_\phi$ implies that the induced map
$$
\hat A_\phi\colon
\Res_{\SG_2}^{\SG_3} \big(H_2(\Gamma_{1,3})\big)
\circ
\Res_{\SG_2}^{\SG_3} \big(H_2(\Gamma_{1,3})\big)
\longrightarrow H_4(\Gamma_{2,4})
$$
is surjective.

\subsection{Gluing two leaves of a single rank  $1$ graph}
\label{subsec:selfgluing}
If $\phi$ glues two leaves of $\Gr_{1,s}$ together as in Figure~\ref{fig:X1s-self}, the result is a graph $\Gr_{\phi}$ of rank $2$ with $s-2$ leaves and an assembly map
$$
A_\phi\colon H_k(\Gamma_{1,s})\longrightarrow H_k(\Gamma_{2,s-2}).
$$
Let us show that this  $A_\phi$ is zero when $k>0$.

The map $A_\phi$ is an $\SG_{s-2}$-module map, where $\SG_{s-2}$ is the subgroup of $\SG_s$ which permutes the unglued leaves.
For $k>0$ either the domain or the range of $A_\phi$ is zero unless $k$ is a multiple of $4$, by Proposition~\ref{prop:rankone} and  Theorem~\ref{thm:ranktwo}. If $k=4\ell>0$, then $H_{4\ell}(\Gamma_{1,s})=P_{(s-4\ell,1^{4\ell})}$.  Restriction from $\SG_s$ to $\SG_{s-2}$ removes two boxes from the Young diagram for $P_{(s-4\ell,1^{4\ell})}$, so as an $\SG_{s-2}$-module the domain of $A_\phi$  is
$$
P_{(s-4\ell-2,1^{4\ell})} \oplus 2P_{(s-4\ell-1,1^{4\ell-1})} \oplus P_{(s-4\ell,1^{4\ell-2})}.
$$
(though if $s -4\ell<3$ some of the terms are not there).  On the other hand, by Theorem~\ref{thm:ranktwo} all partitions in $H_{4\ell}(\Gamma_{2,s-2})$ contain at least $2\ell$ boxes in the second column, so none of these modules appears in $H_{4\ell}(\Gamma_{2,s-2})$ and $A_\phi$ must be zero.

\begin{figure}
\begin{center}
\begin{tikzpicture}[scale=.6]
\begin{tikzrankone}{10cm}{0cm}
\tikzhair{r1}{30}
\tikzhair{r2}{-30}
\tikzhair{p1}{110}
\tikzhair{p2}{130}
\tikzhair{p3}{150}
\tikzhair{p4}{-150}
\tikzhair{p5}{-130}
\tikzhair{p6}{-110}
\end{tikzrankone}
\draw [dotted,->-=0.5] (r1) to[out=30, in=-30, looseness=4] node[right]{$\phi$}  (r2);
\path (p3) to node[sloped]{$\dots$}  (p4);
\end{tikzpicture}
\end{center}
\caption{A self-gluing of $\Gr_{1,s}$ }
\label{fig:X1s-self}
\end{figure}
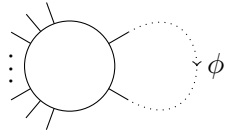

\subsection{Rank $2$}
\label{subsec:GammaTwo2N}
For any positive $s_1$ and $s_2$ we can join   $\Gr_{1,s_1}$ to $\Gr_{1,s_2}$ by connecting one pair of leaves.  We obtain a graph $X_\phi$ of rank $2$ with $s_1+s_2-2$ leaves  and  assembly maps
$$
A_\phi\colon H_{2k_1}(\Gamma_{1,s_1})\otimes H_{2k_2}(\Gamma_{1,s_2})\longrightarrow H_{2k_1+2k_2}(\Gamma_{2,s_1+s_2-2}).
$$
If $k_1+k_2$ is odd then $H_{2k_1+2k_2}(\Gamma_{2,s_1+s_2-2})=0$,  so the map is obviously trivial.  In fact this map is trivial unless $k_1=k_2$, in which case it is nontrivial. This follows immediately from Proposition~\ref{prop:alphaIJ} and Remark~\ref{remark:alpha-unequalsize}.

In the special case $s_i=2k_i+1$  and $k_1\neq k_2$ there is an alternative argument for proving the assembly map is zero using representation theory.
From the discussion in the beginning of the section the assembly map induces a map
$$
\Res_{\SG_{2k_1}}^{\SG_{2k_1+1}} \big(H_{2k_1}(\Gamma_{1,2k_1+1})\big)
\circ
\Res_{\SG_{2k_2}}^{\SG_{2k_2+1}} \big(H_{2k_2}(\Gamma_{1,2k_2+1})\big)
\longrightarrow
H_{2k_1+2k_2}(\Gamma_{2,2k_1+2k_2}).
$$
By Proposition~\ref{prop:rankone} and Theorem~\ref{thm:ranktwo} we have
$H_{2k_i}(\Gamma_{1,2k_i+1})=P_{(1^{2k_i+1})}$ and
$H_{2k_1+2k_2}(\Gamma_{2,2k_1+2k_2})=P_{(2^{k_1+k_2})}$, so that the induced map is
$
P_{(1^{2k_1})} \circ P_{(1^{2k_2})} \to P_{(2^{k_1+k_2})}
$.
Since $k_1\neq k_2$ there is no way to add $2k_2$ boxes to distinct rows in the Young diagram for
 $P_{(1^{2k_1})}$ to obtain the diagram for $P_{(2^{k_1+k_2})}$, which means that the decomposition of
$P_{(1^{2k_1})} \circ P_{(1^{2k_2})}$ does not contain $P_{(2^{k_1+k_2})}$.
So the map $\hat A_\phi$ (and hence the assembly map $A_\phi$) must be zero.
One can use a similar argument  when $s_i \neq 2k_i+1$ but this requires the full Littlewood-Richardson rule instead of the much easier
Pieri rule.

\subsection{Generalized Morita Classes}
\label{subsec:genMorita}

In~\cite{CVMorita}, Morita's original series was generalized, and it is not hard to describe the generalization in terms of assembly maps arising from gluing together graphs of rank $0$ and rank $1$.  Suppose we are given a finite connected graph $G$
with no valence $2$ vertices, along with a partition of its non-leaf vertices into two subsets $V_0$ and $V_1$ such that all vertices in $V_1$ have odd valence.  Take a copy $X_v$ of $X_{1,2k+1}$ for each vertex  $v$ in $V_1$ of valence $2k+1$, and identify the leaves of $X_v$ with the edges of $G$ incident to $v$.  Similarly for each vertex in $V_0$ of valence $k$ take a copy of $X_{0,k}$.  The graph $G$ then gives gluing instructions for constructing a graph $X_{n,s}$ and a corresponding assembly map.  (The $s$ leaves of $X_{n,s}$ come from the valence $1$ vertices of $G$.)  By assembling copies of $\alpha_k\in H_{2k}(\Gamma_{1,2k+1})$ at the valence $2k+1$ vertices in $V_1$, along with standard generators for $H_0(\Gamma_{0,k})$ at the valence $k$ vertices in $V_0$, we obtain a \emph{generalized Morita class\/}  $\mu_G\in H_i(\Gamma_{n,s})$. The original Morita classes are the case that $G$ has two vertices, both in $V_1$, with all edges going from one vertex to the other.

Vertices in $V_1$ are called \emph{rank one\/} vertices, and vertices in $V_0$ are \emph{rank zero\/} vertices. There is no loss of generality in assuming that all rank zero vertices are isolated, in the sense that no edge of $G$ connects two different rank zero vertices, since such edges can be collapsed one by one without affecting the class $\mu_G$.

If the graph $G$ has certain ``orientation-reversing" symmetries, for example if it has an edge with both ends at the same vertex in $V_1$, then the class $\mu_G$ is automatically zero; this is spelled out in detail in~\cite{CVMorita} in the case $s=0$, which is the case considered there.
The following result shows that $\mu_G$ vanishes in many other cases as well.

\begin{theorem}
\label{thm:GenMorita}
If  $G$ has two rank one vertices of different valence then $\mu_G =0$.
\end{theorem}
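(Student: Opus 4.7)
The plan is to reduce the theorem to the rank-$2$ vanishing provided by Remark~\ref{remark:alpha-unequalsize} by factoring the assembly map for $G$ through a rank-$2$ sub-assembly.

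First I would establish a combinatorial claim. Choose any path in $G$ from $v_1$ to $v_2$ and list the rank-one vertices it meets, in order. The first entry has valence $2k_1+1$ and the last has valence $2k_2+1$, with $k_1\ne k_2$, so somewhere along the list the valence changes. Picking two consecutive entries of different valences produces rank-one vertices $w, w'$ of valences $2a+1$ and $2b+1$ with $a\ne b$ whose connecting path segment passes through only $V_0$-vertices. Let $H\subset G$ be the subgraph consisting of $w$, $w'$, and this segment.

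By associativity of assembly maps (Section~\ref{sec:gluing}), the assembly $A_G$ producing $\mu_G$ factors through the partial assembly $A_H$ obtained by snipping every edge of $G$ that joins $H$ to its complement. The input to $A_H$ is $\alpha_a\otimes\alpha_b$ tensored with the trivial generators of $H_0$ at the interior $V_0$-vertices of $H$, and its output is a class $\eta_H \in H_{2(a+b)}(\Gamma_{2,s_H})$, where $s_H=2a+2b+e$ counts the unused leaves of $X_w$, $X_{w'}$, and the interior rank-zero pieces. Identifying $X_H$ with a model for $\Gr_{2,s_H}$ (its rank is $2$ because its only loops come from $w$ and $w'$), the abelian subgroup $\Z^{2a}\times\Z^{2b}$ of $\Gamma_{1,2a+1}\times\Gamma_{1,2b+1}$ supporting $\alpha_a\otimes\alpha_b$ maps into the subgroup $A_{I,J}\subset F_2^{s_H}$ of Proposition~\ref{prop:alphaIJ}, with $I$ indexing the leaves of $X_H$ inherited from $X_w$ and $J$ those from $X_{w'}$, so $|I|=2a$ and $|J|=2b$. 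Thus $\eta_H$ is precisely the class $\alpha_{I,J}$, and since $|I|\ne|J|$, Remark~\ref{remark:alpha-unequalsize} gives $\eta_H=0$. Associativity then propagates this zero to $\mu_G=0$.

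The hardest step is the combinatorial one. If every path from $v_1$ to $v_2$ in $G$ must thread through additional rank-one vertices, then no rank-$2$ sub-assembly involving $v_1$ and $v_2$ directly is available, and one must shift attention to a different pair. The pigeonhole argument above works, but it relies on choosing a \emph{simple path} (so that every rank-one vertex appears at most once in the list) rather than an arbitrary walk.
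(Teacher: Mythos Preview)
Your proof is correct and follows essentially the same route as the paper's. Both arguments locate two rank-one vertices of different valence joined by a path through $V_0$, factor the assembly for $G$ through the rank-$2$ sub-assembly over that path, and then invoke the vanishing of the resulting class; the paper phrases this last step as an appeal to Section~\ref{subsec:GammaTwo2N}, which in turn rests on Proposition~\ref{prop:alphaIJ} and Remark~\ref{remark:alpha-unequalsize}, while you cite Remark~\ref{remark:alpha-unequalsize} directly after identifying $\eta_H$ with $\alpha_{I,J}$. One small point worth making explicit in your write-up: the identification of the image of $\Z^{2a}\times\Z^{2b}$ with $A_{I,J}$ uses the freedom to choose the glued leaf of each $X_{1,2k+1}$ as the ``reference'' leaf (so that the remaining $2k$ generators wrap exactly the leaves indexed by $I$ or $J$ around their respective loops), and the map is then an isomorphism onto $A_{I,J}$, not merely an inclusion.
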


\begin{proof}
The graph $G$ contains a path connecting two rank one vertices of different valence and passing only through rank zero vertices.
Using gluing instructions given by this linear subgraph we obtain a graph $X_\phi=X_{2,s}$ for some $s>0$.  We can also obtain an $X_{2,s}$ by first gluing one of the rank 1 graphs and all of the rank 0 graphs to obtain a graph $X_{1,s_1}$, then gluing $X_{1,s_1}$ to the other rank 1 graph $X_{1,s_2}$. Call the latter gluing $\psi$.  The assembly map $A_\phi$ factors through $A_\psi$, and we showed in Section~\ref{subsec:GammaTwo2N} that $A_\psi$ is zero since the two classes assembled by it have different homology degrees, by the hypothesis that the original two rank one vertices have different valence.
\end{proof}

\subsection{Eisenstein classes}
\label{subsec:eisenstein}
Consider a gluing $\phi$  that attaches leaves of $\Gr_{2,2k+2}$ to leaves of $\Gr_{1,2k+1}$, leaving one leaf of $\Gr_{2,2k+2}$  unpaired.  This gluing determines a map
$\Gamma_{2,2k+2}\times\Gamma_{1,2k+1}\to \Gamma_{2k+3,1}=\Aut(F_{2k+3})$
and an assembly map
$$
A_\phi:H_{2k+3}(\Gamma_{2,2k+2})\otimes H_{2k}(\Gamma_{1,2k+1})\longrightarrow H_{4k+3}\big(\Aut(F_{2k+3})\big).
$$
In Section~\ref{subsec:Rank2odd} we constructed classes $m_{I,J} \in H_{2k+3}(\Gamma_{2,2k+2})$ corresponding to a partition of $\{1,\dots,2k+2\}$ into sets $I$ and $J$ with $|I|=2$ and $|J|=2k$. (Note that the parameter $k$ now corresponds to $k-1$ in Section~\ref{subsec:Rank2odd}.) Choose $I$ to consist of the unglued leaf of $X_{2,2k+2}$ and one other leaf, with $J$ the remaining leaves.  Then the  {\em Eisenstein class} $\mathcal{E}_k$ is the image of $m_{I,J} \otimes \alpha_k$ under $A_\phi$.  This does not depend on the choice of the other leaf in $I$ or the ordering of the leaves in $J$ since permutations of the glued leaves in $X_{2,2k+2}$ become inner automorphisms of $\Aut(F_{2k+3})$ which therefore induce the identity on homology.

We choose $I$ to contain the unglued leaf because if we did not, then $\mathcal{E}_k$ would automatically be zero by the following symmetry argument. If both leaves in $X_{2,2k+2}$ indexed by $I$ were glued to leaves of $X_{1,2k+1}$ then the transposition switching these two leaves would extend to a transposition of two edges of the glued-together graph $X_{2k+3,1}$. This transposition sends $m_{I,J}\otimes\alpha_k$ to its negative since it preserves $m_{I,J}$ as we noted in Section~\ref{subsec:Rank2odd} and it sends $\alpha_k$ to its negative.  On the other hand, after gluing, the transposition gives an inner automorphism of $\Aut(F_{2k+3})$ inducing the identity on homology.  Choosing $I$ to contain the unglued leaf has the effect of breaking this symmetry, so $\mathcal{E}_k$ does not vanish for any obvious reason.  (Permutations of the leaves in $J$ act trivially on $\mathcal{E}_k$ since they act by their sign on both $m_{I,J}$ and $\alpha_k$.)

An alternative construction would be to use the class $m_i$ defined after the proof of Proposition~\ref{prop:oddtwo} instead of $m_{I,J}$.  Permutations of the glued leaves change $m_i$ by the sign of the permutation, and the same is true for $\alpha_k$, so $m_i\otimes \alpha_k$ is invariant under these permutations. From the definition of $m_i$ it follows that using $m_i$ instead of $m_{I,J}$ changes $\mathcal{E}_k$ only by a nonzero scalar multiple.

\begin{remark}\label{rmk:eisenstein} The Eisenstein class $\mathcal{E}_k$ maps to zero in $H_{4k+3}\big(\Out(F_{2k+3})\big)$ since the map $\Gamma_{2k+3,1}\to\Gamma_{2k+3,0}$ is induced by forgetting the leaf of $\Gr_{2k+3,1}$, and this leaf could just as well be omitted from $\Gr_{2,2k+2}$ before the gluing, but this puts the class $m_{i}$ in a dimension above the $\vcd$ of $\Gamma_{2,2k+1}$. This argument applies more generally whenever one has an assembly map with target $H_i(\Gamma_{n,1})$ and a source factor $H_{\vcd}(\Gamma_{n_j,s_j})$ whose graph $\Gr_{n_j,s_j}$ is the one with the unglued leaf.
\end{remark}

\subsection{Odd-dimensional classes in $H_*\big(\Out(F_n)\big)$.}
\label{subsec:odd}
 The Euler characteristic calculations for $n\leq 11$ imply that there must exist odd-dimensional classes in  $H_*\big(\Out(F_n)\big)$, probably in great abundance as $n$ increases.  However, only one such class has been found so  it becomes an  interesting challenge to find  nontrivial odd-dimensional classes in a systematic way. 

A sequence of candidates for such classes was introduced by Morita, Sakasai, and Suzuki in~\cite{MSS1}, Proposition~6.3.  These are classes $\gamma_k\in H_{4k+7}\big(\Out(F_{2k+6})\big)$ for $k\geq 1$. The class $\gamma_k$ can be interpreted as gluing $\Gr_{1,2k+3}\cup \Gr_{1,2k+1}\cup \Gr_{2,4}\to \Gr_{2k+6,0}$ (illustrated for $k=1$ and $2$ in Figure~\ref{fig:MSS}) and considering the image of the class $\alpha_{k+1}\otimes \alpha_{k}\otimes m_i$, where  $m_i$ is the class defined after the proof of Proposition~\ref{prop:oddtwo}, with $i$ labeling the leaf of $\Gr_{2,4}$ attached to $\Gr_{1,2k+1}$.
However, since the image of $\alpha_{k+1}\otimes \alpha_k$ in $H_{4k+2}(\Gamma_{2,4k-2})$ is trivial by Section~\ref{subsec:GammaTwo2N}, these classes must also be zero by associativity of the assembly map.

\begin{figure}
\begin{center}
\begin{tikzpicture}[scale=.6]
\begin{tikzrankone}{0cm}{0cm}
\tikzhair{sl3}{25}
\tikzhair{sl4}{10}
\tikzhair{sl5}{-80}
\end{tikzrankone}
\begin{tikzrankone}{8cm}{0cm}
\tikzhair{sr3}{155}
\tikzhair{sr4}{170}
\tikzhair{sr5}{-120}
\tikzhair{sr6}{-105}
\tikzhair{sr7}{-90}
\end{tikzrankone}
\begin{tikzranktwo}{4cm}{-3cm}
\tikzhair{sm1}{170}
\tikzhair{sm2}{30}
\tikzhair{sm3}{10}
\tikzhair{sm4}{-10}
\end{tikzranktwo}
\draw [dotted] (sl3) to [out=25, in=155] (sr3);
\draw [dotted] (sl4) to [out=10, in=170] (sr4);
\draw [dotted] (sl5) to [out=-80, in=170] (sm1);
\draw [dotted] (sr5) to [out=-120, in=30] (sm2);
\draw [dotted] (sr6) to [out=-105, in=10] (sm3);
\draw [dotted] (sr7) to [out=-90, in=-10] (sm4);
\begin{scope}[xshift=13cm]
\begin{tikzrankone}{0cm}{0cm}
\tikzhair{l1}{55}
\tikzhair{l2}{40}
\tikzhair{l3}{25}
\tikzhair{l4}{10}
\tikzhair{l5}{-80}
\end{tikzrankone}
\begin{tikzrankone}{8cm}{0cm}
\tikzhair{r1}{125}
\tikzhair{r2}{140}
\tikzhair{r3}{155}
\tikzhair{r4}{170}
\tikzhair{r5}{-120}
\tikzhair{r6}{-105}
\tikzhair{r7}{-90}
\end{tikzrankone}
\begin{tikzranktwo}{4cm}{-3cm}
\tikzhair{m1}{170}
\tikzhair{m2}{30}
\tikzhair{m3}{10}
\tikzhair{m4}{-10}
\end{tikzranktwo}
\draw [dotted] (l1) to [out=55, in=125] (r1);
\draw [dotted] (l2) to [out=40, in=140] (r2);
\draw [dotted] (l3) to [out=25, in=155] (r3);
\draw [dotted] (l4) to [out=10, in=170] (r4);
\draw [dotted] (l5) to [out=-80, in=170] (m1);
\draw [dotted] (r5) to [out=-120, in=30] (m2);
\draw [dotted] (r6) to [out=-105, in=10] (m3);
\draw [dotted] (r7) to [out=-90, in=-10] (m4);
\end{scope}
 \end{tikzpicture}
 \caption{On the left: Assembling $\gamma_1\in H_{11}\big(\Out(F_{8})\big)$ from $H_{2}(\Gamma_{1,3})$, $H_4(\Gamma_{1,5})$, and $H_5(\Gamma_{2,4})$. On the right: Assembling $\gamma_2\in H_{15}\big(\Out(F_{10})\big)$
from $H_{4}(\Gamma_{1,5})$, $H_6(\Gamma_{1,7})$, and $H_5(\Gamma_{2,4})$ }
\label{fig:MSS}
\end{center}
\end{figure}
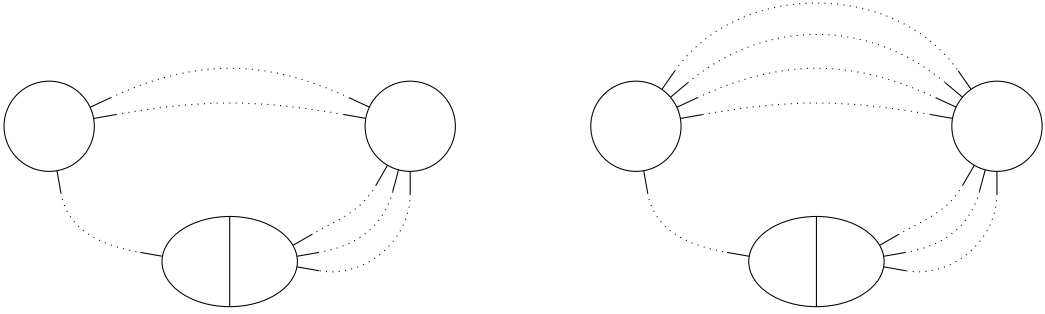

The class $\gamma_1\in H_{11}\big(\Out(F_8)\big)$ was introduced to account for the fact that $\mu_3\in H_{12}\big(\Out(F_8)\big)$ is nonzero while the Euler characteristic of $\Out(F_8)$ is $1$, so an odd-dimensional class must exist. An alternative candidate class in $H_{11}\big(\Out(F_8)\big)$ can be constructed by assembling two copies of $\alpha_1\in H_2(\Gamma_{1,3})$ with a class in $H_7(\Gamma_{2,6})$ by gluing all the leaves of two copies of $\Gr_{1,3}$ to one copy of $\Gr_{2,6}$ as shown in the left half of Figure~\ref{fig:candidate}.  Another possibility is to glue all the leaves of $\Gr_{1,1}$ and $\Gr_{1,5}$ to the leaves of $\Gr_{2,6}$, obtaining an assembly map $H_{0}(\Gamma_{1,1})\otimes H_{7}(\Gamma_{2,6})\otimes H_{4}(\Gamma_{1,5})\to H_{11}(\Out(F_8)\big)$ as in the right half of Figure~\ref{fig:candidate}.  However, this assembly map may well be zero since it produces classes that lift to classes in $H_{11}\big(\Aut(F_8)\big)$ that are in the image of the stabilization $H_{11}\big(\Aut(F_7)\big)\to H_{11}\big(\Aut(F_8)\big)$ (see Section~\ref{subsec:stab_n}) so if they were nonzero they would give counterexamples to Conjecture~\ref{conj:D}.

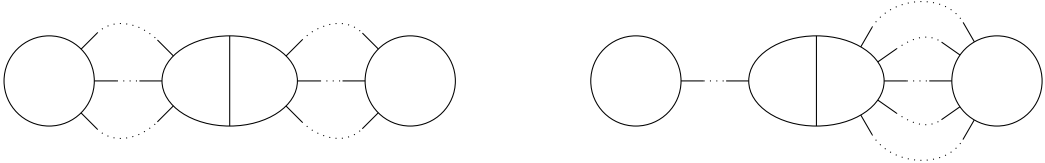
\begin{figure}[b]
\begin{center}
\begin{tikzpicture}[scale=.6]
\begin{tikzrankone}{0cm}{0cm}
\tikzhair{tl1}{45}
\tikzhair{tl2}{0}
\tikzhair{tl3}{-45}
\end{tikzrankone}
\begin{tikzranktwo}{4cm}{0cm}
\tikzhair{tm1}{135}
\tikzhair{tm2}{180}
\tikzhair{tm3}{-135}
\tikzhair{tm4}{45}
\tikzhair{tm5}{0}
\tikzhair{tm6}{-45}
\end{tikzranktwo}
\begin{tikzrankone}{8cm}{0cm}
\tikzhair{tr1}{135}
\tikzhair{tr2}{180}
\tikzhair{tr3}{-135}
\end{tikzrankone}
\draw [dotted] (tl1) to  [out=45, in=135] (tm1);
\draw [dotted] (tl2) to  [out=0, in=180] (tm2);
\draw [dotted] (tl3) to  [out=-45, in=-135] (tm3);
\draw [dotted] (tm4) to  [out=45, in=135] (tr1);
\draw [dotted] (tm5) to  [out=0, in=180] (tr2);
\draw [dotted] (tm6) to  [out=-45, in=-135] (tr3);
\begin{scope}[xshift=13cm]
\begin{tikzrankone}{0cm}{0cm}
\tikzhair{bl1}{0}
\end{tikzrankone}
\begin{tikzranktwo}{4cm}{0cm}
\tikzhair{bm1}{180}
\tikzhair{bm2}{60}
\tikzhair{bm3}{35}
\tikzhair{bm4}{0}
\tikzhair{bm5}{-35}
\tikzhair{bm6}{-60}
\end{tikzranktwo}
\begin{tikzrankone}{8cm}{0cm}
\tikzhair{br1}{120}
\tikzhair{br2}{145}
\tikzhair{br3}{180}
\tikzhair{br4}{-145}
\tikzhair{br5}{-120}
\end{tikzrankone}
\draw [dotted] (bl1) to  [out=0, in=180] (bm1);
\draw [dotted] (bm2) to  [out=60, in=120] (br1);
\draw [dotted] (bm3) to  [out=35, in=145] (br2);
\draw [dotted] (bm4) to  [out=0, in=180] (br3);
\draw [dotted] (bm5) to  [out=-35, in=-145] (br4);
\draw [dotted] (bm6) to  [out=-60, in=-120] (br5);
\end{scope}
\end{tikzpicture}
\caption{Candidates for a nontrivial class in $H_{11}\big(\Out(F_{8})\big)$}
\label{fig:candidate}
\end{center}
\end{figure}

\begin{figure}
\begin{center}
\begin{tikzpicture}[scale=.6]
\begin{tikzrankone}{0cm}{0cm}
\tikzhair{tl1}{45}
\tikzhair{tl2}{0}
\tikzhair{tl3}{-45}
\end{tikzrankone}
\begin{tikzranktwo}{4cm}{0cm}
\tikzhair{tm1}{135}
\tikzhair{tm2}{180}
\tikzhair{tm3}{-135}
\tikzhair{tm4}{60}
\tikzhair{tm5}{35}
\tikzhair{tm6}{0}
\tikzhair{tm7}{-35}
\tikzhair{tm8}{-60}
\end{tikzranktwo}
\begin{tikzrankone}{8cm}{0cm}
\tikzhair{tr1}{120}
\tikzhair{tr2}{145}
\tikzhair{tr3}{180}
\tikzhair{tr4}{-145}
\tikzhair{tr5}{-120}
\end{tikzrankone}
\draw [dotted] (tl1) to  [out=45, in=135] (tm1);
\draw [dotted] (tl2) to  [out=0, in=180] (tm2);
\draw [dotted] (tl3) to  [out=-45, in=-135] (tm3);
\draw [dotted] (tm4) to  [out=60, in=120] (tr1);
\draw [dotted] (tm5) to  [out=35, in=145] (tr2);
\draw [dotted] (tm6) to  [out=0, in=185] (tr3);
\draw [dotted] (tm7) to  [out=-35, in=-145] (tr4);
\draw [dotted] (tm8) to  [out=-60, in=-120] (tr5);
\begin{scope}[xshift=13cm]
\begin{tikzrankone}{0cm}{0cm}
\tikzhair{bl1}{60}
\tikzhair{bl2}{40}
\tikzhair{bl3}{20}
\tikzhair{bl4}{0}
\tikzhair{bl5}{-20}
\tikzhair{bl6}{-40}
\tikzhair{bl7}{-60}
\end{tikzrankone}
\begin{tikzranktwo}{4cm}{0cm}
\tikzhair{bm1}{0}
\tikzhair{bm2}{120}
\tikzhair{bm3}{140}
\tikzhair{bm4}{160}
\tikzhair{bm5}{180}
\tikzhair{bm6}{-160}
\tikzhair{bm7}{-140}
\tikzhair{bm8}{-120}
\end{tikzranktwo}
\begin{tikzrankone}{8cm}{0cm}
\tikzhair{br1}{180}
\end{tikzrankone}
\draw [dotted] (bm1) to  [out=0, in=180] (br1);
\draw [dotted] (bl1) to  [out=60, in=120] (bm2);
\draw [dotted] (bl2) to  [out=40, in=140] (bm3);
\draw [dotted] (bl3) to  [out=20, in=160] (bm4);
\draw [dotted] (bl4) to  [out=0, in=180] (bm5);
\draw [dotted] (bl5) to  [out=-20, in=-160] (bm6);
\draw [dotted] (bl6) to  [out=-40, in=-140] (bm7);
\draw [dotted] (bl7) to  [out=-60, in=-120] (bm8);
\end{scope}
%
\begin{scope}[yshift=-4cm]
\begin{tikzrankone}{1cm}{0cm}
\tikzhair{tf1}{20}
\tikzhair{tf2}{0}
\tikzhair{tf3}{-60}
\tikzhair{tf4}{-80}
\tikzhair{tf5}{-100}
\end{tikzrankone}
\begin{tikzrankone}{7cm}{0cm}
\tikzhair{ts1}{160}
\tikzhair{ts2}{180}
\tikzhair{ts3}{-120}
\tikzhair{ts4}{-100}
\tikzhair{ts5}{-80}
\end{tikzrankone}
\begin{tikzranktwo}{4cm}{-2.7cm}
\tikzhair{tt1}{140}
\tikzhair{tt2}{160}
\tikzhair{tt3}{180}
\tikzhair{tt4}{40}
\tikzhair{tt5}{20}
\tikzhair{tt6}{0}
\end{tikzranktwo}
\draw [dotted] (tf1) to  [out=20, in=160] (ts1);
\draw [dotted] (tf2) to  [out=0, in=180] (ts2);
\draw [dotted] (tf3) to  [out=-60, in=140] (tt1);
\draw [dotted] (tf4) to  [out=-80, in=160] (tt2);
\draw [dotted] (tf5) to  [out=-100, in=180] (tt3);
\draw [dotted] (ts3) to  [out=-120, in=40] (tt4);
\draw [dotted] (ts4) to  [out=-100, in=20] (tt5);
\draw [dotted] (ts5) to  [out=-80, in=0] (tt6);
\begin{scope}[xshift=13.5cm]
\begin{tikzrankzero}{4.5cm}{1.5cm}
\tikzhair{ff1}{180}
\tikzhair{ff2}{20}
\tikzhair{ff3}{-15}
\end{tikzrankzero}
\begin{tikzrankone}{1cm}{-2.2cm}
\tikzhair{fs1}{100}
\tikzhair{fs2}{30}
\tikzhair{fs3}{15}
\tikzhair{fs4}{0}
\tikzhair{fs5}{-15}
\tikzhair{fs6}{-30}
\tikzhair{fs7}{-45}
\end{tikzrankone}
\begin{tikzranktwo}{7cm}{-2.2cm}
\tikzhair{ft1}{70}
\tikzhair{ft2}{85}
\tikzhair{ft3}{150}
\tikzhair{ft4}{165}
\tikzhair{ft5}{180}
\tikzhair{ft6}{-165}
\tikzhair{ft7}{-150}
\tikzhair{ft8}{-135}
\end{tikzranktwo}
\draw [dotted] (ff1) to  [out=180, in=100] (fs1);
\draw [dotted] (ff2) to  [out=20, in=70] (ft1);
\draw [dotted] (ff3) to  [out=-15, in=85] (ft2);
\draw [dotted] (fs2) to  [out=30, in=150] (ft3);
\draw [dotted] (fs3) to  [out=15, in=165] (ft4);
\draw [dotted] (fs4) to  [out=0, in=180] (ft5);
\draw [dotted] (fs5) to  [out=-15, in=-165] (ft6);
\draw [dotted] (fs6) to  [out=-30, in=-150] (ft7);
\draw [dotted] (fs7) to  [out=-45, in=-135] (ft8);
\end{scope}
\end{scope}
\end{tikzpicture}
\caption{Candidates for a nontrivial class in $H_{15}\big(\Out(F_{10})\big)$}
\label{fig:candidate2}
\end{center}
\end{figure}
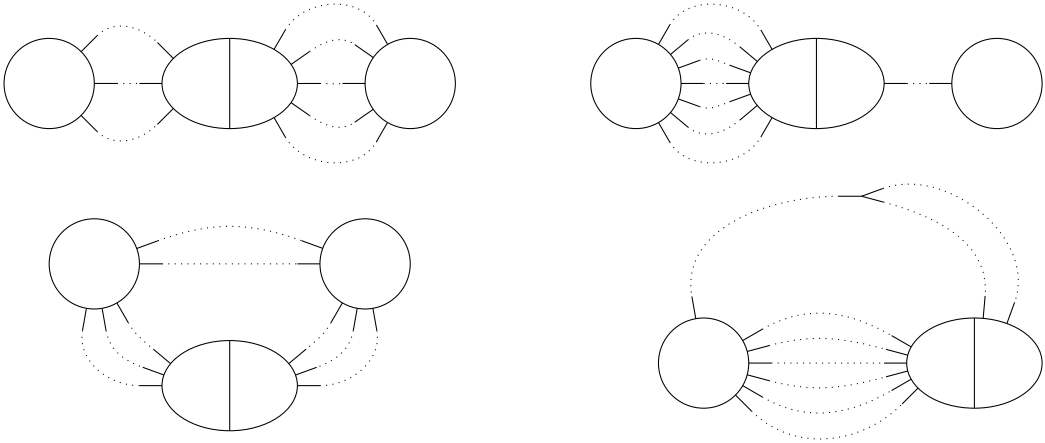

A few other ways to construct candidates for odd-dimensional classes are shown in Figure~\ref{fig:candidate2}, this time for classes in $H_{15}\big(\Out(F_{10})\big)$.

There are many other ways to construct candidates for odd-dimensional homology classes.
If we glue $\Gr_{1,s}$ to $\Gr_{2,s}$ by joining all of their leaves, we obtain a rank $s+2$ graph with no leaves and an associated assembly map
$$
H_2(\Gamma_{1,s}) \otimes H_5(\Gamma_{2,s}) \longrightarrow H_7(\Gamma_{s+2,0}).
$$

For $s \geq 4$ we have  $H_2(\Gamma_{1,s}) = P_{(s-2,1^2)}$. The decomposition of
$H_5(\Gamma_{2,s})=W_4\circ P_{(s-4)}=P_{(2,1^2)}\circ P_{(s-4)}$ has several terms but one ofxhem is $P_{(s-2,1^2)}$, with multiplicity one. Thus the space of coinvariants
 $
\big( H_2(\Gamma_{1,s}) \otimes H_5(\Gamma_{2,s}) \big)_{\SG_s}
$
is $1$-dimensional and the assembly map, which factors through these coinvariants, may well be nontrivial.
This construction produces potential classes in $H_7\big(\Out(F_n)\big)$ for all $n \geq 6$, although by homology stability these classes must be trivial for $n\geq 10$. According to the calculations in~\cite{Ohashi} and~\cite{Bartholdi} the classes for $n=6,7$ also vanish.

We can make a similar construction with $H_{2k}(\Gamma_{1,s})$ and $H_{2k+3}(\Gamma_{2,s})$ as long as $s\geq 2k+2$ using the partition $(s-2k,1^{2k})$. We have $H_{2k}(\Gamma_{1,s})= P_{(s-2k,1^{2k})}$ and $H_{2k+3}(\Gamma_{2,s})= W_{2k+2}\circ P_{(s-2k-2)}$.  The second term of $W_{2k+2}$ is $M_{2k+2}\otimes P_{(2,1^2k)}$, and one term of  $P_{(2,1^2k)} \circ P_{(s-2k-2)}$ is equal to $P_{(s-2k,1^{2k})}$.  Thus we obtain a map from $\mathcal{M}_{2k+2}$ to $H_{4k+3}\big(\Out(F_{s+2})\big)$.
For large $s$ this map must be trivial since the target group is trivial.
In fact we suspect that the map is trivial for $s\geq 2k+4$ but not for
$s=2k+3$.  For $s=2k+3$ the image lies in $H_{4k+3}\big(\Out(F_{2k+5})\big)$.
Since the $\vcd$ of $\Out(F_{2k+5})$ is $4k+7$,  these classes lie in codimension 4.

If $s\geq 2k+1$ there is another similar construction with $H_{2k}(\Gamma_{1,s})$ and $H_{2k+1}(\Gamma_{2,s})$,  again using the partition $(s-2k,1^{2k})$.  Here $H_{2k+1}(\Gamma_{2,s})$ contains the summand $S_{2k+2}\otimes P_{(1^{2k})} \circ P_{s-2k}$.  Since $P_{(1^{2k})} \circ P_{(s-2k)}$ contains a copy of $P_{(s-2k,1^{2k})}$ we get a map from $\mathcal{S}_{2k+2}$ to $H_{4k+1}\big(\Out(F_{s+2})\big)$. The first potentially nontrivial class occurs when $k = 5$,
in $H_{21}\big(\Out(F_{13})\big)$;
this class is in $H_{\vcd-2}$.

\subsection{Classes from pairs of cusp forms}
\label{subsec:classes}
Consider a gluing
$$
\Gr_{2,2m}\cup \Gr_{2,2m}\longrightarrow \Gr_{2m+3,0}
$$
matching the leaves of the first graph with those of the second, and the associated assembly map
$$
H_{2m+1}(\Gamma_{2,2m})\otimes H_{2m+1}(\Gamma_{2,2m}) \longrightarrow H_{4m+2}\big(\Out(F_{2m+3})\big).
$$
There is a  $\Z_2$ action on $H_{2m+1}(\Gamma_{2,2m})\otimes H_{2m+1}(\Gamma_{2,2m})$ which switches the factors.  By basic properties of the cross product (which gives the K\"unneth isomorphism) we have
$$
a\otimes b = (-1)^{(2m+1)(2m+1)}b\otimes a = -b\otimes a,
$$
so this assembly map factors through  the exterior product $\ext^2 H_{2m+1}(\Gamma_{2,2m})$
in addition to factoring through the $\SG_{2m}$-coinvariants.

Applying Lemma~\ref{thm:GL} we now have a map
$$
\left(\ext^2 H_{2m+1}(\Gamma_{2,2m})\right)_{\SG_{2m}}
=
\left(\ext^2
    \big(
        \bigoplus_{\scriptscriptstyle 0\leq i<m}\mathcal X_{2m,i}\otimes P_{(2^i,1^{2m-2i})}
    \big)
\right)_{\SG_{2m}}
\longrightarrow
H_{4m+2}\big(\Out(F_{2m+3})\big),
$$
where $\mathcal X_{2m,i}$ is the space of either cusp forms (if $i$ is even) or all modular forms (if $i$ is odd) of weight $2m{+}2{-}2i$.
Since  the $P_{(2^i,1^{2m-2i})}$ are pairwise non-isomorphic this  gives   a map
$$
\bigoplus_{0\leq i <m}
    \left(
        {\textstyle \ext^2} \mathcal{X}_{2m,i}
    \right)
\longrightarrow
H_{4m+2}\big(\Out(F_{2m+3})\big).
$$
The term corresponding to $i=0$ was first mentioned in~\cite{CKV1}.

\section{Stabilization}
\label{sec:vanish}

In this section we consider the two ways to stabilize $\Gamma_{n,s}$ by letting one of the parameters $n$ and $s$ increase while keeping the other fixed.  Both stabilizations can be viewed as special cases of assembly maps. 

\subsection{Stabilization with respect to $s$.}
\label{subsec:stab_s}

For $s\geq 1$ the map gluing $\Gr_{0,3}$ to $\Gr_{n,s}$  by  a single edge  simply increases the number of leaves, and the associated assembly map
$$
H_0(\Gamma_{0,3})\otimes H_i(\Gamma_{n,s})\longrightarrow H_i(\Gamma_{n,s+1})
$$
gives a stabilization map $H_i(\Gamma_{n,s})\to H_i(\Gamma_{n,s+1})$ sending a class $\alpha$ to $\iota \otimes\alpha$ where $\iota$ is the standard generator of $H_0(\Gamma_{0,3})$. This is the same as the map induced by the splitting of the natural projection $\Gamma_{n,s+1}\to\Gamma_{n,s}$ defined in the proof of Proposition~\ref{thm:extension}. Strictly speaking, there are $s$ different stabilization maps depending on which leaf of $\Gr_{n,s}$ we attach $\Gr_{0,3}$ to, although these stabilizations differ only by the action of $\SG_s$.  The stabilization maps are always injective since they are induced by splittings of the groups.
By~\cite{HV} the stabilization map is surjective if $n\geq 2i+2$, but in this case the homology groups are trivial by Galatius' theorem.

Switching from homology to cohomology, Proposition~\ref{prop:rankone} and Theorem~\ref{thm:ranktwo}  demonstrate directly that for fixed $i$ the groups $H^i(\Gamma_{1,s})$ and $H^i(\Gamma_{2,s})$ satisfy {\em representation stability\/} as $s$ increases, i.e., for large enough $s$ the partitions which appear in their irreducible decompositions as  $\SG_s$-modules differ only by the number of boxes in the first row.
This leads one to suspect that the cohomology of $\Gamma_{n,s}$ satisfies representation stability for all $n$.  This is indeed the case and can be deduced easily from a theorem of Jim\'enez Rolland~\cite{JR} giving the corresponding result for mapping class groups of certain manifolds with punctures.

\begin{proposition}
\label{prop:repstab}
For fixed $i$ and $n$ the groups $H^i(\Gamma_{n,s})$ satisfy representation stability as $s$ increases.
\end{proposition}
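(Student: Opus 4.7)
The plan is to reduce the statement to Jim\'enez Rolland's theorem on representation stability for the rational cohomology of mapping class groups of manifolds whose number of punctures or boundary components varies, using the identification of $\Gamma_{n,s}$ as a $3$-manifold mapping class group noted at the end of Section~\ref{subsect:Gns}.

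First I would recall Laudenbach's theorem: writing $M_{n,s}$ for the connected sum of $n$ copies of $S^1\times S^2$ with $s$ open balls removed, there is a natural surjection $\mathrm{MCG}(M_{n,s})\to\Gamma_{n,s}$ whose kernel $T$ is the normal subgroup generated by Dehn twists along embedded $2$-spheres.  Under this identification the $\SG_s$-action on $\Gamma_{n,s}$ permuting leaves corresponds to the $\SG_s$-action on $\mathrm{MCG}(M_{n,s})$ permuting the boundary spheres, and the stabilization maps $\Gamma_{n,s}\to \Gamma_{n,s+1}$ come from inclusions $M_{n,s}\hookrightarrow M_{n,s+1}$ that remove one more ball from a small contractible region disjoint from the others.

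Next I would apply the main theorem of~\cite{JR} to the family $\mathrm{MCG}(M_{n,s})$ for fixed $n$, which yields representation stability in $s$ for the sequence of $\SG_s$-modules $H^i(\mathrm{MCG}(M_{n,s});\F)$.  To transfer this stability to $\Gamma_{n,s}$ I would consider the Lyndon-Hochschild-Serre spectral sequence of the extension
$$
1\longrightarrow T\longrightarrow \mathrm{MCG}(M_{n,s})\longrightarrow \Gamma_{n,s}\longrightarrow 1.
$$
Each generator of $T$ has order two, so $T$ is a torsion group of exponent $2$ and its rational cohomology vanishes in positive degrees.  The spectral sequence therefore collapses to an $\SG_s$-equivariant isomorphism $H^i(\Gamma_{n,s};\F)\iso H^i(\mathrm{MCG}(M_{n,s});\F)^{T}$.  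Since taking $T$-invariants is exact over $\F$ and commutes with the $\SG_s$-action and with the stabilization maps induced by $M_{n,s}\hookrightarrow M_{n,s+1}$, the representation stability of the $\mathrm{MCG}$-cohomology passes to $\Gamma_{n,s}$.

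The main obstacle is compatibility bookkeeping: one must verify that the geometric stabilization $M_{n,s}\hookrightarrow M_{n,s+1}$ intertwines the $\SG_s$-actions and the quotient maps to $\Gamma_{n,s}$ and $\Gamma_{n,s+1}$ in the precise sense required by the consistent-sequence framework of~\cite{CF}, and that the natural stabilization map one is using on $\Gamma_{n,s}$ agrees with the one coming from $H_0(\Gamma_{0,3})\otimes H_i(\Gamma_{n,s})\to H_i(\Gamma_{n,s+1})$ of Section~\ref{subsec:stab_s}.  Once this setup is in place the deduction is essentially formal, matching the authors' description of the argument as easy.
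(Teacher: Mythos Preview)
Your approach is essentially the same as the paper's: identify $\Gamma_{n,s}$ with a quotient of $\mathrm{MCG}(M_{n,s})$ by the twist subgroup $T$, show the quotient map induces an isomorphism on rational cohomology, and then invoke~\cite{JR}.  Two points deserve correction or amplification.

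First, your formulation of the spectral sequence conclusion is garbled.  In the extension $1\to T\to\mathrm{MCG}(M_{n,s})\to\Gamma_{n,s}\to 1$ the Lyndon--Hochschild--Serre spectral sequence has $E_2^{p,q}=H^p\big(\Gamma_{n,s};H^q(T;\F)\big)$ and converges to $H^{p+q}\big(\mathrm{MCG}(M_{n,s});\F\big)$.  When $H^q(T;\F)=0$ for $q>0$ this collapses to an isomorphism $H^i(\Gamma_{n,s};\F)\iso H^i\big(\mathrm{MCG}(M_{n,s});\F\big)$, not to any space of $T$-invariants; $T$ does not act on $H^i(\mathrm{MCG})$ in the way your formula suggests.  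The paper in fact observes the stronger statement that $T$ is a \emph{finite} product of copies of $\Z/2$, so the vanishing of its positive-degree rational cohomology is immediate.

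Second, the paper explicitly verifies the hypotheses needed to apply~\cite{JR} to the base manifold $M=M_{n,0}$: that $\pi_1M=F_n$ is of type $FP_\infty$ with trivial center, and that the mapping class group of $M$ is of type $FP_\infty$ (the latter deduced from $\Outn$ being $FP_\infty$ together with finiteness of the kernel).  You should include this check rather than simply asserting that~\cite{JR} applies.  With these fixes your argument coincides with the paper's, which also records the explicit stable range $s\ge 3i$.
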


\begin{proof}
As described in Section~\ref{subsect:Gns} we can view $\Gamma_{n,s}$ as the quotient of the mapping class group of the $3$-manifold $M_{n,s}$ by the subgroup generated by Dehn twists along $2$-spheres. This subgroup is normal and is just a direct product of finitely many cyclic groups of order $2$.  In particular it is a finite group so the projection from the mapping class group to $\Gamma_{n,s}$ induces an isomorphism on cohomology with coefficients in $\F$ by the Leray-Serre spectral sequence. Thus it suffices to prove representation stability for the mapping class group, and this was done in~\cite{JR}, with a specific stable range $s\geq 3i$.  To apply~\cite{JR} one uses the manifold $M=M_{n,0}$ as the base manifold, and one needs to check that this satisfies certain hypotheses: (1) $\pi_1M=
F_n$ is of type $FP_\infty$ and has trivial center, which is obviously true; and (2) the mapping class group of $M$ is of type $FP_\infty$, which follows from $\Outn$ being $FP_\infty$ and the kernel of the map from the mapping class group to $\Outn$ being finite abelian and hence of type $FP_\infty$.
\end{proof}

We remark that A. Saied has recently shown that $H^i(\Gamma_{n,s})$ satisfies representation stability with respect to $s$ whenever $s\geq n+i$ \cite{Saied}.  If $n<2i$ this is an obvious improvement on the stable range mentioned in the above proposition, and if $n\geq 2i$ the cohomology is zero for all $s$.  

\subsection{Stabilization with respect to $n$.}
\label{subsec:stab_n}

A stabilization map $H_i(\Gamma_{n,s})\to H_i(\Gamma_{n+1,s})$ can be obtained in a similar way by gluing $\Gr_{1,2}$ to $\Gr_{n,s}$ along one edge.  Here sufficiently many iterations take one to the stable range where the homology groups are zero, so the interest is in what happens unstably.  We can describe completely what happens when rank one classes are stabilized to rank two:

\begin{proposition}
\label{prop:OneStabilization}
If a class in $H_i(\Gamma_{n,s})$ is obtained from an assembly map with a factor group $H_j(\Gamma_{1,k})$, $j>0$, then this class maps to zero under the stabilization $H_i(\Gamma_{n,s})\to H_i(\Gamma_{n+1,s})$ obtained by gluing $\Gr_{1,2}$ to the corresponding factor graph $\Gr_{1,k}$.
\end{proposition}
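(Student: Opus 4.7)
The plan is to combine associativity of assembly with representation theory. By associativity of assembly maps (Section~\ref{sec:gluing}), the stabilized class can be computed by first performing the local assembly
$$
A_\phi\colon H_j(\Gamma_{1,k})\otimes H_0(\Gamma_{1,2})\longrightarrow H_j(\Gamma_{2,k})
$$
obtained by gluing one leaf of $\Gr_{1,2}$ to one leaf of $\Gr_{1,k}$, and then completing the remaining assembly into $H_i(\Gamma_{n+1,s})$. Since $H_0(\Gamma_{1,2})\iso\F$, it suffices to prove that the induced map $H_j(\Gamma_{1,k})\to H_j(\Gamma_{2,k})$ vanishes for every $j>0$.

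The degenerate cases are immediate: if $j$ is odd then the domain vanishes by Proposition~\ref{prop:rankone}, and if $j\equiv 2\pmod 4$ then the codomain vanishes by Theorem~\ref{thm:ranktwo}. This leaves $j=4m$ with $m\geq 1$, which I would handle using $\SG_{k-1}$-equivariance: the underlying group homomorphism $\Gamma_{1,k}\times\Gamma_{1,2}\to\Gamma_{2,k}$ commutes with the action of the $\SG_{k-1}$ that permutes the $k-1$ unglued leaves of $\Gr_{1,k}$, so the induced map on homology is $\SG_{k-1}$-equivariant.

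By Proposition~\ref{prop:rankone} and the branching rule,
$$
\Res^{\SG_{k}}_{\SG_{k-1}}H_{4m}(\Gamma_{1,k}) \iso P_{(k-4m-1,1^{4m})}\oplus P_{(k-4m,1^{4m-1})}
$$
(with the first summand omitted when $k-4m\leq 1$), and every appearing irreducible has at least $4m$ rows. By Theorem~\ref{thm:ranktwo} together with the explicit Pieri decomposition computed in its proof, $H_{4m}(\Gamma_{2,k})=P_{(2^{2m})}\circ P_{(k-4m)}$ is a sum of at most three irreducibles, each with at most $2m+1$ rows; restricting to $\SG_{k-1}$ removes a box and so cannot increase the row count. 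Since $4m>2m+1$ for every $m\geq 1$, no irreducible $\SG_{k-1}$-summand lies in both sides, forcing the equivariant map to vanish. The main point of the proof is this simple row-count comparison; the rest assembles pieces already developed in the paper.
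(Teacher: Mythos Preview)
Your proof is correct and follows the same overall strategy as the paper: associativity of assembly reduces the problem to showing that the stabilization $H_j(\Gamma_{1,k})\to H_j(\Gamma_{2,k})$ vanishes for $j>0$, and then representation theory of symmetric groups shows the two sides share no common irreducible summands.

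The implementation differs slightly. The paper works with the induced $\SG_k$-map $\hat A_\phi$ (restricting to $\SG_{k-1}$ and then inducing back up) and compares the number of boxes in the \emph{second column}: after restrict-then-induce, hooks yield diagrams with at most two boxes in column two, whereas $H_{4m}(\Gamma_{2,k})$ has at least $2m$ boxes there. This forces separate ad hoc treatment of $i=4$, where the column count gives $2$ on both sides and the paper instead compares row counts. Your argument stays at the $\SG_{k-1}$ level and compares \emph{row counts} throughout: the restricted hook has at least $4m$ rows, the target has at most $2m+1$, and $4m>2m+1$ for all $m\geq 1$. This handles every $j=4m$ uniformly and is marginally cleaner; the paper's column-based approach, on the other hand, extends more readily to situations where the source is not a hook.
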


An immediate consequence is the following result, first proved in~\cite{Stable} by combinatorial arguments.

\begin{corollary}  \label{cor:OneStabilization}
The Morita class $\mu_k$, lifted from $\Out$ to $\Aut$, vanishes under the stabilization map $H_{4k}(\Gamma_{2k+2,1})\to H_{4k}(\Gamma_{2k+3,1})$.
\end{corollary}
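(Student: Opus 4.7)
The plan is to derive the corollary immediately from Proposition~\ref{prop:OneStabilization}. Recall from Section~\ref{subsec:Morita} that the lifted Morita class $\hat\mu_k \in H_{4k}(\Gamma_{2k+2,1})$ is, by construction, the image of $\alpha_k \otimes \iota \otimes \alpha_k$ under the assembly map associated with the gluing $\Gr_{1,2k+1} \cup \Gr_{0,3} \cup \Gr_{1,2k+1} \to \Gr_{2k+2,1}$. The two copies of $H_{2k}(\Gamma_{1,2k+1})$ appearing as factors fit the hypothesis of Proposition~\ref{prop:OneStabilization} with $j=2k>0$, and so that proposition gives directly that $\hat\mu_k$ vanishes under the specific stabilization obtained by gluing $\Gr_{1,2}$ along a leaf of either of these $\Gr_{1,2k+1}$ factor graphs.

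The one point that still needs attention is that the corollary refers to \emph{the} stabilization map $H_{4k}(\Gamma_{2k+2,1})\to H_{4k}(\Gamma_{2k+3,1})$, whereas Proposition~\ref{prop:OneStabilization} gives a specific stabilization that attaches the new loop at an edge which is interior to the assembled graph $\Gr_{2k+2,1}$, rather than at its unique free leaf. I would resolve this by observing that any two stabilization maps $H_i(\Gamma_{n,s})\to H_i(\Gamma_{n+1,s})$ coincide on homology: they differ only by post-composition with an automorphism of $\Gamma_{n+1,s}$ that slides the newly attached loop from one attachment point to another, and this automorphism is realized by a self-homotopy-equivalence of the stabilized graph and hence is an inner automorphism of $\Gamma_{n+1,s}$. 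Since inner automorphisms induce the identity on group homology, the conclusion of Proposition~\ref{prop:OneStabilization} carries over to whichever stabilization is meant in the corollary, so $\hat\mu_k$ maps to zero. I expect no real obstacle beyond making this last identification of stabilization maps precise, which is standard.
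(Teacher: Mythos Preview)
Your overall plan—deduce the corollary from Proposition~\ref{prop:OneStabilization}—is exactly what the paper intends, and you are right to flag that the proposition, read literally with one of the $\Gr_{1,2k+1}$ factors, does not single out the standard stabilization at the unique leaf of $\Gr_{2k+2,1}$.

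However, your resolution has a genuine gap. Inserting $\Gr_{1,2}$ along an interior edge of $\Gr_{2k+2,1}$ does not define a homomorphism $\Gamma_{2k+2,1}\to\Gamma_{2k+3,1}$ at all, since the attaching point is not fixed by a general element of $\Gamma_{2k+2,1}$; so there is no second ``stabilization map'' to compare with the standard one via an inner automorphism. Even restricting to the $\Z^{4k}$ subgroup supporting $\hat\mu_k$ (where that interior point \emph{is} fixed), the two resulting inclusions $\Z^{4k}\hookrightarrow\Aut(F_{2k+3})$ are not conjugate: for $k=1$ one checks that their common fixed subgroups in $F_5$ have ranks $3$ and $2$ respectively. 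So the sliding homotopy equivalence does not intertwine the two maps by conjugation, and your argument does not go through.

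The clean fix avoids this comparison entirely, using associativity of assembly maps. First glue $\Gr_{0,3}$ to one copy of $\Gr_{1,2k+1}$; this exhibits $\hat\mu_k$ as the image of (an $s$-stabilization of $\alpha_k$) $\otimes\,\alpha_k$ under an assembly
\[
H_{2k}(\Gamma_{1,2k+2})\otimes H_{2k}(\Gamma_{1,2k+1})\longrightarrow H_{4k}(\Gamma_{2k+2,1}).
\]
Now the unique free leaf of $\Gr_{2k+2,1}$ belongs to the rank-one factor $\Gr_{1,2k+2}$, and Proposition~\ref{prop:OneStabilization} applied to this factor with $\Gr_{1,2}$ attached at that leaf is precisely the standard stabilization. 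This is presumably the ``immediate'' step the paper has in mind.
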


\begin{proof}[Proof of Proposition~\ref{prop:OneStabilization}]
It suffices to prove that the stabilization $H_i(\Gamma_{1,s})\to H_i(\Gamma_{2,s})$ is trivial for $i>0$.
The extension of   $A_\phi\colon H_i(\Gamma_{1,s}) \otimes H_0(\Gamma_{1,2}) \to H_i(\Gamma_{2,s})$ to the $\SG_s$-module map
$$
\hat A_\phi\colon
\Res^{\SG_s}_{\SG_{s-1}}\big(H_i(\Gamma_{1,s})\big)
\circ
\Res^{\SG_2}_{\SG_{1}}\big(H_0(\Gamma_{1,2})\big)
\longrightarrow H_i(\Gamma_{2,s})
$$
is accomplished by first restricting the $\SG_s$-action on $\Gamma_{1,s}$ to $\SG_{s-1}$ and then inducing it back up to~$\SG_s$.
In terms of Young diagrams, restriction to $\SG_{s-1}$ is accomplished by removing one box (in all possible ways), while induction adds a box (also in all possible ways).
Since the diagrams of all partitions appearing in $H_i(\Gamma_{1,s})$
have  boxes only in the first row and column, all resulting diagrams will have at most two boxes in the second column.

Using Theorem~\ref{thm:ranktwo} we can see that the simple modules appearing in $H_i(\Gamma_{2,s})$ for $i$ even
always have at least $i/2$ boxes in the second column.  Thus if $i\geq 6$  there is no partition which appears in both the domain and range of $\hat A_\phi$,  which forces $\hat A_\phi$ (and therefore $A_\phi$) to be zero.

For $i=2$ the target $H_2(\Gamma_{2,s})$ is always zero. For  $i=4$ the diagrams appearing in $H_4(\Gamma_{1,s})$ have  five rows, but the diagrams appearing in $H_4(\Gamma_{2,s})$ have at most three rows, so it is not possible to obtain one from the other by changing the position of a single box, and again the assembly map must be trivial.
\end{proof}

In Section~\ref{sec:geometric} below we show precisely how the class $\alpha_k$ becomes trivial after one stabilization,  using a natural geometric interpretation of this class.


\section{A more geometric viewpoint}
\label{sec:geometric}

A rational model for $B\Outn$ is the quotient $Q_n$ of Outer Space for rank $n$ graphs by the action of $\Outn$ changing the marking.  (One could instead use just the spine of Outer Space, but for our present purposes it is more convenient not to restrict to the spine.) Points of $Q_n$ are thus isometry classes of finite connected graphs of rank $n$ with no vertices of valence $1$ or $2$ and with lengths assigned to the edges, normalized so that the sum of the lengths of all the edges is $1$.  Collapsing edges to points by shrinking their lengths to zero is allowed provided this does not decrease the rank of the graph.  There is a similar rational model $Q_{n,s}$ for $B\Gamma_{n,s}$ consisting of graphs in $Q_n$ with $s$ leaves attached to them at arbitrary points.  There is no need to assign lengths to the leaf edges since they are not allowed to collapse to points.  An assembly map is induced from a map $Q_{n_1,s_1}\times\cdots\times Q_{n_k,s_k} \to Q_{n,s}$ where edge lengths on a glued-together graph $X_{n,s}$ are obtained by first assigning a fixed length, say $1$, to the new edges created by the leaf pairings, then renormalizing the lengths of all the nonleaf edges of $X_{n,s}$.

\subsection{Geometric Morita cycles}

The classes $\alpha_k\in H_{2k}(\Gamma_{1,2k+1})$ are particularly easy to describe from this perspective, and hence also the Morita classes and their generalizations.  The class $\alpha_k$ is the image of the top-dimensional homology class of a $2k$-dimensional torus under a map $f\colon T^{2k}\to Q_{1,2k+1}$ described as follows.  Consider graphs $X_{1,2k+1}$ obtained from a circle $c$ by attaching $2k+1$ leaves.  By rotating the circle if necessary, we can assume the first leaf attaches at a fixed basepoint of $c$.  The other leaves attach at $2k$ arbitrary points of $c$ which need not be distinct.  Letting these points vary independently around $c$ then gives the map $f\colon T^{2k}\to Q_{1,2k+1}$.  This is surjective but not injective since graphs differing by a reflection of $c$ fixing the basepoint are identified in $Q_{1,2k+1}$, so $Q_{1,2k+1}$ is $T^{2k}$ modulo the action of $\Z_2$ reflecting each circle factor. From this point of view one can see why we require the total number of leaves to be odd, because if it were $2k$ instead of $2k+1$ then the $\Z_2$-action would reverse the orientation of the torus $T^{2k-1}$ and hence the map $T^{2k-1}\to Q_{1,2k}$  would induce the trivial map on the top-dimensional homology of the torus.

For the map $Q_{1,2k+1} \times Q_{1,2k+1} \to Q_{2k+2,0}$ used to construct the Morita class $\mu_k =\alpha_k \otimes \alpha_k$ we glue all the leaves of the first copy of $X_{1,2k+1}$ to the leaves of the second copy.  Thus we have two circles joined by $2k+1$ edges.  One of these edges serves as a ``basepoint" edge, and then by varying where the remaining $2k$ edges attach we obtain a family of graphs $X_{2k+2,0}$ corresponding to a map
$T^{4k}\to Q_{2k+2,0}$ taking a generator of $H_{4k}(T^{4k})$ to $\mu_k$.  (The basepoint edge could be collapsed to a point, giving a map $T^{4k}\to Q_{2k+2,0}$ homotopic to the original one.)  The generalized Morita classes have similar geometric descriptions as maps from a torus to the appropriate $Q_{n,s}$.

For the Morita class $\mu_k$ the map $T^{4k}\to Q_{2k+2,0}$ is invariant under certain symmetries. To start, there is the $\Z_2\times\Z_2$ symmetry coming from the symmetries of the two $\alpha_k$ factors reflecting each of the two circles.  There is another $\Z_2$ symmetry from interchanging the two circles. Finally, there is an $\SG_{2k}$ symmetry group permuting the $2k$ arcs connecting the two circles. Altogether this gives a symmetry group $G_k$ of order $8(2k)!$ with the map $T^{4k}\to Q_{2k+2,0}$ factoring through the quotient $T^{4k}/G_k$.  One can regard $G_k$ as acting on choices of an ordering and orientations of the two circles and an ordering of the $2k$ connecting arcs.  This makes it clear that the induced map $T^{4k}/G_k\to Q_{2k+2,0}$ is injective.

In the case $k=1$ the quotient $T^4/G_1$ can be determined explicitly.  

\begin{proposition}
$T^4/G_1 = S^4$.
\end{proposition}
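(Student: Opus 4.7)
The plan is to compute $T^4/G_1$ in stages, using the normal chain $\langle r_1,r_2\rangle \triangleleft \langle r_1,r_2,\sigma\rangle \triangleleft G_1$, where $r_1,r_2$ are the reflections of the two circles, $\sigma$ interchanges the two circles, and $\tau$ permutes the two variable arcs.  Writing $T^4=T^2\times T^2$ with coordinates $(\theta_1,\theta_2)$ and $(\phi_1,\phi_2)$ giving the positions of the two variable arc endpoints on the first and second circle respectively, these four involutions act by $r_1:\theta_i\mapsto -\theta_i$, $r_2:\phi_i\mapsto -\phi_i$, $\sigma:(\theta_i,\phi_i)\mapsto(\phi_i,\theta_i)$, and $\tau:(\theta_1,\theta_2,\phi_1,\phi_2)\mapsto(\theta_2,\theta_1,\phi_2,\phi_1)$.

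Since $T^2/\{\pm 1\}$ is the classical pillowcase, homeomorphic to $S^2$, quotienting $T^4$ by $\langle r_1,r_2\rangle$ yields $S^2\times S^2$.  The involution $\sigma$ then descends to the swap on $S^2\times S^2$, whose quotient is $\Sym^2(S^2)$.  Using the identification $S^2=\C P^1$ and the classical isomorphism $\Sym^2(\C P^1)\iso \C P^2$ (sending the unordered pair $\{[a:b],[c:d]\}$ to the degree-$2$ binary form $[ac:ad+bc:bd]$), we obtain $T^4/\langle r_1,r_2,\sigma\rangle \iso \C P^2$.

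For the final quotient by $\tau$, the plan is to identify the induced involution on $\C P^2$ with complex conjugation and then apply the classical Arnold--Kuiper--Massey theorem that $\C P^2/\mathrm{conj}\iso S^4$.  The coordinate swap on each $T^2$ is an orientation-reversing involution commuting with the antipodal map, and it descends to an orientation-reversing involution $\rho$ of $S^2=T^2/\{\pm 1\}$ whose fixed set is a single circle (the image of the diagonal and antidiagonal of $T^2$); any such involution of $S^2$ is topologically conjugate to complex conjugation on $\C P^1$.  The induced involution $\Sym^2(\rho)$ on $\Sym^2(\C P^1)=\C P^2$ is then complex conjugation: in the coordinates above, replacing $a,b,c,d$ by their conjugates replaces $[ac:ad+bc:bd]$ by its conjugate.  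The main obstacle is exactly this step --- keeping track of the identifications carefully enough to see that $\tau$ really acts as complex conjugation on $\C P^2$ --- after which Arnold--Kuiper--Massey gives $T^4/G_1\iso S^4$.
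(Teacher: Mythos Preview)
Your proof is correct and follows essentially the same route as the paper's: quotient by the two reflections to get $S^2\times S^2$, then by the circle swap to get $\Sym^2(S^2)\cong\C P^2$, and finally identify the arc swap with complex conjugation on $\C P^2$ and invoke the Massey--Kuiper theorem. Your justification that the induced involution on each $S^2$ is conjugate to complex conjugation (orientation-reversing with a circle of fixed points) is slightly more explicit than the paper's, which simply asserts it is an equatorial reflection.
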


\begin{proof} The quotient of $T^4$ by the reflections of the two circles gives $S^2\times S^2$ since the quotient of $T^2$ by reflection of its two circle factors is the familiar $2$-sheeted branched covering space $T^2\to S^2$.  Next, factor out the $\Z_2$-action interchanging the two circles, corresponding to interchanging the two factors of $S^2\times S^2$. This gives the two-fold symmetric product $SP_2(S^2)$ which is well-known to be $\C P^2$.  Explicitly, $\C P^2$ can be identified with nonzero polynomials $a_2 z^2 + a_1 z + a_0$ in $\C[z]$ up to scalar multiplication, and these are determined by their unordered pair of roots in $S^2=\C \cup \infty$ where linear factors corresponding to roots at $\infty$ are deleted.  (See e.g.\ \cite{Hatcher}, Example 4K.4.) Finally we need to factor out by the $\Z_2$-action interchanging the two connecting arcs.  This corresponds to reflecting each torus $T^2$ across its diagonal.  In the quotient $S^2\times S^2$ of $T^2 \times T^2$ this is equivalent to reflecting each $S^2$ across its equator.  In the space of quadratic polynomials this is given by complex conjugation of the roots, hence also of the coefficients.  Thus we are forming the quotient of $\C P^2$ by complex conjugation. This quotient is $S^4$ by a classical result of Massey \cite{Massey} and Kuiper \cite{Kuiper}.
\end{proof}

The quotient $T^{4k}/G_k$ for $k>1$ cannot be a sphere since one can compute that its rational homology consists of a copy of $\Q$ in each dimension $4i \leq 4k$.

\medskip

We can use the geometric viewpoint to give another proof that Morita classes and their generalizations vanish after one stabilization of the rank:

\begin{proof}[Geometric proof that $\alpha_k$ vanishes after one stabilization.]
The idea is to see how the commutator relation $[e_{ij},e_{jk}]=e_{ik}$ among elementary matrices can be translated into a two-parameter family of graphs.

We can reinterpret the stabilization map as the map $Q_{1,2k+1} \to Q_{2,2k+1}$ obtained by attaching both ends of a new edge $b$ at the basepoint of the graphs $X_{1,2k+1}$ described above consisting of a circle $c$ with $2k+1$ leaves attached, where by the basepoint we mean the point of $c$ where the fixed leaf attaches. Let $a$ be any one of the remaining $2k$ leaves.  Sliding $a$ around $c$ gives one of the $S^1$ factors of the torus $T^{2k}$ whose map to $Q_{1,2k+1}$ sends a generator of $H_{2k}(T^{2k})$ to $\alpha_k$.

Figure~\ref{fig:2parafam} describes a two-parameter family of graphs in which one end of the arc $b$ moves across $c$ while one end of the arc $a$ moves across $b$ and $c$. As we proceed from left to right in the sequence of four pictures we see one end of $b$ sliding around $c$.  The dotted arc denotes the path followed by the attaching point of $a$.  Initially it just goes across $b$, then when $b$ has moved partway around $c$ the end of $a$ must backtrack across part of $c$ after it crosses $b$, in order to return to the basepoint.  In the last picture we see that $a$ crosses both $b$ and $c$.

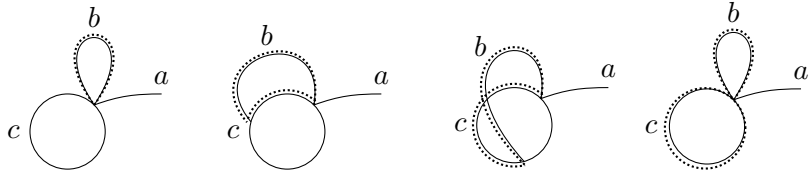
\begin{figure}
\begin{center}

 \begin{tikzpicture}[scale=.5]
\draw  (0,0) circle(1);
\node [left] (c) at (-1,0) {$c$};
\draw (45:1)..controls (2.3,3.1) and   (-.9,3.1) ..(45:1);
\draw [thick, densely dotted] (45:1)..controls (2.5,3.2) and (-1.1,3.2)..(45:1);
 \node [above] (b) at (.7,2.5) {$b$};
 \draw (45:1) to [out=20, in=180] (2.5, 1);
\node [above] (a) at (2.5, 1) {$a$};
 \end{tikzpicture}
   \begin{tikzpicture}[scale=.5]
\draw  (0,0) circle(1);
\node [left] (c) at (-1,0) {$c$};
 \draw (45:1) to [out=20, in=180] (2.5, 1);
\node [above] (a) at (2.5, 1) {$a$};
\draw (45:1)..controls (70:3.2) and   (140:3.2) ..(165:1);
\draw [thick, densely dotted] (45:1)..controls (70:3.4) and (142:3.3)..(167:1.1);
\draw [thick, densely dotted] (47:1.1) arc(47:165:1.1);
 \node [above] (b) at (105:2.1) {$b$};
 \end{tikzpicture}
    \begin{tikzpicture}[scale=.5]
\draw  (0,0) circle(1);
\node [left] (c) at (-1,0) {$c$};
 \draw (45:1) to [out=20, in=180] (2.5, 1);
\node [above] (a) at (2.5, 1) {$a$};
\draw (45:1)..controls  (70:3.2) and   (140:3.2) ..(285:1);
\draw [thick, densely dotted] (45:1)..controls (70:3.5) and (145:3.4)..(285:1.1);
\draw [thick, densely dotted] (47:1.1) arc(47:285:1.1);
 \node [above] (b) at (120:1.8) {$b$};
 \end{tikzpicture}
   \begin{tikzpicture}[scale=.5]
\draw  (0,0) circle(1);
\node [left] (c) at (-1.05,0) {$c$};
\draw (45:1)..controls  (2.3,3.1) and   (-.9,3.1) ..(45:1);
\draw [thick, densely dotted] (45:1)..controls (2.5,3.2) and (-1.1,3.2)..(45:1);
 \node [above] (b) at (.7,2.5) {$b$};
 \draw (45:1) to [out=20, in=180] (2.5, 1);
\node [above] (a) at (2.5, 1) {$a$};
\draw [thick, densely dotted] (-.05,-.05) circle(1.07);
 \end{tikzpicture}
 \end{center}
\caption{A 2-parameter family of graphs $X_{2,1}$ }
\label{fig:2parafam}
\end{figure}

On the boundary of the parameter square for this two-parameter family one thus has five slides of one arc over another, as indicated in Figure~\ref{fig:puncturedtorus}. The quotient space of the square obtained by identifying the two $a/b$ edges and the two $b/c$ edges is a surface $S_{1,1}$ of genus one with one boundary circle, where this boundary circle parametrizes the $a/c$ slide.  The $a/c$ slide was the restriction of the map $f\colon T^{2k}\to Q_{2,2k+1}$ representing the stabilization of $\alpha_k$ to one of the circle factors of $T^{2k}$, so we can extend $f$ to a map $T^{2k-1}\times S_{1,1}\to Q_{2,2k+1}$.  This implies that $f$ induces the zero map $H_{2k}(T^{2k})\to H_{2k}(Q_{2,2k+1})$.
\end{proof}

\begin{figure}
\begin{center}
\begin{tikzpicture}[scale=1.25]
 \draw [thick] (0,0) -- (2,0) -- (2,2) -- (0,2) -- (0,0);
 \draw [thick] (1,2) -- (2,0);
\node[right] (rbc) at (2,1) {$b/c$};
\node[left] (lbc) at (0,1) {$b/c$};
\node[below] (bab) at (1,0) {$a/b$};
\node[above] (aab) at (.5,2) {$a/b$};
\node[above] (aac) at (1.5,2) {$a/c$};
\end{tikzpicture}
\end{center}
\caption{The parameter space, a punctured torus }
\label{fig:puncturedtorus}
\end{figure}
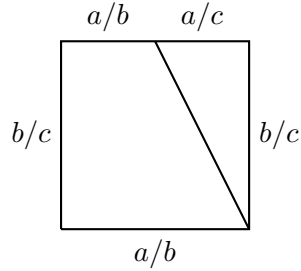

\subsection{Representing Eisenstein classes geometrically.}
\label{subsec:geoEisenstein}

Let us describe how the Eisenstein classes ${\mathcal E}_k\in H_{4k+3}\big(\Aut(F_{2k+3})\big)$ can be realized as families of graphs parametrized by certain manifolds $E_k$ that are analogous to the tori representing Morita classes, but a little more complicated.  Consider first the case $k=1$, so $E_1$ will be a closed orientable $7$-manifold.  This is the product of $T^4$ with a $3$-manifold $N$, where $N$ splits along a $2$-torus into submanifolds $N_1$ and $N_2$ each homeomorphic to a product of $S^1$ with a compact surface $S_{1,1}$ of genus $1$ with one boundary component. We obtain $N$ from $N_1\coprod N_2$ by gluing the two boundary tori via a homeomorphism of $\bdry N_1=\bdry N_2 = T^2$ switching the two circle factors of $T^2$. From this description one can see that $\pi_1 E_1$ is the product of $\Z^4$ with two copies of $\Z\times F_2$ amalgamated along $\Z\times \Z$ where the second $\Z$ is  
 generated by the commutator of the generators of $F_2$ and the amalgamation interchanges the two factors of $\Z\times\Z$.  Also from the construction of $E_1$  one can easily see that it is a $K(\pi,1)$.

Now we describe a map $E_1\to Q_{5,1}$ corresponding to a $7$-dimensional family of rank $5$ graphs with one leaf. To construct these graphs, start with the family of graphs parametrized by $S_{1,1}$ indicated in Figure~\ref{fig:2parafam}, consisting of a circle $c$ with edges $a$ and $b$ attached.  One end of $b$ is attached to the basepoint of $c$ and the other end to a point moving around $c$. The arc $a$ attaches at one end to a point that moves across $b$ and then returns to the basepoint along an arc of $c$.  Next we attach one end of another arc $a'$ at a point that moves only around $c$, independently of how $a$ and $b$ attach.  This gives a family of graphs parametrized by $S^1 \times S_{1,1}$.  Reversing the roles of $a$ and $a'$ gives another family parametrized by $S^1\times S_{1,1}$.  On $S^1\times\bdry S_{1,1}$ both families consist of graphs in which $b\cup c$ is $S^1\vee S^1$ with $a$ and $a'$ attached to arbitrary points of $c$.  The two families parametrized by $S^1\times S_{1,1}$ then fit together to form a family parametrized by $N$, so we have a map $N\to Q_{2,2}$.   Attaching two more arcs $d$ and $e$ at an endpoint of each that moves freely around $c$ gives a family parametrized by $N\times T^2$ and so a map $N\times T^2 \to Q_{2,4}$, hence a class $m\in H_5(\Gamma_{2,4})$. Finally, we assemble this class $m$ with the torus $T^2$ representing $\alpha_1\in H_2(\Gamma_{1,3})$ by adjoining another circle $c'$ and attaching the free ends of $a$, $d$, and $e$ at points that move around $c'$, where by rotating $c'$ we can assume that $a$ attaches just at the basepoint of $c'$.  The arc $a'$ has one end unattached, so it is a leaf. Figure~\ref{Allen} shows a graph in the resulting $7$-parameter family in the case that $a'$ attaches to $c$, but $a'$ could also attach to $b$ when $a$ attaches to $c$.  These two possibilities correspond to the two submanifolds $N_1$ and $N_2$ of $N$.

   \begin{figure}
\begin{center}
\begin{tikzpicture}[scale=.8]
\draw  (0,0) circle(1);
 \draw (.78,-.605) arc (-120:120:0.7);
\draw (0:-1) to (0:-1.75);
\draw (0:1.84) to (0:3);
\draw (60:1)  to  [out=20, in=160] (3.27,.7);
\draw (-60:1)  to  [out=-20, in=-160] (3.27,-.7);
\node (e) at (2,1.3) {$e$};
\node (a) at (2.4,0.2) {$a$};
\node (b) at (1.3,-0.35) {$b$};
\node (d) at (2,-1.3) {$d$};
\node (ap) at (-1.4,0.3) {$a'$};
\node (c) at (-.5,-1.2) {$c$};
\begin{scope}  [xshift=4cm]
\draw (0,0) circle (1);
\node (cp) at (.5,-1.1) {$c'$};
\end{scope}
\end{tikzpicture}
\end{center}
\caption{A graph in the $\mathcal{E}_1$ family}\label{Allen}
\label{gluing}
\end{figure}
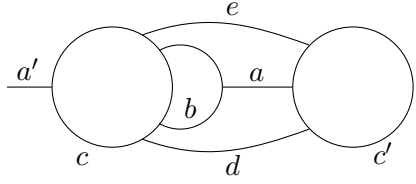
To relate this to the earlier construction of Eisenstein classes, note first that the group $M_{I,J}$ in Proposition~\ref{prop:oddtwo} is the semidirect product $\Z\ltimes(F_2^2\times \Z^2)$ in the case at hand when $|I|=2=|J|$.  A $K(\pi,1)$ for this group is the mapping torus $T_f$ of a map $f$ from $(S^1\vee S^1)^2 \times T^2$ to itself that is the identity on the $T^2$ factor, and on each $S^1\vee S^1$ induces the automorphism of $\pi_1(S^1\vee S^1)=F_2$ fixing the first basis element $x$ and sending the second basis element $y$ to $xy$.  We can compute $H_5(T_f)$ geometrically as the elements of $H_4\big((S^1\vee S^1)^2\times T^2\big)$ fixed by $f_*$ using the exact sequence in Example~2.48 of~\cite{Hatcher}.  This $H_4$ has dimension $4$ with basis corresponding to the $4$-cells of $(S^1\vee S^1)^2\times T^2$ by the K\"unneth formula. The basis elements can be written (omitting tensor product symbols for simplicity) as $xxzz$, $xyzz$, $yxzz$, and $yyzz$. (It would make sense to simplify the notation further by replacing  $z$ by $x$ throughout.) The map $f_*$ fixes $x$ and $z$ and takes $y$ to $x+y$.  A short calculation shows that the elements invariant under $f_*$ form a $2$-dimensional subspace with basis $xxzz$ and $xyzz-yxzz$.  The element $xxzz$ is not of interest to us since it can be shown to give the trivial element of $H_5(\Gamma_{2,4})$.  For the element $xyzz-yxzz$ the four letters of each of these two words correspond to the edges $a,a',d,e$ in that order.  For example the four letters of $xyzz$ correspond to $a$ moving around $c$, $a'$ moving across $b$, and $d$ and $e$ moving around $c$.  Interchanging $a$ and $a'$ gives the other word $yxzz$. To go from $H_4\big((S^1\vee S^1)^2\times T^2\big)$ to $H_5(T_f)$ involves a fifth parameter, and this corresponds to one end of the edge $b$ moving around $c$.  The whole mapping torus $T_f$ corresponds to a $5$-dimensional family of graphs in which $a$ and $a'$ move freely around both $b$ and $c$, while $d$, $e$, and one end of $b$ move around $c$.  The reason for restricting to the graphs parametrized by $N\times T^2$ is to get a manifold as parameter space and thus reduce the dimension of $H_5$ from two to one.

To generalize from $\mathcal{E}_1$ to $\mathcal{E}_k$ is easy since all one has to do is replace the arcs $d$ and $e$ by $2k$ arcs $d_1,\ldots,d_k$ and $e_1,\ldots,e_k$ that behave in exactly the same way as $d$ and $e$.  Thus $E_k=N \times T^{4k}$.  Incidentally, the manifold $N$ belongs to the class of 3-manifolds known as graph manifolds, which seems an especially appropriate name in the present context.

One can see that $\mathcal{E}_k\in H_{4k+3}(\Gamma_{2k+3,1})$ maps to zero in $H_{4k+3}(\Gamma_{2k+3,0})$ since ignoring the leaf $a'$ replaces the $3$-manifold $N$ by a $2$-dimensional quotient, namely $S_{1,1}$, so the composition $E_k\to Q_{2k+3,1} \to Q_{2k+3,0}$ factors through $S_{1,1}\times T^{4k}$ which has one lower dimension than $E_k$.

\section{Homological triviality of a standard ``maximal torus'' for $\Autn$}
\label{subsec:maximal}

Let $A$ be the subgroup of $\Autn$ generated by the automorphisms $\lambda_i$ and $\rho_i$ for $1\leq i\leq n-1$, where $\lambda_i$ sends the basis element $x_i$ to $x_nx_i$ and fixes $x_j$ for $j\neq i$, and $\rho_i$ is defined similarly but multiplies $x_i$ by $x_n$ on the right. Thus $A$ is isomorphic to $\Z^{2n-2}$, realizing the maximal rank of an abelian subgroup since the $\vcd$ of $\Autn$ is $2n-2$.

\begin{theorem}
\label{thm:maxabelian}
The inclusion $A\incl\Autn$ of the standard free abelian subgroup of maximal rank induces the trivial map on rational homology in all positive dimensions.
\end{theorem}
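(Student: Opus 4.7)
The plan is to realize $A$ as the image of a ``sliding'' subgroup of $\Gamma_{1,2n-1}$ under a multi-stage assembly map, and then invoke the vanishing of self-gluings already established in Section~\ref{subsec:selfgluing}.

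First I would set up the factorization. Consider the graph $\Gr_{1,2n-1}$ consisting of a single loop $c$ with $2n-1$ leaves attached, one of which is designated as the basepoint. By Section~\ref{subsec:rankone}, $\Gamma_{1,2n-1}\iso\Z_2\ltimes\Z^{2n-2}$, and the $\Z^{2n-2}$ summand is generated by the operations $w_j$ that each slide the $j$-th non-basepoint leaf once around $c$. Choose a gluing pattern $\phi$ that pairs the $2n-2$ non-basepoint leaves of $\Gr_{1,2n-1}$ into $n-1$ pairs; the resulting graph has rank $n$ with a single remaining leaf, giving an assembly map $p_\phi\colon\Gamma_{1,2n-1}\to\Gamma_{n,1}=\Autn$. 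Identifying $c$ with the basis element $x_n$ and each newly-formed loop with the corresponding $x_i$ ($i<n$), the graph-theoretic interpretation of $\lambda_i$ (respectively $\rho_i$) as sliding one endpoint of the $i$-th petal around $x_n$ shows that each $w_j$ is carried to exactly one of $\lambda_i$ or $\rho_i$. Hence $p_\phi$ restricts to an isomorphism $\Z^{2n-2}\xrightarrow{\cong}A$, and the inclusion $A\incl\Autn$ factors as $\Z^{2n-2}\subset\Gamma_{1,2n-1}\xrightarrow{p_\phi}\Autn$.

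Next I would factor $p_\phi$ by performing the $n-1$ pair-gluings one at a time. By associativity of assembly maps (noted in Section~\ref{sec:gluing}), the induced map on rational homology factors through intermediate stages as
$$
H_k(\Gamma_{1,2n-1})\longrightarrow H_k(\Gamma_{2,2n-3})\longrightarrow H_k(\Gamma_{3,2n-5})\longrightarrow\cdots\longrightarrow H_k(\Autn),
$$
where the very first arrow is precisely the rank-one self-gluing assembly map studied in Section~\ref{subsec:selfgluing}. That section shows, via a short representation-theoretic comparison of the $\SG_{s-2}$-module structures given by Proposition~\ref{prop:rankone} and Theorem~\ref{thm:ranktwo}, that $H_k(\Gamma_{1,s})\to H_k(\Gamma_{2,s-2})$ is identically zero for every $k>0$. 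Specializing to $s=2n-1$ makes the first arrow above zero whenever $k>0$, so the entire composition vanishes.

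Combining the two steps, for $k>0$ the map $H_k(A;\F)\to H_k(\Autn;\F)$ factors through the zero map $H_k(\Gamma_{1,2n-1};\F)\to H_k(\Autn;\F)$, completing the proof. The one point that deserves genuine care is the identification $w_j\mapsto\lambda_i$ or $\rho_i$ in the first step; this is a routine but careful check from the graph-theoretic definitions, and it is really the geometric heart of the argument, since everything else follows automatically from results already in the paper.
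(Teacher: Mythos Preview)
Your proof is correct and matches the paper's third proof (labeled ``Proof via representation theory'') almost exactly: the paper also factors $A \subset \Gamma_{1,2n-1} \to \Gamma_{2,2n-3} \to \Autn$ and invokes Section~\ref{subsec:selfgluing} to kill the middle map. The paper additionally offers two independent proofs---an algebraic one computing $\SG^\pm_{n-1}$-invariants in the exterior algebra $H^*(A)$, and a geometric one factoring the map $T^{2n-2}\to Q_{n,1}$ through symmetric products of circles---but your argument coincides with the one the paper singles out as having the advantage of handling degree~$1$ without a separate appeal to $H_1\big(\Autn\big)=0$.
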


Note that the $\Z^{2n-4}\subset \Aut(F_n)$ realizing the Morita class $\mu_k$ is not contained in this $\Z^{2n-2}\subset \Aut(F_n)$ or any subgroup conjugate to this by permuting basis elements for $F_n$, and the theorem gives a good reason why this must be the case.

We will give three different proofs of this theorem, each with its own advantages. The first proof is probably the most elementary.

\begin{proof}[Algebraic proof]
We can enlarge $A$ to a subgroup $G\subset \Autn$ by adjoining the automorphisms that permute the basis elements $x_1,\ldots,x_{n-1}$ and send a subset of them to their inverses.  These automorphisms form a copy of the signed permutation group $\SG^\pm_{n-1}$ in $\Autn$, and $G$ is the semidirect product $\SG^\pm_{n-1} \ltimes A$.  It will suffice to show $H_i(G)=0$ for $i>1$ since $H_1\big(\Autn \big)=0$ from the classical presentations of $\Autn$.

Passing from homology to cohomology and applying the usual argument with transfer homomorphisms, we can compute $H^*(G)$ as the invariants of $H^*(A)$ under the action of $\SG^\pm_{n-1}$ induced by conjugation.  The cohomology ring $H^*(A)$ is an exterior algebra on generators $a_i$ and $b_i$ corresponding to $\lambda_i$ and $\rho_i$.  Conjugation by the map inverting $x_i$ sends $\lambda_i$ to $\rho_i^{-1}$ and $\rho_i$ to $\lambda_i^{-1}$, so in $H^*(A)$ this sends $a_i$ to $-b_i$ and $b_i$ to $-a_i$.  Conjugation by a permutation of the $x_i$'s has the effect of permuting the subscripts on the $a_i$'s and $b_i$'s.

Elements of $H^k(A)$
are linear combinations of degree $k$ monomials in the $a_i$'s and $b_i$'s.
We will show that for any monomial $m$ of degree $k > 1$ there exists $\sigma \in \SG^\pm_{n-1}$ such that $\sigma m =-m$.  This implies that $m$ cannot appear in any element of $H^k(A)$ that is invariant under the action of $\SG^\pm_{n-1}$, and hence $H^k(G)=0$ for $k>1$.  There are three cases:
if $m$ contains both $a_i$ and $b_i$ for some $i$, then
inverting $x_i$ changes the sign of $a_i \wedge b_i$ and thus the sign of $m$; if $m$ contains $a_i$ and $a_j$ but not either of $b_i$ or $b_j$ we use the involution in $G$ interchanging $x_i$ and $x_j$; and if $m$ contains $a_i$ and $b_j$ but not either of $b_i$ or $a_j$ we use the involution in $G$ interchanging $x_i$ and $x_j^{-1}$.
(This argument does not apply when $k=1$, but it is easy to check that $H^1(G)=\F$ generated by $\sum_i a_i - \sum_i b_i$.)
\end{proof}

\begin{proof}[Geometric proof]
As in the previous section we consider the rational model $Q_{n,1}$ for $\Autn$.
The inclusion $\Z^{2n-2}\incl\Autn$ corresponds to a map $f\colon T^{2n-2}\to Q_{n,1}$ of the $(2n-2)$-torus to $Q_{n,1}$.  This specifies a family of graphs parametrized by $T^{2n-2}$ constructed as follows.  Start with a basepointed circle $c$, then attach $n-1$ arcs $a_i$ by identifying their endpoints with points $s_i$ and $t_i$ in $c$.  The $s_i$ and $t_i$ are the coordinates on $T^{2n-2}$, and we can write $f$ as a function $f(s_1,t_1,\ldots,s_{n-1},t_{n-1})$.

The map $f$ is not injective since there are some symmetries present.  One can interchange $s_i$ and $t_i$, switching the ends of $a_i$, without changing the graph, and one can permute the arcs $a_i$.  Switching $s_i$ and $t_i$ gives a quotient of the $i$\hskip1pt th $2$-torus factor of $T^{2n-2}=(T^2)^{n-1}$.  The quotient of a $2$-torus by interchanging the two circle factors is a triangle with two edges identified.  This deformation retracts to a single circle, say the $s_i$ circle.  The quotient of $T^{2n-2}$ by these coordinate transpositions thus has the homotopy type of $T^{n-1}$.  This already implies that the inclusion $A\hookrightarrow \Autn$ induces the trivial map on $H_k$ for $k> n-1$.

Now we can factor out the permutations of the $n-1$ factors of this $T^{n-1}$, producing the $(n-1)$-fold symmetric product of $S^1$.  This is well-known to have the homotopy type of a single circle.  (See for example the end of Example~4K.4 in~\cite{Hatcher}.) Thus the map $f$ factors through a space homotopy equivalent to $S^1$ so it induces the trivial map on $H_i$ for $i>1$.  It also induces the trivial map on $H_1$ since $H_1\big(\Autn\big)=0$ as noted in the first proof.
\end{proof}

\begin{proof}[Proof via representation theory]
We have inclusions $A \subset \Gamma_{1,2n-1} \subset \Aut(F_n)$ where the second inclusion corresponds to the self-gluing  $\Gr_{1,2n-1}\to\Gr_{n,1}$.  (In fact $\Gamma_{1,2n-1}$ is contained in the subgroup $G$ used in the first proof above).
The gluing $\Gr_{1,2n-1}\to\Gr_{n,1}$ factors as the composition of two gluings $\Gr_{1,2n-1}\to\Gr_{2,2n-3}\to\Gr_{n,1}$ as shown in Figure~\ref{fig:X_1s_to Xn1}, so the map $H_*(A) \to H_*\big(\Aut(F_n)\big)$ factors as
$H_*(A)\to H_*(\Gamma_{1,2n-1}) \to H_*(\Gamma_{2,2n-3})\to H_*(\Gamma_{n,1})$.
The middle of these three maps is an assembly map which we showed is trivial in Section~\ref{subsec:selfgluing} (unless the degree is $0$), which implies that $H_k(A) \to H_k\big(\Aut(F_n)\big)$ is trivial if $k > 0$.
\end{proof}

One of the advantages of this last proof is that it also works in degree $1$, so one does not need a separate argument for this case.

\begin{remark}
The composition $A\to \Autn \to \Outn$ has kernel $\Z$  and is injective when restricted to a suitable subgroup $\Z^{2n-3}$ realizing the $\vcd$ of $\Outn$.  The theorem implies that this inclusion $\Z^{2n-3}\hookrightarrow \Outn$ is also trivial on homology since it factors through $A\hookrightarrow\Autn$.
\end{remark}

\begin{figure}
\begin{center}
\begin{tikzpicture}[scale=.6]
\begin{tikzrankone}{4cm}{0cm}
\tikzhair{m1}{110}
\tikzhair{m2}{150}
\tikzhair{m3}{-150}
\tikzhair{m4}{-110}
\tikzhair{m5}{30}
\tikzhair{m6}{0}
\tikzhair{m7}{-30}
\end{tikzrankone}
\path (m2) to node[sloped]{$\dots$}  (m3);
\draw [dotted,->-=0.5] (m5) to[out=30, in=-30, looseness=4] node[right]{$\phi$}  (m7);
\begin{tikzrankone}{12cm}{0cm}
\tikzhair{r1}{110}
\tikzhair{r2}{150}
\tikzhair{r3}{-150}
\tikzhair{r4}{-110}
\tikzhair{r5}{30}
\tikzhair{r6}{0}
\tikzhair{r7}{-30}
\end{tikzrankone}
\path (r2) to node[sloped]{$\dots$}  (r3);
\draw 
(r5) to[out=30, in=-30, looseness=4] node[right]{}  (r7);
\draw [dotted,->-=0.5] (r1) to[out=110, in=150, looseness=4] node[left]{$\phi$}  (r2);
\draw [dotted,->-=0.5] (r4) to[out=-110, in=-150, looseness=4] node[left]{$\phi$}  (r3);
\end{tikzpicture}
\end{center}
\caption{The gluings $\Gr_{1,2n-1}\to\Gr_{2,2n-3}\to\Gr_{n,1}$ }
\label{fig:X_1s_to Xn1}
\end{figure}

\section{Connections with hairy graph homology and the Lie algebra of symplectic derivations}
\label{sec:HairyLie}

\subsection{Hairy graph homology}
In this section we note the connection between our calculations and the hairy graph homology theory of~\cite{CKV1}.  As above $\SG_k$  denotes the symmetric group on $k$ letters and $\Sym^k$  the $k$-th symmetric power functor on vector spaces. The following lemma is an immediate consequence of Proposition~\ref{prop:vcd-nonvanish}.
\begin{lemma}
\label{lem:VF}
Let $\h=H^1(F_n)\iso \F^n$. For any $\F$-vector space $V$
$$
H^{2n-3+s}(\Gamma_{n,s})\otimes_{\SG_s} \wV{V}{s}\iso H^{2n-3}\big(\Out(F_n);\Sym^s(\h\otimes V)\big).
$$
\end{lemma}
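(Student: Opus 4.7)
The plan is to reduce the statement to Proposition~\ref{prop:vcd-nonvanish} by pushing the operation $-\otimes_{\SG_s} \wV{V}{s}$ inside the cohomology and then identifying the resulting coefficient module. Throughout, the action of $\Out(F_n)$ on $V$ is trivial, and $\wV{\h}{s}$ carries commuting actions of $\Out(F_n)$ (via $\GL_n(\Z)$, on the $\h$-factors) and $\SG_s$ (permuting tensor factors with a sign).

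First, I would invoke Proposition~\ref{prop:vcd-nonvanish} to rewrite the left-hand side as
$$H^{2n-3}\bigl(\Out(F_n);\wV{\h}{s}\bigr)\otimes_{\SG_s}\wV{V}{s}.$$
Since $\wV{V}{s}$ is a finite-dimensional $\F$-vector space (hence flat) with trivial $\Out(F_n)$-action, the universal coefficient theorem gives a natural $\SG_s$-equivariant isomorphism
$$H^{2n-3}\bigl(\Out(F_n);\wV{\h}{s}\bigr)\otimes_\F\wV{V}{s}\;\iso\;H^{2n-3}\bigl(\Out(F_n);\wV{\h}{s}\otimes_\F\wV{V}{s}\bigr),$$
where $\SG_s$ acts diagonally on both sides. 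Because we work over $\F$ of characteristic zero, the coinvariants functor $(-)_{\SG_s}$ is exact, so it commutes with taking group cohomology; applying it to both sides yields
$$H^{2n-3}\bigl(\Out(F_n);\wV{\h}{s}\bigr)\otimes_{\SG_s}\wV{V}{s}\;\iso\;H^{2n-3}\bigl(\Out(F_n);\wV{\h}{s}\otimes_{\SG_s}\wV{V}{s}\bigr).$$

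It remains to identify the coefficient module on the right. By the definition recalled in the Notation before Lemma~\ref{lem:decomp}, $\wV{\h}{s}=\tV{\h}{s}\otimes\alt$ and $\wV{V}{s}=\tV{V}{s}\otimes\alt$, so the two copies of $\alt$ cancel in the diagonal action and
$$\wV{\h}{s}\otimes_\F\wV{V}{s}\;\iso\;\tV{\h}{s}\otimes_\F\tV{V}{s}\;\iso\;\tV{(\h\otimes V)}{s}$$
as $\SG_s$-modules with the permutation action on the right. Taking $\SG_s$-coinvariants of the permutation action on the $s$-th tensor power yields the $s$-th symmetric power, so
$$\wV{\h}{s}\otimes_{\SG_s}\wV{V}{s}\;\iso\;\Sym^s(\h\otimes V),$$
and this identification is $\Out(F_n)$-equivariant because $\Out(F_n)$ acts trivially on $V$ and the isomorphism respects the $\h$-factors. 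Substituting this back completes the proof.

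The one subtlety worth double-checking is the sign cancellation in the second paragraph: because $V$ appears only through the tensor factor being symmetrized, it is essential that the $\alt$-twist on $\wV{V}{s}$ exactly compensates the one on $\wV{\h}{s}$, converting alternating tensors to the ordinary permutation action whose coinvariants give $\Sym^s$ rather than $\wedge^s$. Beyond this, the argument is formal.
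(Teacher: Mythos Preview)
Your proof is correct and follows essentially the same route as the paper's own argument: invoke Proposition~\ref{prop:vcd-nonvanish}, pull the $\SG_s$-coinvariants inside the cohomology (justified in characteristic zero), and then use the cancellation $\alt\otimes\alt=\text{triv}$ to identify $\wV{\h}{s}\otimes_{\SG_s}\wV{V}{s}$ with $\Sym^s(\h\otimes V)$. The paper records exactly this chain of isomorphisms with less commentary; your added justifications (universal coefficients, exactness of $(-)_{\SG_s}$ over $\F$) are accurate and make the steps explicit.
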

\begin{proof}
The K\"unneth isomorphism $H^s(F_n^{\,s})\iso \wV{H^1(F_n)}{s}=\wV{\h}{s}$ is $\Out(F_n)\times \SG_s$-equivariant. So
\begin{align*}
H^{2n-3+s}(\Gamma_{n,s})\otimes_{\SG_s} \wV{V}{s}&
    \iso H^{2n-3}\left(\Out(F_n);H^s(F_n^{\,s})\right)\otimes_{\SG_s} \wV{V}{s}\,\,\text{(Prop.~\ref{prop:vcd-nonvanish})}\\
&\iso H^{2n-3}\left(\Out(F_n);\wV{\h}{s}\right)\otimes_{\SG_s} \wV{V}{s}\\
&\iso H^{2n-3}\left(\Out(F_n);\wV{\h}{s} \otimes_{\SG_s} \wV{V}{s} \right) \\
&\iso H^{2n-3}\left(\Out(F_n);\tV{\h}{s} \otimes_{\SG_s} \tV{V}{s} \right) \\
&\iso H^{2n-3}\left(\Out(F_n);\Sym^s(\h\otimes V)\right).
\end{align*}
\end{proof}

Let $\hairy_V^{n,s}$ denote the hairy Lie graph complex (see~\cite{CKV1, CKV2}), where graphs have rank $n$ and $s$ hairs labeled by vectors from $V$.  In~\cite{CKV1}
the following theorem, with the twist accidentally omitted, was proved by a direct analysis of the chain complex.

\begin{theorem}[\cite{CKV1}, Theorem 11.1]
\label{thm:oldthm}
There is an isomorphism
$$
H_k(\hairy_V^{n,s}) \iso  H^{2n+s-2-k}(\Gamma_{n,s})\otimes_{\SG_s} \wV{V}{s}.
$$
\end{theorem}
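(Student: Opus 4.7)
My approach is to reduce both sides of the claimed isomorphism to the cohomology $H^*(\Out(F_n);\Sym^\bullet(\h\otimes V))$, and then match them.

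For the right-hand side, I would tensor the Leray--Serre spectral sequence~\eqref{eq:LS} over $\SG_s$ with the twisted module $\wV{V}{s}$; since $\otimes_{\SG_s}$ is exact in characteristic zero, this produces a genuine spectral sequence converging to $H^{p+q}(\Gamma_{n,s})\otimes_{\SG_s}\wV{V}{s}$. Lemma~\ref{lem:Fn} combined with Frobenius reciprocity and the basic identity $\wV{\h}{q}\otimes_{\SG_q}\wV{V}{q}\iso\Sym^q(\h\otimes V)$ (the two sign characters cancel since $\wV{\h}{q}=\tH{q}\otimes\alt$ and $\wV{V}{q}=\tV{V}{q}\otimes\alt$) gives
$$
H^q(F_n^s)\otimes_{\SG_s}\wV{V}{s} \iso \Sym^q(\h\otimes V)\otimes \wedge^{s-q}V,
$$
so the new $E_2$ page reads $H^p(\Out(F_n);\Sym^q(\h\otimes V))\otimes\wedge^{s-q}V$. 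The slice $q=s$ already recovers Lemma~\ref{lem:VF}; the rest of the spectral sequence carries the additional information we need.

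For the left-hand side, I would then invoke Kontsevich's theorem in its hairy-graph form, as developed in~\cite{CKV1,CKV2}. This identifies $\hairy^{n,s}_V$ with a combinatorial model for a shifted cellular cochain complex on the spine of Outer Space for $\Out(F_n)$ with coefficients in $\Sym^s(\h\otimes V)$: cells correspond to forested rank $n$ graphs, $V$-decorations of vertices produce the symmetric-power coefficients, and the hairy contraction differential matches the cellular coboundary (adding an edge to a forest). The overall degree shift is by $\vcd(\Out(F_n))+(s+1)=2n+s-2$, so a hairy chain in homological degree $k$ corresponds to a cohomology class in degree $2n+s-2-k$. Combining this with the spectral sequence computation yields the stated isomorphism once one checks that the auxiliary factors $\wedge^{s-q}V$ organise correctly under the hair-placement bookkeeping.

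The main obstacle is the second step: a careful combinatorial identification of the hairy graph differential, with all orientation and sign conventions intact, with the cellular coboundary on the spine of Outer Space tensored with $\Sym^s(\h\otimes V)$, and simultaneously verifying that the spectral sequence above degenerates sufficiently to yield a bare isomorphism rather than just a filtered one. This is the classical ``graph homology equals Lie cohomology'' correspondence originating with Kontsevich, and forms the technical heart of the result; the bookkeeping has already been carried out in~\cite{CKV1,CKV2}, and I would quote it rather than reprove it here.
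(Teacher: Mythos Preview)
Your plan has a genuine gap in its second step. The paper does not reprove this theorem; it cites \cite{CKV1}, where it was established by a \emph{direct} chain-level identification: hairy rank $n$ graphs with $s$ labeled leaves are precisely the cells in the spine of Outer Space for $\Gamma_{n,s}$, so $\hairy_V^{n,s}$ is identified (up to the degree shift $2n+s-2=\vcd(\Gamma_{n,s})+1$) with the cellular cochain complex of that spine tensored over $\SG_s$ with $\wV{V}{s}$. Note that the natural target is $\Gamma_{n,s}$, not $\Out(F_n)$.

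Your route instead tries to pass through $\Out(F_n)$, and here the two halves of your plan do not meet. On the right-hand side your twisted Leray--Serre computation is correct and yields $E_2^{p,q}=H^p\big(\Out(F_n);\Sym^q(\h\otimes V)\big)\otimes\wedge^{s-q}V$, but this does \emph{not} collapse to the single row $q=s$; all rows $0\le q\le s$ contribute. On the left-hand side you claim $\hairy_V^{n,s}$ models the cochain complex of $\Out(F_n)$ with coefficients $\Sym^s(\h\otimes V)$, but that identification is only valid at the extremal degree $k=1$ (this is exactly Theorem~\ref{thm:oldthm} combined with Lemma~\ref{lem:VF}, i.e.\ the next theorem in the paper). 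A hair labeled by $v\in V$ carries no $\h$-component, so there is no mechanism by which ``$V$-decorations of vertices produce the symmetric-power coefficients'' $\Sym^s(\h\otimes V)$ on the spine for $\Out(F_n)$; the $\h$ only appears after you pass through the fibration $F_n^s\to B\Gamma_{n,s}\to B\Out(F_n)$, which is precisely what the direct approach avoids.

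A spectral-sequence proof is not impossible, but it would require exhibiting a filtration on $\hairy_V^{n,s}$ whose associated graded matches your $E_2$ page (including the $\wedge^{s-q}V$ factors) and then comparing the two spectral sequences. You have not indicated what that filtration would be, and this is substantially more work than the direct identification with the $\Gamma_{n,s}$ model, which is what \cite{CKV1} actually does and what you should simply quote.
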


Combining this with Lemma~\ref{lem:VF} gives a shorter proof of the following theorem from~\cite{CKV2} relating   the first homology of the hairy Lie graph complex with the cohomology of $\Out(F_n)$ with twisted coefficients.

\begin{theorem}[\cite{CKV1}, Theorem 8.8]
For $n\geq 2, s\geq 0$ there is an isomorphism
$$
H_{1}(\hairy_V^{n,s}) \iso H^{2n-3}\big(\Out(F_n); \Sym^s(\h\otimes V)\big).
$$
\end{theorem}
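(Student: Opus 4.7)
The plan is to observe that this theorem is an immediate consequence of two results already available in the excerpt: Theorem~\ref{thm:oldthm} (the $k=1$ case) and Lemma~\ref{lem:VF}. No new ingredients are needed; the argument is purely a matter of chaining two isomorphisms and checking that the indices match.

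First I would specialize Theorem~\ref{thm:oldthm} to the case $k=1$. Setting $k=1$ in the formula $H_k(\hairy_V^{n,s})\iso H^{2n+s-2-k}(\Gamma_{n,s})\otimes_{\SG_s}\wV{V}{s}$ yields
$$H_1(\hairy_V^{n,s})\iso H^{2n+s-3}(\Gamma_{n,s})\otimes_{\SG_s}\wV{V}{s}=H^{(2n-3)+s}(\Gamma_{n,s})\otimes_{\SG_s}\wV{V}{s}.$$
This is precisely the group appearing on the left-hand side of Lemma~\ref{lem:VF}. Applying that lemma then gives
$$H^{2n-3+s}(\Gamma_{n,s})\otimes_{\SG_s}\wV{V}{s}\iso H^{2n-3}\big(\Out(F_n);\Sym^s(\h\otimes V)\big),$$
which completes the proof after composing the two isomorphisms.

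There is essentially no obstacle here beyond keeping track of the arithmetic $2n+s-2-k=2n-3+s$ when $k=1$, and noting that the hypothesis $n\geq 2$ is already present in Lemma~\ref{lem:VF} through the use of Proposition~\ref{prop:vcd-nonvanish} (which requires the short exact sequence~\eqref{eq:ses}, valid for $n\geq 2$). The $s=0$ case is also covered: both sides then reduce to $H^{2n-3}(\Out(F_n);\F)$ (since $\wV{V}{0}=\F$, $\Sym^0(\h\otimes V)=\F$, and $\Gamma_{n,0}=\Outn$), so nothing special needs to be said. Thus the entire proof can be presented in two lines once the references to Theorem~\ref{thm:oldthm} and Lemma~\ref{lem:VF} are made explicit.
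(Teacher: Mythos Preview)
Your proposal is correct and follows exactly the approach the paper takes: the text immediately before the theorem states that combining Theorem~\ref{thm:oldthm} (at $k=1$) with Lemma~\ref{lem:VF} ``gives a shorter proof'' of this result. Your bookkeeping of the indices and the remarks about $n\geq 2$ and $s=0$ are fine.
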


This theorem has been slightly restated here to be more compatible with current notation.

\subsection{The Lie algebra of symplectic derivations}
According to Theorem~\ref{thm:oldthm} hairy graph homology is obtained by twisting the homology of $\Gamma_{n,s}$ with $\wV{V}{s}$.   In~\cite{CKV1, CKV2}, it was shown that if the dimension of $V$ is sufficiently large, the $k$-th homology of the Lie algebra $\lplus_V$ of positive degree symplectic derivations  embeds in hairy Lie graph homology:
$$
H_k(\lplus_V)\subset H_k(\hairy_V)=\bigoplus_{n,s} H_{k}( \hairy^{n,s}_V)
\iso  \bigoplus_{n,s} H^{2n+s-2-k}(\Gamma_{n,s})\otimes_{\SG_s} \wV{V}{s}.
$$
Furthermore,   every irreducible  $\GL(V)$-module $\SF{\lambda}V$ in the decomposition of    hairy graph homology corresponds to an irreducible $\SP(V)$-module $\SF{\langle\lambda\rangle}V$ in the decomposition of  of $H_*(\lplus_V)$.  Thus the cohomology classes found in this paper produce homology classes for $\lplus_V$.  These classes can be used to show that $H_{k}(\lplus)$ contains infinitely many different $\SP$-modules, as we now show.

\begin{theorem} $H_{3n+d-2}(\lplus)$ contains the $\SP$-module $\SF{\langle((2m{+}1)^n,1^d)\rangle}$ for all $m$.
\end{theorem}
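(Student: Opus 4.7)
The plan is to reduce the claim directly to Theorem~\ref{thm:2mn} by unwinding the chain of identifications: $\SP$-homology $\hookrightarrow$ hairy graph homology $\iso$ twisted $\Gamma_{n,s}$-cohomology, and then pick out the $\GL(V)$-piece corresponding to an explicit $\SG_s$-irreducible guaranteed by Theorem~\ref{thm:2mn}.

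First, I would choose the parameters matching the target partition $\lambda = ((2m{+}1)^n, 1^d)$. Setting $s = n(2m{+}1) + d$ and $k = 3n+d-2$, a short computation gives $2n + s - 2 - k = 2mn$, and the conjugate partition is $\lambda' = (n+d,\, n^{2m})$. By Theorem~\ref{thm:oldthm} together with Schur--Weyl duality (applied to $\wV{V}{s} \iso \bigoplus_{|\mu|=s}\SF{\mu}V \otimes P_{\mu'}$), the multiplicity of $\SF{\lambda}V$ in $H_k(\hairy_V^{n,s})$ equals the multiplicity of $P_{\lambda'}$ in $H^{2mn}(\Gamma_{n,s})$.

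The main step is now a direct appeal to Theorem~\ref{thm:2mn}: since $s = n(2m{+}1) + d \ge n(2m{+}1)$, the $\SG_s$-module $P_{(s-2mn,\,n^{2m})} = P_{(n+d,\,n^{2m})} = P_{\lambda'}$ sits in $H^{2mn}(\Gamma_{n,s})$ as a direct summand of multiplicity one. Feeding this back through Schur--Weyl and Theorem~\ref{thm:oldthm} produces $\SF{\lambda}V$ as a summand of $H_{3n+d-2}(\hairy_V^{n,s})$, and hence of the total hairy homology $H_{3n+d-2}(\hairy_V)$.

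Finally, I invoke the correspondence recalled just before the theorem statement: the embedding $H_*(\lplus_V) \hookrightarrow H_*(\hairy_V)$ (valid for $\dim V$ sufficiently large) sends the irreducible $\SP(V)$-module $\SF{\langle\lambda\rangle}V$ to the summand $\SF{\lambda}V$ of the hairy homology. Thus $\SF{\langle((2m{+}1)^n,1^d)\rangle}V$ appears in $H_{3n+d-2}(\lplus_V)$ for $\dim V \gg 0$, which gives the desired $\SP$-module in $H_{3n+d-2}(\lplus)$. The only mild obstacle is bookkeeping (checking the degree shift and that $\lambda'$ really matches the partition produced by Theorem~\ref{thm:2mn}); once the indices line up, no further work is required beyond citing existing results.
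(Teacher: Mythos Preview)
Your proposal is correct and follows essentially the same route as the paper: choose $s$ so that Theorem~\ref{thm:2mn} produces $P_{(n+d,n^{2m})}$ in $H^{2mn}(\Gamma_{n,s})$, pass through Theorem~\ref{thm:oldthm} and Schur--Weyl to obtain $\SF{((2m+1)^n,1^d)}V$ in hairy homology, then invoke the $\GL \leftrightarrow \SP$ correspondence from \cite{CKV1,CKV2}. Your index bookkeeping is in fact cleaner than the paper's---your $s = n(2m{+}1)+d$ is the value that makes $P_{(n+d,n^{2m})}$ a partition of $s$ and gives $2n+s-2-k=2mn$, whereas the paper's stated $s=2n(m{+}1)+d$ appears to be a typo.
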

\begin{proof}
Let  $s=2n(m{+}1)+d$.  By Theorem~\ref{thm:2mn},  $H^{2nm}(\Gamma_{n,s})$ contains the $\SG_{s}$-submodule $P_{(n+d,n^{2m})}$ with multiplicity $1$.  Setting  $k=3n+d-2$ we have
$$
\begin{aligned}
H_{3n+d-2}(\hairy_V^{n,s})&\iso H^{2mn}(\Gamma_{n,s})\otimes_{\SG_s} \wV{V}{s}\\
& \supset P_{(n+d,n^{2m})}\otimes_{\SG_s} \wV{V}{s}\\
\end{aligned}
$$

Since $\wV{V}{s}=\bigoplus_\lambda P_{\lambda^\prime}\otimes \SF{\lambda}(V)$ and $P_\lambda\otimes_{\SG_s} P_\mu= 0$ unless $\lambda=\mu$, in which case it is the trivial module $\F$, we see that for $\dim(V)\geq \max(2m{+}1,n{+}d)$,
 $
H_{3n{+}d{-}2}(\hairy_V^{n,s})
$
contains the $\GL(V)$-submodule $\SF{(n{+}d,n^{2m})^\prime}(V) = \SF{((2m{+}1)^n,1^d)}(V)$.
Therefore for $\dim(V)$ sufficiently large
$H_{3n+d-2}(\lplus_V)$ contains the corresponding $\SP(V)$-submodule $\SF{\langle ((2m{+}1)^n,1^d)\rangle}(V)$.
Taking the limit as $\dim(V)$ goes to infinity, gives that  $H_{k}(\lplus)$ contains $\SF{\langle ((2m{+}1)^n,1^d)\rangle}$ for all $m$.
\end{proof}

\begin{remark}
\label{rem:HairyMorita}
In~\cite{CVMorita} the original Morita classes in the cohomology of $\Outn$ were re-interpreted in terms of hairy graphs and this point of view was then used to construct more classes, called generalized Morita classes.  The fact that the classes described in Sections~\ref{subsec:Morita} and~\ref{subsec:genMorita} represent the same classes stems from the identification
$$
H_{1}(\hairy_V^{1,2k+1})\iso H^{2k}(\Gamma_{1,2k+1})\otimes_{\SG_{2k+1}} \wV{V}{2k+1} = \SF{2k+1}V = \Sym^{2k+1} V
$$
from Theorem~\ref{thm:oldthm}.   A generator of $H_{1}(\hairy_V^{1,2k+1})$ is a linear Lie tree with two ends joined by an edge and $2k+1$ commuting $V$-labeled leaves (``hairs''), whereas a generator of $H^{2k}(\Gamma_{1,2k+1})$ is the same thing, but with unlabeled leaves. Call this generator $\alpha_k^*$, as it is dual to $\alpha_k$. The graphical cocycle of~\cite{CVMorita} is nonzero only if the graph is the union of $\alpha_{k_1}^*,\ldots,\alpha_{k_m}^*$ with the hairs connected up by edges. In that case the cocycle evaluates to $\pm$ the graph obtained by shrinking each $\alpha_{k}$ to a point. Further projecting the graphical cocycle into the subspace spanned by a single graph gives the Morita cocycle $\mu_G$. By construction, this process is dual to the gluing map defined by $G$.
\end{remark}

\begin{remark}
\label{rem:modular}
In~\cite{CKV2} it was explained that hairy graph homology  can be viewed as the Feynman transform of a cyclic operad.  In light of Theorem~\ref{thm:oldthm}  this implies that the
cohomology groups $H^*(\Gamma_{n,s})$ can be combined into a (twisted) modular co-operad~\cite{GK}. Therefore the duals $H_*(\Gamma_{n,s})$ of these groups  form a (twisted) modular operad. The assembly maps defined in Section~\ref{sec:gluing} are the structure maps of this modular operad.
\end{remark}

\section{Open Questions}
\label{sec:conj}

We finish with several questions and conjectures related to results in the paper.  The conjectures have been verified for all  but the most recently discovered nontrivial classes in $H_k(\Gamma_{n,s})$, but there are not enough of these verified cases to provide overwhelming evidence for the conjectures.

In what follows we always exclude trivial assembly maps, those that involve a component graph  $\Gr_{0,2}$ where the gluing involves only one leaf, because the map
$\beta: H_k(\Gamma_{n,s}) \to H_k(\Gamma_{n,s})$ induced by the assembly map
$\alpha: H_k(\Gamma_{n,s}) \otimes H_0(\Gamma_{0,2}) \to H_k(\Gamma_{n,s})$
is equal to the identity.

\subsection{Surjectivity of the assembly maps below the virtual cohomological dimension}
An assembly map
$H_*(\Gamma_{n_1,s_1})\otimes \cdots  \otimes H_*(\Gamma_{n_k,s_k}) \to H_*(\Gamma_{n,s})$ for $k\geq 2$ can never have nontrivial image in the virtual cohomological dimension (\vcd) of $\Gamma_{n,s}$ since in that case the \vcd\ of $\Gamma_{n,s}$ is strictly greater than the sum of the \vcd's of the factors.  An assembly map induced by gluing pairs of leaves of a single graph preserves the \vcd, so such an assembly map might conceivably be nontrivial in that dimension,
though it seems unlikely that such a map can be surjective (with nontrivial image).
By contrast, in other dimensions  we expect that all classes are constructed by assembly  from lower-rank graphs.

\begin{conjecture}
\label{conj:B}
Suppose that  $k \not= 2n+s-3$, the \vcd\ of $\Gamma_{n,s}$.
Then  $H_k(\Gamma_{n,s})$ is generated by the images of the assembly maps $A_\phi$ over all possible gluings which give the graph $\Gr_{n,s}$.
\end{conjecture}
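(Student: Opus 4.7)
The plan is to combine two main tools: the Leray--Serre spectral sequence for $1 \to F_n^s \to \Gamma_{n,s} \to \Outn \to 1$, and the tropical compactification $\overline{Q}_{n,s}$ of Chan--Galatius--Payne. The key observation (already noted in the paper) is that classes in the image of an assembly map vanish in $\overline{Q}_{n,s}$, so the cokernel of the sum of all assembly maps injects into the relative homology of the pair $(\overline{Q}_{n,s},\partial)$. The conjecture thus reduces to showing that this relative homology vanishes below the vcd.

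I would proceed by a double induction on $n$ and on $s$. The base cases $n \leq 2$ can be verified directly by matching the complete $\SG_s$-module calculations of Proposition~\ref{prop:rankone} and Theorem~\ref{thm:ranktwo} against the images of the assembly maps analyzed in Sections~\ref{subsec:GammaTwoFour}--\ref{subsec:GammaTwo2N}; the Pieri rule then shows that every irreducible summand is hit. For the inductive step I would use the spectral sequence with $E_2^{p,q} = H^p(\Outn;H^q(F_n^s))$ and decompose the coefficients via Lemma~\ref{lem:Fn} as $\wV{\h}{q}\circ P_{(s-q)}$. The induced factor $P_{(s-q)}$ corresponds precisely to classes arising from the stabilization assembly map (gluing trivial $\Gr_{0,3}$'s onto leaves), reducing the problem to understanding the ``primitive'' piece $H^p(\Outn;\wV{\h}{q})$.

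For this primitive piece one would filter the rational model $Q_{n,s}$ by ``graph connectivity'': deeper strata consist of graphs with more separating edges or bridges, while the deepest open stratum consists of graphs that admit no nontrivial decomposition. Classes coming from boundary strata automatically factor through assembly maps from smaller $\Gamma_{n',s'}$ with $n'<n$, to which the inductive hypothesis applies. A spectral sequence for this filtration, combined with representation-theoretic constraints of the type used in Theorem~\ref{thm:2mn} (where $\SG_s$-equivariance forces many differentials to vanish), would then be used to show that any class supported on the open stratum alone is forced by dimension count to live in the vcd, and hence is excluded from the hypothesis $k \neq 2n+s-3$.

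The main obstacle is the gap between the Chan--Galatius--Payne vanishing range ($k<s-3$) and the full range below vcd ($k<2n+s-3$): for $n\gg s$ there is a wide window where no existing vanishing theorem applies. Bridging this gap appears to require genuinely new input about $H^*(\Outn)$ itself, so in practice the conjecture is essentially as hard as identifying the ``non-tautological'' part of this cohomology. A possible auxiliary technique is to exploit the modular operad structure of Remark~\ref{rem:modular}: the assembly maps are exactly the operadic composition maps, so the cokernel we wish to kill is the space of operadic \emph{indecomposables}. One could hope to show by a Koszul-type argument, modelled on the bar resolution of the Lie (or commutative) operad, that these indecomposables are concentrated in the top degree, but making this rigorous would require a much better understanding of the modular operad $\{H_*(\Gamma_{n,s})\}$ than is currently available.
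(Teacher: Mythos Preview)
The statement you are attempting to prove is listed in the paper as a \emph{conjecture}, not a theorem. The paper does not provide a proof; it only verifies the statement for $n\le 2$ (via the explicit $\SG_s$-module calculations of Proposition~\ref{prop:rankone} and Theorem~\ref{thm:ranktwo} together with the constructions in Section~\ref{sec:gluing}) and offers informal evidence in higher rank. There is therefore no ``paper's own proof'' to compare against, and your proposal should be read as a research outline rather than a proof. To your credit, you say as much yourself: you correctly locate the obstruction, namely that the Chan--Galatius--Payne vanishing covers only $k<s-3$ while the conjecture requires $k<2n+s-3$, and that closing this gap would require genuinely new information about $H^*\big(\Outn\big)$.

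Two technical points nonetheless deserve comment. First, the assertion that the cokernel of the assembly maps \emph{injects} into the relative homology of $(\overline{Q}_{n,s},\partial)$ is not justified: the paper only observes that assembled classes die in the compactification, which gives a map from the cokernel, but injectivity of that map is essentially the converse direction and is itself nontrivial. Second, the step where you assert that classes supported on the open stratum of your connectivity filtration are ``forced by dimension count to live in the vcd'' is precisely the content of the conjecture; no such dimension count exists, and the Euler characteristic data for $\Outn$ in fact suggest many classes well below the vcd whose provenance is unknown. The operadic-indecomposables reformulation via Remark~\ref{rem:modular} is a sensible repackaging, but as you acknowledge, it does not reduce the difficulty.
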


The examples in Section~\ref{sec:gluing} confirm this conjecture for $n=1$ and $n=2$.
It can be seen from the description of the cohomology groups that $H_k(\Gamma_{n,s+1})$
is generated by the images of  the assembly maps $H_k(\Gamma_{n,s})  \otimes H_0(\Gamma_{0,3})  \to H_k(\Gamma_{n,s+1})$
in all cases except when $k=2n+s-3$, or when $n=2$ and $k=s$ is divisible by $4$.
The first exception is excluded from the conjecture since these classes are in the \vcd\ and the second one is covered by Section~\ref{subsec:GammaTwo2N}.

The  first three Morita classes and the first two Eisenstein classes   are obtained as images of
 assembly maps. By Bartholdi's calculations there are nontrivial classes in $H_{11}(\Out(F_7))$ and $H_8(\Out(F_7))$.  
The first of these is in the $\vcd$, and there is a natural candidate for the second one, obtained by assembling four copies of $\alpha_1\in H_2(\Gamma_{1,3})$ in a tetrahedral pattern.  By Theorem~\ref{thm:injection} the class in $H_{11}(\Out(F_7))$ also produces a class in $H_{11}(\Aut(F_7)),$  which cannot be the Eisenstein class by Remark~\ref{rmk:eisenstein}. It is possible that this class can be obtained by assembling  classes in $H_{11}(\Gamma_{6,2})$ and $H_0(\Gamma_{0,3})$.

As mentioned in Remark~\ref{rem:modular}, the homology groups $H_*(\Gamma_{n,s})$ for $n \geq 1$ form  a twisted modular operad. Conjecture~\ref{conj:B} together with computations in Section~\ref{sec:gluing} imply that this modular operad is generated by $H_0(\Gamma_{0,3})$ and $H_{2n+s-3}(\Gamma_{n,s})$.

\subsection{Injectivity of classes constructed from modular forms}
In Section~\ref{subsec:classes} we constructed maps from spaces of modular forms to $H_{4m+2}\big(\Out(F_{2m+3})\big)$  by gluing two rank $2$ graphs together along all of their leaves. These maps take the form
$$
\chi:\bigoplus_{0\leq i <m} \left({\textstyle\ext}^2 \mathcal{X}_{2m,i}\right)\longrightarrow  H_{4m+2}\big(\Out(F_{2m+3})\big)
$$
The first of these maps which could be nontrivial has target $H_{42}\big(\Out(F_{23})\big)$. We do not know how to show that the image is nontrivial and it is beyond the reach of computer calculations.  Nevertheless,  we make the following conjecture:

\begin{conjecture}
\label{conj:C}
The restriction of $\chi$  to the term  ${\textstyle\ext}^2 \mathcal{X}_{2m,0}$ is injective.
\end{conjecture}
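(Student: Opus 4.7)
My plan is to establish injectivity by constructing a family of dual cohomology classes in $H^{4m+2}\big(\Out(F_{2m+3})\big)$ that detect the images of $\ext^2\mathcal S_{2m+2}$. The gluing map $p_\phi\colon\Gamma_{2,2m}\times\Gamma_{2,2m}\to\Out(F_{2m+3})$ induces via the K\"unneth formula a pullback $p_\phi^*\colon H^{4m+2}\big(\Out(F_{2m+3})\big)\to\bigoplus_{i+j=4m+2}H^i(\Gamma_{2,2m})\otimes H^j(\Gamma_{2,2m})$, and for $\omega\in H^{4m+2}\big(\Out(F_{2m+3})\big)$ the pairing $\langle\omega,\chi(f\wedge g)\rangle$ can be read off from the component of $p_\phi^*\omega$ in the $(\mathcal S_{2m+2}\otimes P_{(1^{2m})})^{\otimes 2}$ summand of $H^{2m+1}(\Gamma_{2,2m})^{\otimes 2}$. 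By Lemma~\ref{thm:GL} this summand is a direct factor of $H^{2m+1}(\Gamma_{2,2m})$ and, working over the characteristic zero field $\F$, the universal coefficient theorem supplies a canonical perfect pairing with the corresponding summand of $H_{2m+1}(\Gamma_{2,2m})$; this identifies dual bases $\{\gamma^f\}$ and $\{\gamma_f\}$ indexed by any chosen basis of $\mathcal S_{2m+2}$.

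The first step is to describe the classes $\gamma_f$ and $\gamma^f$ explicitly in terms of the Leray-Serre spectral sequence \eqref{eq:LS} for $\Gamma_{2,2m}$, using the Eichler-Shimura realization of cusp forms as classes in $H^1\big(\GL_2(\Z);\Sym^{2m}\h\big)$ reviewed in Section~\ref{subsec:modular}. The second and crucial step is to construct, for each ordered pair $(f,g)$ of cusp forms, a class $\omega_{f,g}\in H^{4m+2}\big(\Out(F_{2m+3})\big)$ whose pullback $p_\phi^*\omega_{f,g}$ has nonzero component $\gamma^f\otimes\gamma^g$ in the $(2m+1,2m+1)$ K\"unneth slot. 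Given such $\omega_{f,g}$, antisymmetrization over the swap of the two glued $\Gamma_{2,2m}$-factors produces a class pairing with $\chi(f'\wedge g')$ as $\langle f,f'\rangle\langle g,g'\rangle-\langle f,g'\rangle\langle g,f'\rangle$; nondegeneracy of this bilinear form on $\ext^2\mathcal S_{2m+2}$ is standard and yields injectivity.

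The main obstacle is producing the dual classes $\omega_{f,g}$, since $H^{4m+2}\big(\Out(F_{2m+3})\big)$ is completely inaccessible by direct computation in this range (the smallest nontrivial case is $m=10$, in codimension one from the \vcd\ of $\Out(F_{23})$). A structural route I would pursue is Kontsevich's identification (Section~\ref{sec:HairyLie}) of $H^*(\lplus)$ with an assembly of hairy graph cohomology groups: under this identification $\chi$ becomes a structure map in the modular operad $H_*(\Gamma_{n,s})$ of Remark~\ref{rem:modular}, and the cusp form summand of $H^*(\Gamma_{2,2m})$ maps to a specific piece of $H^*(\lplus)$. It would suffice to show that the image of $\ext^2\mathcal S_{2m+2}$ is already injective inside $H_*(\lplus)$, which reduces the problem to constructing an explicit Kontsevich-type cocycle built bilinearly from pairs of cusp forms. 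A further technical difficulty, and the reason the parabolic restriction technique used for the Eisenstein classes in Section~\ref{subsec:eisenstein} does not transplant to this setting, is that cusp forms by definition vanish at the cusps, so the detection mechanism has to come from period integrals paired against modular symbols (equivalently, from the symplectic Lie algebra structure) rather than from evaluation at infinity; overcoming this is essentially the reason the statement is offered as a conjecture rather than a theorem.
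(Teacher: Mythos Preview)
The statement in question is a \emph{conjecture}, not a theorem: the paper offers no proof, and in fact says explicitly that the first potentially nontrivial case lies in $H_{42}\big(\Out(F_{23})\big)$, that ``we do not know how to show that the image is nontrivial,'' and that it is ``beyond the reach of computer calculations.'' So there is no proof in the paper to compare against.

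Your proposal is not a proof either, and to your credit you say so yourself in the last sentence. What you have written is an outline of a detection strategy: pull back via $p_\phi^*$ and try to find dual classes $\omega_{f,g}\in H^{4m+2}\big(\Out(F_{2m+3})\big)$ whose K\"unneth components hit $\gamma^f\otimes\gamma^g$. The genuine gap is precisely the one you name: constructing the $\omega_{f,g}$. Everything else in your sketch (the K\"unneth pairing, the antisymmetrization, the nondegeneracy of the resulting form on $\ext^2\mathcal S_{2m+2}$) is routine bookkeeping that would follow \emph{if} such classes existed, but their existence is the entire content of the conjecture. Your observation that the parabolic restriction trick from Section~\ref{subsec:eisenstein} cannot work here because cusp forms vanish at the cusp is correct and is exactly why the paper leaves this open. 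The suggestion to route through Kontsevich's identification with $H^*(\lplus)$ does not reduce the difficulty: it merely transports the unknown from $H^*\big(\Out(F_{2m+3})\big)$ to an equally inaccessible piece of the symplectic derivation Lie algebra cohomology.

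In short, there is nothing to grade against the paper's proof because there is none; your write-up is an honest reconnaissance of the problem rather than a proof attempt, and it correctly locates the obstruction.
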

This conjecture appeared as a question in~\cite{CKV1}.
It seems unlikely that $\chi$ is injective on ${\textstyle\ext}^2 \mathcal{X}_{2m,i}$ for $i$ close to $m$, but injectivity is still plausible for small values of $i$, so we ask the following question:
\begin{question}
\label{question:C}
For  which  $i$  is the restriction of $\chi$ to the term $\ext^2 \mathcal{X}_{2m,i}$ injective?
\end{question}

We remark that  Conjecture~\ref{conj:C} would contradict a conjecture made by Church, Farb, and Putman~\cite[Conjecture~12]{CFP} concerning a stability property of the groups $H_{2n-3-k}\big(\Out(F_n)\big)$ for fixed $k$ as $n \to \infty$.

\subsection{Odd-dimensional classes}
In Section~\ref{subsec:odd} we constructed  maps from the space $\mathcal{M}_{2k+2}$ of modular forms of weight $2k+2$ to $H_{4k+3}\big(\Out(F_{s+2})\big)$ for all $k$ and $s$.  For small $s$ this map is trivial because the homology dimension is above the \vcd\ and for large $s$ the map must be trivial by homology stability, but for $s=2k+2$ we conjecture that it is highly nontrivial:

\begin{conjecture}
\label{conj:CC}
The map $\mathcal{M}_{2k+2}\to H_{4k+3}\big(\Out(F_{2k+4})\big)$ constructed in Section~\ref{subsec:odd} is injective for all $k$.
\end{conjecture}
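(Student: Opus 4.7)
The plan is to detect each nonzero $f \in \mathcal{M}_{2k+2}$ via cohomology classes constructed using the modular cooperad structure on $\{H^*(\Gamma_{n,s})\}$ noted in Remark~\ref{rem:modular}. Concretely, I would first identify $\chi(f)$ precisely: it is the image under the assembly map $A_\phi$ of $\alpha' \otimes c_f$, where $\alpha'$ generates $H_{2k}(\Gamma_{1,2k+2}) = P_{(2,1^{2k})}$ (Proposition~\ref{prop:rankone}) and $c_f \in H_{2k+3}(\Gamma_{2,2k+2})$ is the class corresponding to $f$ in the summand $\mathcal{M}_{2k+2} \otimes P_{(2,1^{2k})}$ of $W_{2k+2}$ (Lemma~\ref{thm:GL}). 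Because the coinvariants $\bigl(P_{(2,1^{2k})} \otimes P_{(2,1^{2k})}\bigr)_{\SG_{2k+2}}$ are one-dimensional, $\chi$ is linear and manifestly well-defined in the coefficient $f$.

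The next step is to produce a dual cohomology class in $H^{4k+3}\bigl(\Gamma_{2k+4,0}\bigr)$ for each element of $\mathcal{M}_{2k+2}^*$. Using the cogluing dual to $A_\phi$, one obtains an $\SG_{2k+2}$-equivariant map $H^{4k+3}\bigl(\Gamma_{2k+4,0}\bigr) \to H^{2k}(\Gamma_{1,2k+2}) \otimes H^{2k+3}(\Gamma_{2,2k+2})$. I would aim to show that this map is surjective onto the $P_{(2,1^{2k})} \otimes \bigl(\mathcal{M}_{2k+2} \otimes P_{(2,1^{2k})}\bigr)$ summand, so that the pairing $\langle \cdot, \chi(f) \rangle$ recovers, up to a nonzero scalar, the natural evaluation pairing $\mathcal{M}_{2k+2}^* \otimes \mathcal{M}_{2k+2} \to \F$, which is nondegenerate.

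The hardest step is establishing this surjectivity onto the modular-form summand. The natural attempt, via the Leray-Serre spectral sequence~\eqref{eq:LS} for $\Gamma_{2k+4,0}$ with coefficients pulled back along the gluing, is obstructed by the fact that $H^{4k+3}$ sits four dimensions below the $\vcd$ of $\Gamma_{2k+4,0}$, so higher-page differentials could in principle kill the desired classes. Two alternative routes seem promising. First, geometric realization: extend the construction of Section~\ref{subsec:geoEisenstein} for the Eisenstein classes to arbitrary modular forms by using Eichler-Shimura period integrals and tropical modular symbols to produce families of graphs parametrized by closed orientable manifolds whose fundamental classes detect each $f \in \mathcal{M}_{2k+2}$. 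Second, reduction to the Lie algebra $\lplus$ of symplectic derivations via Section~\ref{sec:HairyLie}: the known injection of modular forms into $H_*(\lplus)$ due to Kontsevich and Morita, combined with the embedding $H_*(\lplus_V) \hookrightarrow \bigoplus_{n,s} H^{2n+s-2-*}(\Gamma_{n,s})\otimes_{\SG_s} \wV{V}{s}$ from Theorem~\ref{thm:oldthm}, should yield the required statement for $\Out(F_{2k+4})$ after identifying the rank-two, codimension-four piece with the image of $\chi$. Both routes require delicate handling of cusp forms, which vanish under the parabolic restriction discussed in Section~\ref{sec:modular} and so are invisible to the simplest detection methods.
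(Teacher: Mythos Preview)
This statement is a \emph{conjecture} in the paper, not a theorem; the paper offers no proof. So your task here is not to compare against an existing argument but to ask whether your proposal actually establishes something the authors regard as open.

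It does not. The key step you identify --- showing that the dual map $H^{4k+3}(\Gamma_{2k+4,0}) \to H^{2k}(\Gamma_{1,2k+2}) \otimes H^{2k+3}(\Gamma_{2,2k+2})$ surjects onto the $\mathcal{M}_{2k+2}$-summand --- is exactly equivalent to the injectivity of $\chi$ you are trying to prove. Dualizing the assembly map and asking for surjectivity is just restating the problem, not reducing it. Your spectral-sequence remark acknowledges this obstruction but understates it (incidentally, the class sits two dimensions below the $\vcd$, not four: $\vcd(\Gamma_{2k+4,0}) = 4k+5$).

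Neither of your alternative routes closes the gap. The geometric construction of Section~\ref{subsec:geoEisenstein} produces manifolds only for the Eisenstein part of $\mathcal{M}_{2k+2}$ via the parabolic subgroup $P$; cusp forms are annihilated by restriction to $P$ (Section~\ref{sec:modular}), so no analogous construction along those lines can detect them, and ``tropical modular symbols'' is not a known technique that fills this hole. The Lie-algebra route is also circular: the embedding in Theorem~\ref{thm:oldthm} goes the wrong way for your purposes (it embeds $H_*(\lplus_V)$ into hairy graph homology, not conversely), and there is no established theorem injecting $\mathcal{M}_{2k+2}$ into $H_*(\lplus)$ in a form that would descend to $\Out(F_{2k+4})$. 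Indeed, as the paper notes, Conjecture~\ref{conj:CC} contradicts a conjecture of Church, Farb, and Putman, so a proof would have substantial consequences and is not expected to be routine.
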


This too is incompatible with the conjecture of Church, Farb, and Putman.  Since the \vcd\ of $\Out(F_{2k+4})$ is $4k+5$,  these classes lie in codimension 2, and Conjecture~\ref{conj:CC} implies that the dimension of $ H_{\vcd-2}\big(\Out(F_{n})\big)$ grows at least linearly with $n$ when $n$ is even.

\subsection{Vanishing under stabilization}
For $s>0$ there are two natural ways to stabilize $\Gamma_{n,s}$, by increasing $n$ or by increasing $s$, as described in Section~\ref{sec:vanish}.  Here we consider the stabilization increasing $n$.

\begin{conjecture}
\label{conj:D}
If $\phi$ is any gluing, then all positive-dimensional classes in the image of the assembly map $A_\phi$ vanish after a single stabilization with respect to $n$.
\end{conjecture}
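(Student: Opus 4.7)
The plan is to combine associativity of assembly maps with a geometric ``loop-sliding'' argument generalizing the stabilization proof for the Morita class carried out in Section~\ref{sec:geometric}.

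First I would use associativity to rewrite the composition $\mathrm{stab} \circ A_\phi$ as a single assembly map with one extra factor. Concretely, if $\phi$ glues graphs $X_{n_1,s_1},\dots,X_{n_k,s_k}$ into $X_{n,s}$ and the stabilization attaches $X_{1,2}$ at a leaf $\ell$ of $X_{n,s}$ coming from an unglued leaf of some $X_{n_i,s_i}$, then there is a gluing pattern $\phi'$ on $X_{n_1,s_1},\dots,X_{n_k,s_k}$ together with an extra copy of $X_{1,2}$ such that
\[
\mathrm{stab}\big(A_\phi(\alpha_1 \otimes \cdots \otimes \alpha_k)\big) \;=\; A_{\phi'}\big(\alpha_1 \otimes \cdots \otimes \alpha_k \otimes \iota\big),
\]
where $\iota\in H_0(\Gamma_{1,2})$ is the generator. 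This reduces Conjecture~\ref{conj:D} to showing that any assembly class whose graph contains a distinguished $X_{1,2}$ subgraph contributed with the class $\iota$ must be trivial whenever the total homological degree is positive.

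Next I would represent such an assembly class by an explicit family of graphs in the rational model $Q_{n+1,s}$, parametrized by the product of the spaces carrying the individual classes $\alpha_j$. Inside this family the $X_{1,2}$ factor appears as a small loop $b$ attached at a single vertex $v$ of the larger assembled graph. The key observation is that the attaching point of $b$ can be slid freely around adjacent edges without changing its homotopy class, introducing an extra $S^1$ parameter. Combining this slide with the existing degrees of freedom coming from the other $\alpha_j$, for instance sliding another leaf across $b$ as in Figure~\ref{fig:2parafam}, one assembles a higher-dimensional parameter space that caps off the original one. The prototype for this construction is precisely the passage from $T^{2k}$ to $T^{2k-1}\times S_{1,1}$ used to prove vanishing of $\alpha_k$ under stabilization, with the commutator relation $[e_{ij},e_{jk}]=e_{ik}$ in $\Aut(F_n)$ providing the punctured-torus extension.

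The main obstacle will be to establish such a bounding extension in full generality, independently of $\phi$ and of the particular classes $\alpha_j$. In the Morita case the loop $b$ interacts with a single rank-one component; in general its slide must thread through an arbitrarily complicated assembled graph and still produce an actual boundary, not merely an extension of the parameter space. A natural way to organize this is operadically: by Remark~\ref{rem:modular} the groups $H_*(\Gamma_{n,s})$ form a modular operad in which stabilization is the composition with $\iota\in H_0(\Gamma_{1,2})$, so the desired triviality becomes the statement that this particular operadic composition is null-homotopic on chains. I would attempt to construct the nullhomotopy explicitly using chain-level loop slides modelled on Section~\ref{sec:geometric}, and use the representation-theoretic obstructions behind Proposition~\ref{prop:OneStabilization} to rule out residual classes once the effect on $\SG_s$-module structure of inserting an $X_{1,2}$ factor is understood in higher ranks.
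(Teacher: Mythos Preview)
This statement is a \emph{conjecture}, not a theorem: the paper does not prove it and presents it in Section~\ref{sec:conj} as an open question. What the paper does establish is the special case where the stabilization is applied at a rank-one factor (Proposition~\ref{prop:OneStabilization} and its geometric companion in Section~\ref{sec:geometric}), and it explicitly flags the general statement as unproved. So there is no ``paper's own proof'' to compare against; your proposal should be read as an attack on an open problem, not as a reconstruction of a known argument.

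On the substance of your plan: the associativity reduction in your first paragraph is fine and is exactly how the paper treats the special case. The gap is in the second and third paragraphs. The loop-sliding/commutator argument of Section~\ref{sec:geometric} works because the class being stabilized lives on a \emph{torus}: one of its $S^1$ coordinates is the slide of a leaf around a rank-one circle, and the punctured-torus extension trades that slide for two others across the new loop $b$. For a general assembly with a higher-rank factor $\Gamma_{n_i,s_i}$, the class $\alpha_i$ need not be represented by any torus or by any family in which a single leaf slides over a circle, so there is no obvious coordinate to cap off. You acknowledge this (``the main obstacle\ldots''), but the proposal does not supply a mechanism to overcome it: saying that the desired vanishing ``becomes the statement that this operadic composition is null-homotopic on chains'' is a restatement of the conjecture, not a step toward proving it. Likewise, the representation-theoretic input behind Proposition~\ref{prop:OneStabilization} depends on the explicit $\SG_s$-module structure of $H_*(\Gamma_{2,s})$ from Theorem~\ref{thm:ranktwo}; for $n\ge 3$ no such description is available, so invoking those ``obstructions'' in higher rank is not currently feasible. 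In short, your outline correctly identifies the shape of the problem but does not contain the new idea that would be needed to settle the conjecture.
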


The condition of positive dimension is of course necessary to exclude the classes in $H_0(\Gamma_{n,s})$ which clearly do not vanish after stabilization.
In Section~\ref{sec:vanish} we showed that the conjecture is true for classes in the image of an assembly map where stabilization  is done using a rank $1$ factor.

We   point out that Conjectures~\ref{conj:B} and~\ref{conj:D} imply the following for classes in dimensions below the \vcd:
\begin{conjecture}
\label{conj:E}
All classes in $H_k(\Gamma_{n,s})$ for $0<k<2n+s-3$ vanish after a single stabilization with respect to $n$.
\end{conjecture}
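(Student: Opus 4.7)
The plan is a direct deduction of Conjecture~\ref{conj:E} from Conjectures~\ref{conj:B} and~\ref{conj:D}; once those two inputs are available, the argument is essentially a one-liner. Let $c \in H_k(\Gamma_{n,s})$ with $0 < k < 2n+s-3$, and let $\sigma\colon H_k(\Gamma_{n,s}) \to H_k(\Gamma_{n+1,s})$ denote the stabilization map obtained by gluing a leaf of $\Gr_{1,2}$ to some leaf of $\Gr_{n,s}$. The map $\sigma$ is itself an assembly map, and any two choices of stabilizing leaf are related by the $\SG_s$-action on source and target, so it suffices to show $\sigma(c)=0$ for one choice.

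By Conjecture~\ref{conj:B}, applicable precisely because $k$ lies strictly below the \vcd, we may write $c = \sum_i c_i$ where each $c_i$ lies in the image of an assembly map $A_{\phi_i}$ for a gluing $\phi_i$ producing $\Gr_{n,s}$. Each $c_i$ is positive-dimensional because $k>0$, so Conjecture~\ref{conj:D} applies and gives $\sigma(c_i)=0$, and consequently $\sigma(c) = \sum_i \sigma(c_i) = 0$. This is Conjecture~\ref{conj:E}.

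The content of the argument is therefore concentrated in the two input conjectures, and the main obstacle is proving them rather than carrying out this deduction. Conjecture~\ref{conj:B} demands a structural description of $H_*(\Gamma_{n,s})$ far more refined than anything currently available for $n\geq 3$, where even the calculation of these groups is out of reach; the evidence comes largely from the verifications in rank at most two in Sections~\ref{subsec:Morita}--\ref{subsec:classes}. Conjecture~\ref{conj:D} would extend the vanishing results proved in Section~\ref{sec:geometric} and in Proposition~\ref{prop:OneStabilization}, which cover only the case of a rank-one factor; pushing the geometric $[e_{ij},e_{jk}]=e_{ik}$ argument, or an analogous algebraic representation-theoretic argument, through for assembly maps involving higher-rank factors seems to be the genuine difficulty.
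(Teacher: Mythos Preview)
Your deduction is correct and matches the paper exactly: the paper does not prove this conjecture but simply observes, in the sentence introducing it, that Conjectures~\ref{conj:B} and~\ref{conj:D} together imply Conjecture~\ref{conj:E}. Your write-up spells out this one-line implication in more detail than the paper does, and your closing paragraph correctly locates the genuine difficulty in the two input conjectures.
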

Since the \vcd\ of $\Gamma_{n,s}$ increases with $n$, this conjecture implies that two stabilizations kill {\em all} homology classes.

\section{Tables of results}
\label{sec:tables}
In this section we write out tables of the cohomology of $\Gamma_{n,s}$ for small values of $s$.  The space above the diagonal in each table is left blank since it represents terms above  the virtual cohomological dimension, where the cohomology must vanish.

\begingroup
\fontsize{10pt}{12pt}\selectfont
\begin{table}[H]
$$
\begin{array}{c|cccccccc}
&H^0&H^1&H^2&H^3&H^4&H^5&H^6&H^7\\
\hline
\Gamma_{1,0} &\F&&&&&&&\\
\Gamma_{1,1} &P_{(1)}&&&&&&&\\
\Gamma_{1,2} &P_{(2)}&0&&&&&&\\
\Gamma_{1,3} &P_{(3)}&0&P_{(1^3)}&&&&&\\
\Gamma_{1,4}&P_{(4)}&0&P_{(2,1^2)}&0&&&&\\
\Gamma_{1,5}&P_{(5)}&0&P_{(3,1^2)}  &0&P_{(1^5)}&&&\\
\Gamma_{1,6}&P_{(6)}&0&P_{(4,1^2)}&0&P_{(2,1^4)}&0&&\\
\Gamma_{1,7}&P_{(7)}&0&P_{(5,1^2)}&0&P_{(3,1^4)}&0&P_{(1^7)}&\\
\Gamma_{1,8}&P_{(8)}&0&P_{(6,1^2)}&0&P_{(4,1^4)}&0&P_{(2,1^6)}&0
\end{array}
$$
\caption{$H^i(\Gamma_{1,s};\F)$ for $s\leq 8$}
\label{tbl:n-one}
\end{table}

\bigskip

\begin{table}[H]
$$
\begin{array}{c|ccrcrccc}
&H^0&H^1&H^2&H^3&H^4&H^5&H^6&H^7\\
\hline
\Gamma_{1,0} & 1 &&&&&&&\\
\Gamma_{1,1} & 1 &&&&&&&\\
\Gamma_{1,2} & 1 & 0 &&&&&&\\
\Gamma_{1,3} & 1 & 0 & 1  &&&&&\\
\Gamma_{1,4} & 1 & 0 & 3  & 0 &&&&\\
\Gamma_{1,5} & 1 & 0 & 6  & 0 & 1  &&&\\
\Gamma_{1,6} & 1 & 0 & 10 & 0 & 5  & 0 &&\\
\Gamma_{1,7} & 1 & 0 & 15 & 0 & 15 & 0 & 1 &\\
\Gamma_{1,8} & 1 & 0 & 21 & 0 & 35 & 0 & 7 & 0
\end{array}
$$
\caption{Dimensions of $H^i(\Gamma_{1,s};\F)$ for $s\leq 8$}
\label{tbl:n-onedim}
\end{table}

\vfill\eject

\begin{table}[H]
{\renewcommand{\arraystretch}{1.3}
\begin{tabular}{c|llllll}
    &\multicolumn{1}{c}{$H^0$} & \multicolumn{1}{c}{$H^1$} &\multicolumn{1}{c}{$H^2$}&
    \multicolumn{1}{c}{$H^3$}&\multicolumn{1}{c}{$H^4$} & \multicolumn{1}{c}{$H^5$}\\
\hline
$\Gamma_{2,0}$&$\F$&$0$&&&&\\%
$\Gamma_{2,1}$&$P_{(1)}$&$0$&$0$&&&\\%
$\Gamma_{2,2}$&$P_{(2)}$&$0$&$0$&$0$&&\\%
$\Gamma_{2,3}$&$P_{(3)}$&$0$&$0$&$0$&$0$&\\%
$\Gamma_{2,4}$&$P_{(4)}$&$0$&$0$&$0$&$P_{(2^2)}$&$P_{(2,1^2)}$\\%
$\Gamma_{2,5}$&$P_{(5)}$&$0$&$0$&$0$&$P_{(3,2)}\oplus P_{(2^2,1)}$&$P_{(3,1^2)} \oplus P_{(2^2,1)}\oplus P_{(2,1^3)}$\\%
$\Gamma_{2,6}$&$P_{(6)}$&$0$&$0$&$0$&$P_{(4,2)}\oplus P_{(3,2,1)} \oplus P_{(2^3)}$&$P_{(4,1^2)} \oplus P_{(3,2,1)}\oplus P_{(3,1^3)} \oplus P_{(2^2,1^2)}$\\%
$\Gamma_{2,7}$&$P_{(7)}$&$0$&$0$&$0$&$P_{(5,2)}\oplus P_{(4,2,1)} \oplus P_{(3,2^2)}$&$P_{(5,1^2)} \oplus P_{(4,2,1)}\oplus P_{(4,1^3)} \oplus P_{(3,2,1^2)}$\\%
$\Gamma_{2,8}$&$P_{(8)}$&$0$&$0$&$0$&$P_{(6,2)}\oplus P_{(5,2,1)} \oplus P_{(4,2^2)}$&$P_{(6,1^2)} \oplus P_{(5,2,1)}\oplus P_{(5,1^3)} \oplus P_{(4,2,1^2)}$\\%
$\Gamma_{2,9}$&$P_{(9)}$&$0$&$0$&$0$&$P_{(7,2)}\oplus P_{(6,2,1)} \oplus P_{(5,2^2)}$&$P_{(7,1^2)} \oplus P_{(6,2,1)}\oplus P_{(6,1^3)} \oplus P_{(5,2,1^2)}$\\%
$\Gamma_{2,10}$&$P_{(10)}$&$0$&$0$&$0$&$P_{(8,2)}\oplus P_{(7,2,1)} \oplus P_{(6,2^2)}$&$P_{(8,1^2)} \oplus P_{(7,2,1)}\oplus P_{(7,1^3)} \oplus P_{(6,2,1^2)}$\\%
\end{tabular}\hfill
}

\vspace{1em}
{\renewcommand{\arraystretch}{1.3}
\begin{tabular}{c|llllll}
    &\multicolumn{1}{c}{$H^6$} & \multicolumn{1}{c}{$H^7$} &\multicolumn{1}{c}{$H^8$}&
    \multicolumn{1}{c}{$H^9$}&\multicolumn{1}{c}{$H^{10} $} \\
\hline
$\Gamma_{2,5}$&$0$&&&\\
$\Gamma_{2,6}$&$0$&$P_{(2,1^4)}$&&&\\
$\Gamma_{2,7}$&$0$&
$\begin{array}[t]{@{}l}P_{(3,1^4)} \oplus P_{(2^2,1^3)}  \\ \; \oplus \, P_{(2,1^5)} \end{array}$
&$0$&&\\
$\Gamma_{2,8}$&&
$\begin{array}[t]{@{}l}P_{(4,1^4)} \oplus P_{(3,2,1^3)}  \\ \; \oplus \, P_{(3,1^5)} \oplus P_{(2^2,1^4)}\end{array}$
&$P_{(2^4)}$&$P_{(2,1^6)} \oplus P_{(2^3,1^2)}$&\\
$\Gamma_{2,9}$&$0$&
$\begin{array}[t]{@{}l}P_{(5,1^4)} \oplus P_{(4,2,1^3)}  \\ \;\; \oplus \,P_{(4,1^5)}\oplus P_{(3,2,1^4)}\end{array}$
&$P_{(3,2^3)} \oplus P_{(2^4,1)}$&
$\begin{array}[t]{@{}l} P_{(3,1^6)} \oplus P_{(2^2,1^5)} \oplus P_{(2,1^7)}  \\  \;\; \oplus \,P_{(3,2^2,1^2)} \oplus P_{(2^4,1)}\oplus P_{(2^3,1^3)}\end{array}$&$0$\\
$\Gamma_{2,10}$&$0$&
$\begin{array}[t]{@{}l}P_{(6,1^4)} \oplus P_{(5,2,1^3)}\\ \;\;  \oplus \, P_{(5,1^5)}\oplus P_{(4,2,1^4)}\end{array}$&
$\begin{array}[t]{@{}l} P_{(4,2^3)} \oplus\, P_{(3,2^3,1)} \\\:\:  \oplus \, P_{(2^5)} \end{array} $&
$\begin{array}[t]{@{}l} P_{(4,1^6)} \oplus P_{(3,2,1^5)} \oplus P_{(3,1^7)}  \\ \;\; \oplus \,P_{(2^2,1^6)} \oplus P_{(4,2^2,1^2)} \oplus P_{(2^4,1^2)} \\ \:\: \oplus \, P_{(3,2^2,1^3)} \oplus P_{(3,2^3,1)} \end{array}$&$0$\\
\end{tabular}\hfill
}
\medskip
\caption{$H^i(\Gamma_{2,s};\F)$ for $i,s\leq 10$}
\label{tbl:n-two}
\end{table}

\begin{table}[H]
$$
\begin{array}{c|ccccrrcrrrc}
    &\multicolumn{1}{r}{H^0} & \multicolumn{1}{r}{H^1} &\multicolumn{1}{r}{H^2}&
    \multicolumn{1}{r}{H^3}&\multicolumn{1}{r}{H^4} & \multicolumn{1}{r}{H^5} &
    \multicolumn{1}{r}{H^6}&\multicolumn{1}{r}{H^7} & \multicolumn{1}{r}{H^8} &
    \multicolumn{1}{r}{H^9}&\multicolumn{1}{r}{H^{10}} \\
\hline
\Gamma_{2,0} & 1 & 0 &&&&&&&&&\\
\Gamma_{2,1} & 1 & 0 & 0 &&&&&&&&\\%
\Gamma_{2,2} & 1 & 0 & 0 & 0 &&&&&&&\\%
\Gamma_{2,3} & 1 & 0 & 0 & 0 & 0   &&&&&&\\%
\Gamma_{2,4} & 1 & 0 & 0 & 0 & 2   & 3   &&&&&\\%
\Gamma_{2,5} & 1 & 0 & 0 & 0 & 10  & 15  & 0 &&&&\\%
\Gamma_{2,6} & 1 & 0 & 0 & 0 & 30  & 45  & 0 & 5  &&&\\%
\Gamma_{2,7} & 1 & 0 & 0 & 0 & 70  & 105 & 0 & 35   & 0   &&\\%
\Gamma_{2,8} & 1 & 0 & 0 & 0 & 140 & 210 & 0 & 140  & 14  & 35   &\\%
\Gamma_{2,9} & 1 & 0 & 0 & 0 & 252 & 378 & 0 & 420  & 126 & 315  & 0\\%
\Gamma_{2,10}& 1 & 0 & 0 & 0 & 420 & 630 & 0 & 1050 & 630 & 1575 & 0\\
\end{array}\hfill
$$
\caption{Dimensions of $H^i(\Gamma_{2,s};\F)$ for $i,s\leq 10$}
\label{tbl:n-twodim}
\end{table}
\endgroup

\end{document}